\documentclass[USenglish, a4paper, oneside, 10pt]{article}

\input{TeX/Preamble.tex}
\title{%
	Bregman level proximal subdifferentials and new characterizations of Bregman proximal operators%
	\thanks{%
		Z. Wang acknowledges the support of Xianfu Wang's NSERC Discovery Grants and the Mitacs Gloablink Award.
		The work of A. Themelis was supported by the JSPS KAKENHI grants number JP21K17710 and JP24K20737.
	}%
}
\author{%
	Ziyuan Wang\thanks{%
		Department of Mathematics, Irving K. Barber Faculty of Science,
		University of British Columbia, Kelowna, BC V1V 1V7, Canada.
		{\it E-mail:} {\sf ziyuan.wang96@outlook.com}
	}\and
	Andreas Themelis\thanks{%
		Faculty of Information Science and Electrical Engineering (ISEE), Kyushu University,
		744 Motooka, Nishi-ku, Fukuoka 819-0395, Japan.
		{\it E-mail:} {\sf andreas.themelis@ees.kyushu-u.ac.jp}
	}%
}
\date{}


\begin{document}

	\maketitle

	\begin{abstract}
		Classic subdifferentials in variational analysis may fail to fully represent the Bregman proximal operator in the absence of convexity.

In this paper, we fill this gap by introducing the left and right \emph{Bregman level proximal subdifferentials} and investigate them systematically.
Every Bregman proximal operator turns out to be the resolvent of a Bregman level proximal subdifferential under a standard range assumption, even without convexity.
Aided by this pleasant feature, we establish new correspondences among useful properties of the Bregman proximal operator, the underlying function, and the (left) Bregman level proximal subdifferential, generalizing classical equivalences in the Euclidean case.
Unlike the classical setting, asymmetry and duality gap emerge as natural consequences of the Bregman distance.
Along the way, we improve results by Kan and Song and by Wang and Bauschke on Bregman proximal operators.
We also characterize the existence and single-valuedness of the Bregman level proximal subdifferential, investigate coincidence results, and make an interesting connection to relative smoothness.
Abundant examples are provided to justify the necessity of our assumptions.
We also introduce \emph{anisotropic firm nonexpansiveness}, a new notion that is complementary to \emph{Bregman} firm nonexpansiveness and is shown to characterize relative smooth convex functions and convex envelopes via properties of gradient and proximal operators.

	\end{abstract}

	\par\noindent{\bf Keywords.}
		Bregman distance,
		Bregman proximal mapping and Moreau envelope,
		set-valued analysis,
		generalized resolvent,
		level-proximal subdifferential,
		Bregman and anisotropic firm nonexpansiveness.

	\par\noindent{\bf AMS subject classifications.}
		49J52, 
		49J53, 
		49M27, 
		47H04, 
		26B25. 

	\phantomsection
	{%
		\renewcommand{\contentsname}{}%
		\footnotesize
		\pdfbookmark[1]{Contents}{toc}%
		\tableofcontents
	}%

	\section{Introduction}
		The proximal operator of a function \(\func{f}{\R^n}{\Rinf\coloneqq[-\infty,\infty]}\) for \(\lambda>0\) is
\[
	\Eprox_{\lambda f}:
	\begin{array}[t]{@{}r @{}c@{} l@{}}
		\R^n & {}\rightrightarrows{} & \R^n \\
		x & {}\mapsto{} & \displaystyle\argmin_{w\in\R^n}\set{f(w)+\tfrac{1}{2\lambda}\norm{x-w}^2}.
	\end{array}
\]
When \(f\) is convex, it is well known that the proximal operator is the resolvent of the Mordukhovich (limiting) subdifferential \(\ffunc{\lsubdiff f}{\R^n}{\R^n}\) of \(f\), that is,
\begin{equation}\label{eq:cvs resolvent}
	\Eprox_{\lambda f}=\left(\id+\lambda \lsubdiff f\right)^{-1},
\end{equation}
leading to beautiful correspondences among the properties of \(\Eprox_{\lambda f}\), the subdifferential \(\lsubdiff f\) and the function \(f\).
For instance, proximal operators of convex functions are resolvents of the associated subdifferential operators and therefore firmly nonexpansive---a fundamental property that underpins the convergence of various first-order methods in convex optimization; see \cite{bauschke2017convex}.
However, the representation \eqref{eq:cvs resolvent} fails in the absence of convexity with only \(\Eprox_{\lambda f}\subseteq(\id+\lambda\lsubdiff f)^{-1}\) being valid in full generality.
Rockafellar resolved this by introducing the \emph{level proximal subdifferential},\footnote{%
	We note that the level proximal subdifferential first appeared informally in \cite{rockafellar2021characterizing}, but no terminology was given there.
	Hence we adopt the convention used in \cite{wang2023every} for convenience.
}
denoted by \(\Epsubdiff f\), which is a refinement of the proximal subdifferential \(\Epsubdiff[]f\); see \cite[Eq. (2.13)]{rockafellar2021characterizing} or \cite[Thm. 3.7]{wang2023every}.
Regardless of convexity, proximal operators are \textit{always} resolvents of level proximal subdifferentials:
\begin{equation}\label{eq:level prox}
	\Eprox_{\lambda f}=\left(\id+\lambda \Epsubdiff f\right)^{-1},
\end{equation}
thereby various properties of the proximal operators in the absence of convexity can be characterized through the representation \eqref{eq:level prox}; see \cite{luo2024level,wang2023every}.
Assume now that \(\func{\kernel}{\R^n}{\Rinf}\) is proper, lsc, convex, and differentiable on \(\interior\dom\kernel\neq\emptyset\).
The Bregman distance induced by \(\kernel\) is \(\func{\D}{\R^n\times\R^n}{\Rinf}\) defined by
\begin{equation}
	\D(x,y)
	=
	\begin{cases}
		\kernel(x)-\kernel(y)-\innprod{\nabla\kernel(y)}{x-y} & \text{if } y\in\interior\dom\kernel
	\\
		\infty & \text{otherwise.}
	\end{cases}
\end{equation}
Since the Bregman distance is typically asymmetric, it gives rise to the left and the right Bregman proximal operators \(\ffunc{\prox_{\lambda f}}{\interior\dom\kernel}{\dom\kernel}\) and \(\ffunc{\prox*_{\lambda g}}{\dom\kernel}{\interior\dom\kernel}\) defined respectively by
\begin{align*}
	\prox_{\lambda f}(\bar y)
\coloneqq{} &
	\argmin_{x\in\dom\kernel}\set{f(x)+\tfrac{1}{\lambda}\D(x,\bar y)},
\\
	\prox*_{\lambda g}(\bar x)
\coloneqq{} &
	\argmin_{y\in\interior\dom\kernel}\set{g(y)+\tfrac{1}{\lambda}\D(\bar x,y)},
\end{align*}
for functions \(\func{f}{\dom\kernel}{\Rinf}\) and \(\func{g}{\interior\dom\kernel}{\Rinf}\), and points \(\bar y\in\interior\dom\kernel\) and \(\bar x\in\dom\kernel\).
In particular, \(\prox_{\lambda f}\) and \(\prox*_{\lambda g}\) reduce to \(\Eprox_{\lambda f}\) and \(\Eprox_{\lambda g}\) when \(\kernel=\tfrac{1}{2}\norm{{}\cdot{}}^2\).
The Bregman proximal operators are of central importance in optimization and variational analysis; see for instance the Chebyshev problem in the sense of the Bregman distance \cite{bauschke2009bregman}, various Bregman iterative methods \cite{bauschke2003iterating,bauschke2009bregman}, the celebrated mirror descent algorithms \cite{beck2003mirror,nemirovskij1983problem}, and Bregman algorithms for problems beyond traditional Lipschitz smoothness \cite{bauschke2017descent,bolte2018first,ahookhosh2021bregman,lu2018relatively,wang2024mirror,maddison2021dual}, just to name a few.
It is thus paramount to understand the properties of the Bregman proximal operators.
Significant progress has been made in \cite{bauschke2009bregman,kan2012moreau,laude2020bregman,bauschke2018regularizing}.
However, none of these works have addressed the following fundamental question:
\begin{equation}\label{Bregman level prox question}
	\text{\emph{What is the relation between \(\prox_{\lambda f}, \prox*_{\lambda g}\), and subdifferentials}?}
\end{equation}
Akin to the Euclidean case, answering \eqref{Bregman level prox question} shall shed light on how properties of the Bregman proximal operators correspond to those of the underlying function, therefore providing deep insights into these operators and their applications in numerical optimization.
When \(f\) is convex and its domain intersects \(\interior\dom\kernel\), the picture is clear owing to the optimality condition of convex functions, which yields the \emph{warped} resolvent characterization
\begin{equation}\label{eq:cvs warped}
	\prox_{\lambda f}=\left(\nabla\kernel+\lambda\lsubdiff f\right)^{-1}\circ\nabla\kernel
\end{equation}
that generalizes \eqref{eq:cvs resolvent}.
This may however fail in absence of convexity.

\emph{The goal of this paper is to resolve the fundamental question \eqref{Bregman level prox question} by introducing the left and right Bregman level proximal subdifferentials and investigating them systematically.}
Their resolvents \emph{always} coincide with the Bregman proximal operators under a necessary range assumption, regardless of convexity.
Equivalent descriptions of the Bregman level proximal subdifferentials are obtained.
We also investigate the single-valuedness and various coincidence results of the left Bregman level proximal subdifferential, and its connection to relative smoothness.
Correspondences among various properties of the Bregman proximal operator, the Bregman level proximal subdifferential, and the underlying function are established, including new characterizations and improvements of known results,
providing a clear picture about how different notions of a function, a subdifferential, and the warped resolvent of a subdifferential are related; see \cref{fig:diagram} for illustrations.
In particular, improvements of known results include adding new properties to the list of equivalences of relative weak convexity established by Kan and Song \cite[Thm. 4.2]{kan2012moreau} and proving that \(\prox_{\lambda f}\circ\nabla\kernel*\) being convex-valued implies relative weak convexity, while only the converse direction was proved by Wang and Bauschke \cite[Prop. 2.10(ii)]{wang2022bregman}.

We also introduce a notion of firm nonexpansiveness complementary to the Bregman one \cite{bauschke2003bregman,borwein2011characterization,wang2022bregman}, which we term \emph{anisotropic}, and employ it to characterize the gap between the convexity of a function and that of its envelope in terms of properties of its proximal mapping.
Furthermore, we show that relative smoothness of a convex function is equivalent to anisotropic firm nonexpansiveness of its gradient, under a mere Legendreness assumption on \(\kernel\), resolving an open question posed in \cite{themelis2026natural} as a simple byproduct.

\begin{figure}[hp]
	\centering
	\small
	\begin{subfigure}{\linewidth}%
		\centering
		\scalebox{0.85}{%
			\begin{tikzpicture}[diagram]
				\node[mybox, anchor=east] (E) at (0, 0) {
					\textbf{Euclidean setting (\cref{fact:Ehypo})}
				\\[5pt]
					\block{\(f+\lambda^{-1}\j\) is convex}
				\updown{\cite{rockafellar1998variational}}
					\block{\(\Eprox_{\lambda f}\) is maximally monotone}
				\updown{\cite{chen2020proximal}}
					\block{\(\Eprox_{\lambda f}\) is convex-valued}
				\updown{\cite{chen2020proximal}}
					\block{\(\Eprox_{\lambda f}=(\id+\lambda\lsubdiff f)^{-1}\)}
				};

				\node[mybox, anchor=west] (B) at (\blocksep, 0) {
					\textbf{Bregman setting (\cref{thm:hypo})}
				\\[5pt]
					\block{\(f+\lambda^{-1}\kernel\)} is convex
				\updown{\cite{kan2012moreau}}
					\block{\(\prox_{\lambda f}\circ\nabla\kernel*\)} is maximally monotone
				\updown[]{ \(\text{\cite{kan2012moreau}}+\text{\cite{wang2022bregman}}\)}%
					\block{\(\prox_{\lambda f}\)} is convex-valued
				\updown{}
					\block{\(\prox_{\lambda f}=(\nabla\kernel+\lambda\lsubdiff f)^{-1}\circ\nabla\kernel\)}
				};

				\draw[<-, myarrow] (E.east) -- (B.west) node[midway, above]{\(\kernel=j\)};
			\end{tikzpicture}
		}%
		\caption[Characterizations of weak convexity from Fact \ref*{fact:Ehypo} and Theorem \ref*{thm:hypo}]{%
			Characterizations of weak convexity from \cref{fact:Ehypo,thm:hypo}.
			The equivalence between relative weak convexity and maximal monotonicity was proved by Kan and Song \cite[Thm. 4.2]{kan2012moreau}.
			Wang and Bauschke \cite[Prop. 2.10]{wang2022bregman} showed that relative weak convexity implies convex-valued \(\prox_{\lambda f}\).
		}%
		\label{fig:hypo}%
	\end{subfigure}%

	\bigskip
	\begin{subfigure}{\linewidth}%
		\centering
		\scalebox{0.85}{%
			\begin{tikzpicture}[diagram]
				\node[mybox, anchor=south east] (Ef) at (0, 0) {
					\textbf{Euclidean setting (\cref{fact:classic})}
				\\[5pt]
					\block{\(\Epsubdiff f\) is maximally monotone}
				\updown{\cite{luo2024level}}
					\block{\(f\) is convex}
				\updown[\cite{rockafellar2021characterizing}]{\cite{rockafellar1976monotone}}
					\block{\(\norm{x_1-x_2}^2\leq\innprod{x_1-x_2}{y_1-y_2}\)}
				\\[0.5ex]
					\block{(\(\Eprox_{\lambda f}\) is FNE)}
				};

				\node[mybox, anchor=south west] (Bf) at (\blocksep, 0) {
					\textbf{Bregman setting (\cref{thm:DFNE})}
				\\[5pt]
					\block{\(\psubdiff f\) is maximally monotone on \(\interior\X\)}
				\updown{}
					\block{\(f\) is convex}
				\updown[]{\cite{bauschke2003bregman}}
					\block{\(\DD(x_1,x_2)\leq\innprod{x_1-x_2}{\nabla\kernel(y_1)-\nabla\kernel(y_2)}\)}
				\\[0.5ex]
					\block{(\(\prox_{\lambda f}\) is \B-FNE)}
				};

				\node[rotate=-90] (ass:range) at ($(Bf.east)-(0.2,0)$) {under \cref{ass:range}};

				\node[mybox, anchor=north east] (Eenv) at (0, -\blocksep) {
					\textbf{Euclidean setting (\cref{fact:classic})}
				\\[5pt]
					\block{\(\Eenv_\lambda{f}\) is convex}
				\updown{\cite{baillon1977quelques}}
					\block{\(\innprod{x_1-x_2}{y_1-y_2}\leq\norm{y_1-y_2}^2\)}
				\updown{}
					\block{\(\Eprox_{\lambda f}\) is FNE}
				};

				\node[mybox, anchor=north west] (Benv) at (\blocksep, -\blocksep) {
					\textbf{Bregman setting (\cref{thm:envcvx})}
				\\[5pt]
					\block{\(\env{f}\circ\nabla\kernel*\) is convex}
				\updown{}
					\block{\(\innprod{x_1-x_2}{\nabla\kernel(y_1)-\nabla\kernel(y_2)}\leq\DD(y_1,y_2)\)}%
				\updown{}
					\block{\(\prox_{\lambda f}\) is \a-FNE}%
				};

				\draw[myarrow, <- ] (Ef.east) -- ($(Ef.east)+(1,0)$) node[midway, above] {\(\kernel=\j\)};
				\draw[myArrow, <->] (Ef.south) -- (Eenv.north) node[midway, right] {\cite{wang2010chebyshev}};
				\draw[myArrow, <-, degil] ($(Benv.north)+(0.25,0)$) -- ($(Benv.north)+(0.25,1)$) node[midway, right, outer sep=5pt]{\cref{ex:f_not_env}};
				\draw[myArrow, ->, degil] ($(Benv.north)-(0.25,0)$) -- ($(Benv.north)+(-0.25,1)$) node[midway, left, outer sep=5pt]{\cref{ex:env_not_f}};
				\draw[myarrow, <- ] (Eenv.east) -- ($(Eenv.east)+(1,0)$) node[midway, above] {\(\kernel=\j\)};
			\end{tikzpicture}
		}%
		\caption[Characterizations of convexity of f and that of its envelope from Fact \ref*{fact:classic} and Theorems \ref*{thm:envcvx} and \ref*{thm:DFNE}]{%
			Characterizations of convexity of \(f\) and that of its envelope from \cref{fact:classic,thm:envcvx,thm:DFNE}.
			Here, \(x_i=\prox_{\lambda f}(y_i)\) for arbitrary \(y_i\in\interior\dom\kernel\), \(i=1,2\) (with \(\kernel=\j\) in the Euclidean case).
			The lack of equivalence between top- and bottom-right boxes rephrases a duality gap documented in \cite{laude2023dualities} in terms of different firm nonexpansiveness notions; see \cref{def:BFNE,def:aFNE}.
		}%
		\label{fig:cvx}%
	\end{subfigure}%
	\normalsize
	\caption[Equivalences, or lack thereof, for a Legendre and 1-coercive dgf phi and a proper and lsc function f]{%
		Equivalences, or lack thereof, for a Legendre and 1-coercive dgf \(\func{\kernel}{\R^n}{\Rinf}\), \(\lambda>0\), and a proper lsc \(\func{f}{\dom\kernel}{\Rinf}\) (right), compared to the Euclidean case (left).
	}%
	\label{fig:diagram}%
\end{figure}

The rest of the paper is organized as follows.
In \cref{sec:preliminaries} we recall elements from variational analysis, including technical lemmas regarding functions and operators defined on \emph{subsets} of \(\R^n\); this feature will enable the development of a more general and comprehensive theory.
In \cref{sec:psubdiff}, we define the \emph{Bregman level proximal subdifferentials}, investigate their properties, and provide characterizations.
\Cref{sec:global} is devoted to correspondences among various properties of a function, its Bregman proximal operator, and its Bregman level proximal subdifferential operator.
Finally, \cref{sec:conclusion} concludes the paper.
For convenience, a summary of the main notational conventions is provided in \cref{tab:notations}.

\begin{table}[ht]
	\renewcommand{\arraystretch}{1.5}%
	\setlength{\tabcolsep}{3pt}%
	\newlength\colI
	\newlength\colII
	\newlength\colIII
	\settowidth{\colI}{\(\func{\closure h\text{/}\conv h\text{/}\conv* h}{X}{\Rinf}\)}%
	\settowidth{\colII}{regular/limiting/Fenchel (convex) subdiff. of \(h\)}%
	\settowidth{\colIII}{\eqref{eq:rsubdiff}/\eqref{eq:lsubdiff}/\eqref{eq:fsubdiff}}%
	\begin{subtable}{\linewidth}
		\centering
		\begin{tabular}{@{}|c|c|c|@{}}
			\multicolumn{1}{@{}c}{\bf Object}
			&
			\multicolumn{1}{c}{\parbox{5em}{\centering\bf Definition}}
			&
			\multicolumn{1}{c@{}}{\parbox{5em}{\centering\bf Reference}}
		\\\hline
			\parbox{\colI}{\centering\(\X\) / \(\Y\) / \(\X*\) / \(\Y*\)}%
			&
			\parbox{\colII}{\centering\(\dom\kernel\) / \(\interior\dom\kernel\) / \(\dom\kernel*\) / \(\interior\dom\kernel*\)}%
			&
			\parbox{\colIII}{\centering\S\ref{sec:Bprox}}%
		\\\hline
			\parbox{\colI}{\centering\(\ffunc{\prox_{\lambda f}}{\Y}{\X}\)}%
			&
			\parbox{\colII}{\centering left Bregman proximal map of \(f\)}%
			&
			\multirow{2}{*}{\parbox{\colIII}{\centering \cref{def:prox}}}%
		\\
			\(\ffunc{\prox*_\lambda g}{\X}{\Y}\)
			&
			right Bregman proximal map of \(g\)
			&
		\\\hline
			\(\func{\env{f}}{\Y}{\Rinf}\)
			&
			left Bregman--Moreau envelope of \(f\)
			&
			\multirow{2}{*}{\cref{def:env}}
		\\
			\(\func{\env*_\lambda g}{\X}{\Rinf}\)
			&
			right Bregman--Moreau envelope of \(g\)
			&
		\\\hline
			\(\ffunc{\psubdiff f}{\X}{\R^n}\)
			&
			left Bregman level proximal subdiff. of \(f\)
			&
			\multirow{2}{*}{\cref{def:psubdiff}}
		\\
			\(\ffunc{\psubdiff*g}{\Y}{\R^n}\)
			&
			right Bregman level proximal subdiff. of \(g\)
			&
		\\\hline
		\end{tabular}
		\caption{%
			Bregman domains and objects defined for ``left'' \(\func{f}{\X}{\Rinf}\) and ``right'' functions \(\func{g}{\Y}{\Rinf}\) relative to a proper, convex, lsc dgf \(\func{\kernel}{\R^n}{\Rinf}\) that is differentiable on \(\interior\dom\kernel\neq\emptyset\), and a stepsize parameter \(\lambda>0\).%
		}%
	\end{subtable}
	\begin{subtable}{\linewidth}
		\centering
		\begin{tabular}{@{}|c|c|c|@{}}
			\multicolumn{1}{@{}c}{\bf Object}
			&
			\multicolumn{1}{c}{\parbox{5em}{\centering\bf Definition}}
			&
			\multicolumn{1}{c@{}}{\parbox{5em}{\centering\bf Reference}}
		\\\hline
			\(\ffunc{\rsubdiff f\text{/}\lsubdiff f\text{/}\fsubdiff f}{X}{\R^n}\)
			&
			regular/limiting/Fenchel (convex) subdiff. of \(f\)
			&
			\eqref{eq:rsubdiff}/\eqref{eq:lsubdiff}/\eqref{eq:fsubdiff}
		\\\hline
			\(\func{\closure f\text{/}\conv f\text{/}\conv* f}{X}{\Rinf}\)
			&
			lsc/convex/closed-convex hull of \(f\)
			&
			\eqref{eq:closure}/\S\ref{sec:cvx}/\S\ref{sec:cvx}
		\\\hline
			\(\ffunc{\tilde T}{\R^n}{\R^n}\)
			&
			canonical extension of \(T\)
			&
			\cref{def:Text}
		\\
			\(\func{\tilde f}{\R^n}{\Rinf}\)
			&
			canonical extension of \(f\)
			&
			\cref{def:fext}
		\\\hline
		\end{tabular}%
		\caption{%
			Nonsmooth and convex analysis notions adapted to functions \(\func{f}{X}{\Rinf}\) and operators \(\ffunc{T}{X}{Y}\) defined on nonempty convex sets \(X,Y\subseteq\R^n\).
		}%
	\end{subtable}
	\caption{%
		Synopsis of the adopted notation with references to the respective definitions.
	}%
	\label{tab:notations}%
\end{table}

	\section{Preliminaries}\label{sec:preliminaries}
		The set of natural numbers is \(\N\coloneqq\set{0,1,2,\dots}\), while \(\R\) and \(\Rinf\coloneqq\R\cup\set{\pm\infty}\) denote the set of real and extended-real numbers, respectively.
We use \(\innprod{{}\cdot{}}{{}\cdot{}}\) to denote the standard inner product on \(\R^n\), and let \(\norm{x}=\sqrt{\innprod{x}{x}}\) be the induced norm.
We also denote \(\j(x)=\frac{1}{2}\norm{x}^2\).
The identity function (on a space clear from context) is denoted \(\id\).
The interior, closure, and boundary of a set \(S\subseteq\R^n\) are respectively denoted as \(\interior S\), \(\overline{S}\), and \(\boundary S=\overline{S}\setminus\interior S\).
The \emph{indicator function} of \(S\) is \(\func{\indicator_S}{\R^n}{\Rinf}\) defined as \(\indicator_S(x)=0\) if \(x\in S\) and \(\infty\) otherwise.

In what follows, we let \(X\) and \(Y\) be arbitrary convex and nonempty subsets of \(\R^n\), and revisit fundamental notions from convex and variational analysis for functions \(\func{f}{X}{\Rinf}\) and set-valued operators \(\ffunc{T}{X}{Y}\) (as opposed to standard full-space objects \(\func{f}{\R^n}{\Rinf}\) and \(\ffunc{T}{\R^n}{\R^n}\)).
This more general approach allows us to embrace a much larger class of functions in our developments;
the interested reader is referred to the recent work \cite{themelis2026natural} for a more detailed discussion.

\subsection{Set-valued operators}
The notation \(\ffunc{T}{X}{Y}\) indicates a \emph{set-valued operator} \(T\) mapping points \(x\in X\) to sets \(T(x)\subseteq Y\).
The \emph{graph} of \(T\) is the set \(\graph T\coloneqq\set{(x,y)\in X\times Y}[y\in T(x)]\), while its \emph{(effective) domain} is \(\dom T\coloneqq\set{x\in X}[T(x)\neq\emptyset]\) and its \emph{range} is \(\range T\coloneqq\bigcup_{x\in X}T(x)\).
	\(T\) is said to be \emph{outer semicontinuous} (osc) if \(\graph T\) is closed relative to \(X\times Y\) (in the sense that \(\graph T=C\cap(X\times Y)\), where \(C\) is a closed subset of \(\R^n\times\R^n\)).
	Equivalently, \(T\) is osc if
	\[
		\set{y\in Y}[\exists\seq{x^k,y^k}\to(x,y) \text{ with }(x^k,y^k)\in\graph T\ \forall k]
	\subseteq
		T(x)
	\]
	holds for every \(x\in X\)
	(the set on the left-hand side is a \emph{Painlevé-Kuratowski outer limit} \cite[Def. 4.1]{rockafellar1998variational} in the space \(X\times Y\)).
\(T\) is \emph{locally bounded} if every \(\bar x\in X\) admits a neighborhood \(\mathcal N_{\bar x}\) (relative to \(X\)) such that \(\bigcup_{x\in\mathcal N_{\bar x}}T(x)\) is bounded.
A set-valued operator \(\ffunc{T}{X}{Y}\) is \emph{monotone} if \(\innprod{x-\bar x}{y-\bar y}\geq0\) for every \((x,y),(\bar x,\bar y)\in\graph T\), and is \emph{maximally monotone} if there exists no monotone operator \(\tilde{T}\) such that \(\graph T\subset\graph\tilde{T}\).\footnote{%
	Throughout, we use the symbols ``\(\subset\)'' and ``\(\supset\)'' do denote \emph{strict} inclusion.
}
Moreover, \(T\) is said to be \emph{firmly nonexpansive (FNE)} if \(\innprod{x-\bar x}{y-\bar y}\geq\norm{y-\bar y}^2\) for every \((x,y),(\bar x,\bar y)\in\graph T\).

The \emph{inverse} of \(T\) is always defined as the set-valued mapping \(\ffunc{T^{-1}}{Y}{X}\) given by \(T^{-1}(y)=\set{x\in X}[y\in T(x)]\) for every \(y\in Y\).
Of particular importance is the so-called \emph{warped resolvent of \(T\) with kernel \(K\)}, given by \((K+T)^{-1}\circ K\) where \(\func{K}{X}{Y}\) is a single-valued operator; see \cite[Def. 1.1]{bui2020warped}.
This notion generalizes the classic (non-warped) resolvent \((\id+T)^{-1}\), corresponding to \(K=\id\) (and \(X=Y=\R^n\)).

\subsection{Extended-real-valued functions}
The \emph{(effective) domain} of a function \(\func{f}{X}{\Rinf}\) is \(\dom f\coloneqq\set{x\in X}[f(x)<\infty]\), and \(f\) is said to be \emph{proper} if \(f\not\equiv\infty\) and \(f>-\infty\).
The \emph{epigraph} of \(f\) is \(\epi f\coloneqq\set{(x,\alpha)\in X\times\R}[f(x)\leq\alpha]\), and \(f\) is said to be \emph{lower semicontinuous} (lsc) if \(f(x)=\liminf_{x'\to x}f(x')\) holds for every \(x\in X\).\footnote{%
	Since \(f\) is a function \emph{defined} on \(X\), convergence \(x'\to x\) is meant relative to \(X\), so that writing \(\liminf_{x'\to x}f(x')\) is to be read as \(\liminf_{X\ni x'\to x}f(x')\).
	Similar conventions will be employed throughout.
}
Equivalently, \(f\) is lsc if \(\epi f\) is closed relative to \(X\times\R\).
The \emph{lower semicontinuous hull} of \(f\) is the lsc function \(\func{\closure f}{X}{\Rinf}\) defined as the pointwise supremum among all lsc functions \(X\to\Rinf\) majorized by \(f\), pointwise given by
\begin{equation}\label{eq:closure}
	\closure f(x)=\liminf_{x'\to x}f(x').
\end{equation}
Clearly, \(f\) is lsc iff \(f=\closure f\).

The \emph{Fréchet subdifferential} of \(f\) is \(\ffunc{\rsubdiff f}{X}{\R^n}\) given by
\begin{equation}\label{eq:rsubdiff}
	\rsubdiff f(x)
\coloneqq
	\set{v\in\R^n}[
		\textstyle
		\liminf_{\substack{z\to x\\z\neq x}}
		\frac{
			f(z)-f(x)-\innprod{v}{z-x}
		}{
			\norm{z-x}
		}
	\geq
		0
	],
\end{equation}
and we call an element \(v\in\rsubdiff f(x)\) a \emph{regular subgradient} of \(f\) at \(x\).
The \emph{Mordukhovich (limiting) subdifferential} of \(f\) is \(\ffunc{\lsubdiff f}{X}{\R^n}\) given by
\begin{equation}\label{eq:lsubdiff}
	\lsubdiff f(x)
\coloneqq
	\set{v\in\R^n}[
		\exists(x^k,v^k)\in\graph\rsubdiff f,
		~
		k\in\N: (x^k,f(x^k),v^k)\to (x,f(x),v)
	]
\end{equation}
for \(x\in\dom f\), and \(\lsubdiff f(x)=\emptyset\) otherwise.

\subsection{Convex analysis}\label{sec:cvx}%
The \emph{Fenchel (convex) subdifferential} of \(f\) is \(\ffunc{\fsubdiff f}{X}{\R^n}\), where
\begin{equation}
\label{eq:fsubdiff}
	\fsubdiff f(x)
\coloneqq
	\set{u\in\R^n}[f(x')\geq f(x)+\innprod{u}{x'-x} ~ \forall x'\in X].
\end{equation}
In particular,
\begin{equation}\label{eq:Fermat}
	\bar x\in\argmin f
\quad\Leftrightarrow\quad
	0\in\fsubdiff f(\bar x).
\end{equation}
The \emph{convex hull} of \(f\), denoted \(\conv f\), is the pointwise supremum among all convex functions \(X\to\Rinf\) majorized by \(f\), while the supremum of all such functions which are additionally lsc is the \emph{closed convex hull}, denoted \(\conv*f\).
In particular, one always has that
\begin{equation}\label{eq:convleq}
	\conv*f\leq\conv f\leq f
\quad\text{and}\quad
	\conv*f\leq\closure f\leq f.
\end{equation}
The convex conjugate of \(\func{h}{\R^n}{\Rinf}\) is \(\func{h^*}{\R^n}{\Rinf}\) given by \(h^*(\xi)=\sup_{x\in\R^n}\set{\innprod{x}{\xi}-h(x)}\), and we remind that \cite[Prop. 13.45]{bauschke2017convex}
\begin{equation}\label{eq:h**}
	\conv*h=h^{**}.
\end{equation}
We say that \(h\) is \emph{coercive} if \(\liminf_{\norm{x}\to\infty}h(x)=\infty\), and \emph{1-coercive} if \(\liminf_{\norm{x}\to\infty}h(x)/\norm{x}=\infty\).
Finally, throughout this work, we adopt a slight abuse of notation by defining the sum of \(\func{h}{\R^n}{\Rinf}\) and \(\func{f}{X}{\Rinf}\) as the function \(\func{(f+h)}{X}{\Rinf}\) given by \(f+h\coloneqq f+h\restr_X\), with similar conventions extending to analogous operations.

\begin{fact}[{\cite[Prop. 1.4.3]{hiriarturruty1996convex}}]\label{fact:fsubdiff}%
	Let \(\func{f}{X}{\Rinf}\) be proper and \(\bar x\in X\) be fixed.
	Consider the following statements:
	\begin{enumerateq}
	\item \label{fact:fsubdiff:1}%
		\(\fsubdiff f(\bar x)\neq\emptyset\);
	\item \label{fact:fsubdiff:2}%
		\(f(\bar x)=\conv* f(\bar x)\in\R\);
	\item \label{fact:fsubdiff:3}%
		\(\fsubdiff f(\bar x)=\lsubdiff(\conv* f)(\bar x)\).
	\end{enumerateq}
	One has \ref{fact:fsubdiff:1} \(\Rightarrow\) \ref{fact:fsubdiff:2} \(\Rightarrow\) \ref{fact:fsubdiff:3}.
\end{fact}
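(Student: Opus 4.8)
The plan is to treat the two implications separately. The only structural inputs I need are the defining property of the closed convex hull \(\conv* f\) as the largest lsc convex minorant of \(f\) (encoded in \eqref{eq:convleq}) and the classical fact that, for a proper lsc convex function, the Fenchel, Fréchet, and Mordukhovich subdifferentials all coincide.

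For \ref{fact:fsubdiff:1} \(\Rightarrow\) \ref{fact:fsubdiff:2}, I would fix a subgradient \(u\in\fsubdiff f(\bar x)\). Properness first pins down \(f(\bar x)\in\R\): one has \(f(\bar x)>-\infty\), and if \(f(\bar x)=\infty\) then the subgradient inequality \(f(x')\ge f(\bar x)+\innprod{u}{x'-\bar x}\) would fail at any \(x'\) with \(f(x')<\infty\), forcing \(f\equiv\infty\) and contradicting \ref{fact:fsubdiff:1}. The affine function \(\ell\coloneqq f(\bar x)+\innprod{u}{{}\cdot{}-\bar x}\) is lsc and convex and, by definition of \(\fsubdiff f(\bar x)\), minorizes \(f\) on \(X\); hence \(\ell\le\conv* f\). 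Evaluating at \(\bar x\) gives \(f(\bar x)=\ell(\bar x)\le\conv* f(\bar x)\), and combined with \(\conv* f\le f\) from \eqref{eq:convleq} this yields \ref{fact:fsubdiff:2}.

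For \ref{fact:fsubdiff:2} \(\Rightarrow\) \ref{fact:fsubdiff:3}, set \(g\coloneqq\conv* f\) and split the identity into \(\fsubdiff f(\bar x)=\fsubdiff g(\bar x)\) and \(\fsubdiff g(\bar x)=\lsubdiff g(\bar x)\). The first equality uses only \ref{fact:fsubdiff:2}: if \(u\in\fsubdiff f(\bar x)\), the affine minorant \(\ell\) above lies below \(g\) (being lsc convex) and touches it at \(\bar x\) because \(g(\bar x)=f(\bar x)\), so \(u\in\fsubdiff g(\bar x)\); conversely, if \(u\in\fsubdiff g(\bar x)\) then \(f\ge g\ge g(\bar x)+\innprod{u}{{}\cdot{}-\bar x}=f(\bar x)+\innprod{u}{{}\cdot{}-\bar x}\), whence \(u\in\fsubdiff f(\bar x)\). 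For the second equality I would note that \(g\) is lsc and convex, and proper: by \eqref{eq:h**} we have \(g=f^{**}\), which is neither \(\equiv\infty\) nor \(\equiv-\infty\) since \(g(\bar x)=f(\bar x)\in\R\). Invoking the classical coincidence for proper lsc convex functions then gives \(\fsubdiff g(\bar x)=\rsubdiff g(\bar x)=\lsubdiff g(\bar x)\), which is \ref{fact:fsubdiff:3}.

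The genuinely delicate point is this last coincidence \(\fsubdiff g=\lsubdiff g\) for convex \(g\): the inclusions \(\fsubdiff g\subseteq\rsubdiff g\subseteq\lsubdiff g\) are immediate, but the reverse \(\lsubdiff g\subseteq\fsubdiff g\) rests on the osc (closed-graph) character of the convex subdifferential, which absorbs the outer limits defining \(\lsubdiff g\). The remaining care is purely bookkeeping — ensuring that these convex-analytic facts are applied relative to the subset \(X\) rather than to all of \(\R^n\), as dictated by the paper's conventions.
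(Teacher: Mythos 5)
Your proposal is correct, but there is nothing internal to compare it against: the paper does not prove \cref{fact:fsubdiff} at all; it imports it verbatim as a Fact from \cite[Prop. 1.4.3]{hiriarturruty1996convex} (the appendix re-proves only the facts borrowed from \cite{themelis2025natural}, not this one). Your argument is essentially the textbook one behind that citation: \ref{fact:fsubdiff:1} \(\Rightarrow\) \ref{fact:fsubdiff:2} by comparing the affine minorant \(\ell\) against \(\conv*f\) and sandwiching at \(\bar x\) via \eqref{eq:convleq}, and \ref{fact:fsubdiff:2} \(\Rightarrow\) \ref{fact:fsubdiff:3} by the two-step decomposition \(\fsubdiff f(\bar x)=\fsubdiff(\conv*f)(\bar x)=\lsubdiff(\conv*f)(\bar x)\), the last equality being the closed-graph coincidence of Fenchel, Fr\'echet, and limiting subdifferentials for proper lsc convex functions; all of this survives the restriction to a convex subset \(X\), as you correctly flag. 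Two minor imprecisions are worth patching. First, \eqref{eq:h**} is stated for functions on all of \(\R^n\), so writing \(\conv*f=f^{**}\) for \(f\) defined only on \(X\) misuses it; the in-paper statement you want is \cref{thm:cvxext}, namely \(\conv*f=\tilde f^{**}\restr_X\) for the canonical extension \(\tilde f\). Second, ``neither \(\equiv\infty\) nor \(\equiv-\infty\)'' is weaker than properness; the fact you actually need is that an lsc convex function on a convex set which takes one finite value can nowhere equal \(-\infty\) (if \(g(x_0)=-\infty\), convexity forces \(g=-\infty\) on the segment \([x_0,\bar x)\), contradicting lower semicontinuity at \(\bar x\)). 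With these cosmetic repairs your proof is a valid self-contained derivation of the cited fact.
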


By combining \cref{fact:fsubdiff} with the inequalities in \eqref{eq:convleq} it is apparent that, for any \(\func{f}{X}{\Rinf}\) proper and \(x\in X\), one has that
\begin{equation}\label{eq:fsubdiffs}
	\fsubdiff f(x)\subseteq\fsubdiff[\closure f](x)\subseteq\fsubdiff[\conv*f](x).
\end{equation}

\subsection{Canonical extensions}
We now introduce a conventional way to extend operators and functions defined on subsets \(X\subseteq\R^n\) to full-space objects.

\begin{definition}[canonical extension of a set-valued operator]\label{def:Text}%
	The \emph{canonical extension} of \(\ffunc{T}{X}{Y}\) is the operator \(\ffunc{\tilde T}{\R^n}{\R^n}\) given by
	\[
		\tilde T(x)
	=
		\begin{cases}
			T(x) & \text{if } x\in X,\\
			\emptyset & \text{otherwise.}
		\end{cases}
	\]
\end{definition}

\begin{definition}[canonical extension of an extended-real-valued function]\label{def:fext}%
	The \emph{canonical extension} of \(\func{f}{X}{\Rinf}\) is the function \(\func{\tilde f}{\R^n}{\Rinf}\) given by
	\[
		\tilde f(x)
	=
		\begin{cases}
			f(x) & \text{if } x\in X,\\
			\infty & \text{otherwise.}
		\end{cases}
	\]
\end{definition}

Extensions defined in \cref{def:fext,def:Text} inherit many properties of the original functions/operators.
For instance, the canonical extension \(\ffunc{\tilde T}{\R^n}{\R^n}\) has the same domain, range, and graph of \(\ffunc{T}{X}{Y}\), while \(\func{\tilde f}{\R^n}{\Rinf}\) shares epigraph and effective domain with \(\func{f}{X}{\Rinf}\), and is convex if and only if \(\tilde f\) is.
Nevertheless, we note that the extension \(\tilde f\) may fail to be lsc (on \(\R^n\)) even when \(\func{f}{X}{\Rinf}\) is (on \(X\)), and similarly outer semicontinuity of \(T\) does not guarantee the same property for \(\tilde T\).
Therefore, special care is needed to deal with this discrepancy.
We keep such discussion minimal in this paper, and refer the readers to \cite{themelis2026natural} for a detailed discussion.

The following lemma clarifies the relation between the hulls of a function defined on \(X\subseteq\R^n\) and those of its canonical extension.

\begin{fact}[{\cite[Lem. 2.13]{themelis2026natural}}]\label{thm:fext}%
	Let \(\func{f}{X}{\Rinf}\) be proper and let \(\func{\tilde f}{\R^n}{\Rinf}\) be its canonical extension as in \cref{def:fext}.
	Then, the following hold:
	\begin{enumerate}
	\item \label{thm:clext}%
		\(\closure f=(\closure\tilde f)\restr_X\).
		In particular, \(f\) is lsc iff there exists an lsc (not necessarily proper) function \(\func{\hat f}{\R^n}{\Rinf}\) such that \(f=\hat f\restr_X\).
	\item \label{thm:cvxext}%
		\(
			\tilde f^{**}\restr_X=(\conv*\tilde f)\restr_X=\conv*f
		\leq
			\conv f=(\conv\tilde f)\restr_X
		\).
		The inequality holds as equality whenever \(\tilde f\) is 1-coercive.
	\item \label{thm:clcvx}%
		\(f\) is convex iff \(\tilde f\) is convex.
		In this case, one has that \(\conv*f=\closure f\).
	\end{enumerate}
\end{fact}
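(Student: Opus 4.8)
The plan is to establish the three items in order, each time transporting the relevant hull from \(X\) to all of \(\R^n\) via the canonical extension \(\tilde f\) and then reading off the result by restriction, using \eqref{eq:h**} and \eqref{eq:convleq} as the only external inputs.

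For \cref{thm:clext} I would fix \(x\in X\) and compare the two lower limits through neighborhoods. Every relative neighborhood of \(x\) in \(X\) has the form \(U\cap X\) for a neighborhood \(U\) of \(x\) in \(\R^n\), and since \(\tilde f\equiv\infty\) off \(X\) while \(x\in U\cap X\neq\emptyset\), one has \(\inf_{U}\tilde f=\inf_{U\cap X}f\) for every such \(U\); taking the supremum over shrinking \(U\) gives \(\closure\tilde f(x)=\closure f(x)\), which is the asserted equality. The ``in particular'' clause follows at once: if \(f\) is lsc then \(\hat f\coloneqq\closure\tilde f\) is lsc on \(\R^n\) and restricts to \(\closure f=f\); conversely, if \(f=\hat f\restr_X\) with \(\hat f\) lsc, then for \(x\in X\), restricting the lower limit to \(X\) can only increase it, so \(f(x)\geq\closure f(x)=\liminf_{X\ni x'\to x}\hat f(x')\geq\closure\hat f(x)=\hat f(x)=f(x)\), forcing \(\closure f=f\).

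For \cref{thm:cvxext} the first equality is \eqref{eq:h**} applied to \(h=\tilde f\). The central equality \((\conv*\tilde f)\restr_X=\conv*f\) I would prove by two inequalities. The direction ``\(\leq\)'' is immediate, since any lsc convex minorant \(g\) of \(\tilde f\) on \(\R^n\) restricts to an lsc convex minorant \(g\restr_X\) of \(f\). For ``\(\geq\)'', given an lsc convex minorant \(h\) of \(f\) on \(X\), its canonical extension \(\tilde h\) is convex (canonical extension preserves convexity, the defining inequality being vacuous unless both endpoints lie in \(X\)) and satisfies \(\tilde h\leq\tilde f\) everywhere, hence so does the lsc convex function \(\closure\tilde h\); by \cref{thm:clext}, \((\closure\tilde h)\restr_X=\closure h=h\), so \(h\leq(\conv*\tilde f)\restr_X\), and the supremum over \(h\) yields the claim. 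The identity \(\conv f=(\conv\tilde f)\restr_X\) follows by the same scheme with the closure step omitted, while \(\conv*f\leq\conv f\) is \eqref{eq:convleq}. The equality case under \(1\)-coercivity reduces to the purely full-space assertion that \(\conv\tilde f=\conv*\tilde f=\tilde f^{**}\); this rests on the classical fact that the convex hull of a proper \(1\)-coercive function is already lsc, and this is the step I expect to be the main obstacle, the rest being bookkeeping about restriction.

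For \cref{thm:clcvx}, one implication is trivial because \(f=\tilde f\restr_X\) is the restriction of a convex function to the convex set \(X\); the converse is the convexity-preservation observation already used above. When \(f\)—equivalently \(\tilde f\)—is convex, \(\closure\tilde f\) is lsc and convex and dominates every lsc minorant of \(\tilde f\), hence coincides with the largest lsc convex minorant \(\conv*\tilde f\); restricting this equality to \(X\) and invoking \cref{thm:clext,thm:cvxext} gives \(\closure f=\conv*f\), completing the proof.
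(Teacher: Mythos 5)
Your proposal is correct and follows essentially the same route as the paper's own proof: the same liminf computation for item (i), the same two-sided argument for item (ii) (restricting \(\conv*\tilde f\) to \(X\) for one inequality, and canonically extending an lsc convex minorant of \(f\) and taking its lsc hull for the other), and the same reduction of item (iii) to the full-space fact that the lsc hull of a convex function is convex, then restricting. The one step where you defer to an unproved classical result---that \(\conv\tilde f=\conv*\tilde f\) when \(\tilde f\) is 1-coercive---is handled identically in the paper, which at exactly that point cites the lemma of Benoist and Hiriart-Urruty rather than proving it.
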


\begin{fact}[{\cite[Lem. 2.15]{themelis2026natural}}]\label{thm:argmincl}%
	For any \(\func{f}{X}{\Rinf}\) with \(X\subseteq\R^n\) nonempty, one has that \(\inf f=\inf\closure f=\inf\conv*f\) and \(\argmin f\subseteq\argmin\closure f\subseteq\argmin\conv*f\).
\end{fact}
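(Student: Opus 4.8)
The plan is to reduce everything to the pointwise sandwich $\conv* f \leq \closure f \leq f$ recorded in \eqref{eq:convleq}, combined with a single observation about constant minorants. Taking infima over $X$ in that chain immediately yields $\inf \conv* f \leq \inf \closure f \leq \inf f$, so to obtain the equality of all three infima it only remains to establish the reverse bound $\inf f \leq \inf \conv* f$.

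For that reverse bound I would argue with the constant function. Write $\mu \coloneqq \inf f$. If $\mu = -\infty$ the inequality $\mu \leq \inf \conv* f$ is vacuous, so I may assume $\mu > -\infty$. The constant function $X \ni x \mapsto \mu$ is convex and lsc (this covers both the case $\mu \in \R$ and the degenerate case $\mu = +\infty$, i.e.\ $f \equiv \infty$), and it is majorized by $f$ since $f(x) \geq \inf f = \mu$ for every $x \in X$. By the very definition of $\conv* f$ as the pointwise supremum of all convex lsc minorants of $f$, this forces $\mu \leq \conv* f(x)$ for all $x \in X$, whence $\mu = \inf f \leq \inf \conv* f$. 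Together with the chain above, this squeezes all three infima to the common value $\mu$.

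The argmin inclusions then follow from the same sandwich evaluated at a minimizer. If $x^* \in \argmin f$, then $\closure f(x^*) \leq f(x^*) = \mu$ by \eqref{eq:convleq}, while $\closure f(x^*) \geq \inf \closure f = \mu$ by the equality of infima just established; hence $\closure f(x^*) = \inf \closure f$, that is, $x^* \in \argmin \closure f$. Replacing the pair $(\closure f, f)$ by $(\conv* f, \closure f)$ and invoking $\conv* f \leq \closure f$ gives the second inclusion $\argmin \closure f \subseteq \argmin \conv* f$ verbatim.

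I do not expect a serious obstacle here; the only point requiring genuine care is the handling of the extended-real edge cases — an improper $f$, or $\inf f \in \{\pm\infty\}$ — where one must verify that the constant-minorant argument and the sandwich-at-the-minimizer argument degenerate gracefully (producing trivially true statements) rather than a meaningless or false bound. Everything else is a direct consequence of the pointwise ordering of $f$, $\closure f$, and $\conv* f$.
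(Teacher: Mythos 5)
Your proof is correct and complete. One contextual note: the paper does not actually prove this statement itself --- it is quoted as a \emph{Fact} from \cite[Lem.~2.15]{themelis2025natural}, and unlike \cref{thm:fext,thm:extlsc,thm:left*,thm:Euclidean} it is not among the proofs reproduced in the appendix --- so there is no in-paper argument to compare against. Your route is the natural one for such hull statements: the chain \(\conv*f\leq\closure f\leq f\) from \eqref{eq:convleq} gives one direction of the inequality between infima, and the observation that the constant function with value \(\inf f\) is itself a convex lsc minorant of \(f\) (hence lies below \(\conv*f\) pointwise, by the very definition of \(\conv*f\) as a pointwise supremum) gives the reverse direction; the argmin inclusions then follow by squeezing \(\closure f\) (respectively \(\conv*f\)) at a minimizer between the common infimum and the value of the function above it. The edge cases are handled correctly: \(\inf f=-\infty\) makes the reverse bound vacuous while the chain still forces all three infima to \(-\infty\), and for \(f\equiv\infty\) one can either accept the identically-\(+\infty\) function as a convex lsc minorant or simply take the supremum over all real constant minorants, which already forces \(\conv*f\equiv\infty\).
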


\subsection{Bregman proximal map and Moreau envelope}\label{sec:Bprox}
Throughout the paper, we will consider a function \(\kernel\) complying with the following minimal working assumption, assumed throughout without further mention; additional requirements will be invoked when needed.
\begin{myframe}
	\centering
	\(\func{\kernel}{\R^n}{\Rinf}\) is a \emph{distance-generating function (dgf)}, namely, proper, lsc, convex, and differentiable on \(\interior\dom\kernel\neq\emptyset\).
\end{myframe}
Moreover,
\begin{myframe}
	\centering
	we denote \(\X\coloneqq\dom\kernel\) and \(\Y\coloneqq\interior\dom\kernel\), and similarly \(\X*\coloneqq\dom\kernel*\) and \(\Y*\coloneqq\interior\dom\kernel*\).
\end{myframe}

The Bregman distance induced by \(\kernel\) is \(\func{\D}{\R^n\times\R^n}{\Rinf}\) defined by
\begin{equation}\label{eq:D}
	\D(x,y)
=
	\begin{cases}
		\kernel(x)-\kernel(y)-\innprod{\nabla\kernel(y)}{x-y} & \text{if } y\in\interior\dom\kernel
	\\
		\infty & \text{otherwise,}
	\end{cases}
\end{equation}
and we denote the \emph{symmetrized} version as \(\DD(x,y)\coloneqq\D(x,y)+\D(y,x)\), namely
\begin{equation}\label{eq:DD}
	\DD(x,y)
=
	\begin{cases}
		\innprod{\nabla\kernel(x)-\nabla\kernel(y)}{x-y} & \text{if } x,y\in\interior\dom\kernel
	\\
		\infty & \text{otherwise\color{black}.}
	\end{cases}
\end{equation}

\begin{fact}[three-point identity {\cite[Lem. 3.1]{chen1993convergence}}]\label{thm:3p}%
	For \(x\in\dom\kernel\) and \(y,z\in\interior\dom\kernel\) it holds that
	\[
		\D(x,z)=\D(x,y)+\D(y,z)+\innprod{x-y}{\nabla\kernel(y)-\nabla\kernel(z)}.
	\]
\end{fact}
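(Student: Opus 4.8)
The plan is to establish the identity by direct substitution of the definition \eqref{eq:D}, after first noting that the domain hypotheses render every term finite. Since $y,z\in\Y=\interior\dom\kernel$, the gradients $\nabla\kernel(y)$ and $\nabla\kernel(z)$ exist; since moreover $x\in\X=\dom\kernel$, the three quantities $\D(x,z)$, $\D(x,y)$, and $\D(y,z)$ are finite real numbers. Hence the asserted equality is an identity among reals, and no indeterminate form $\infty-\infty$ can arise.

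First I would expand the three Bregman distances using \eqref{eq:D}:
\begin{align*}
\D(x,z) &= \kernel(x)-\kernel(z)-\innprod{\nabla\kernel(z)}{x-z},\\
\D(x,y) &= \kernel(x)-\kernel(y)-\innprod{\nabla\kernel(y)}{x-y},\\
\D(y,z) &= \kernel(y)-\kernel(z)-\innprod{\nabla\kernel(z)}{y-z}.
\end{align*}
Then I would sum $\D(x,y)$, $\D(y,z)$, and the cross term $\innprod{x-y}{\nabla\kernel(y)-\nabla\kernel(z)}$ and simplify. The function values telescope, leaving $\kernel(x)-\kernel(z)$. The terms carrying $\nabla\kernel(y)$ cancel outright, since the contribution $-\innprod{\nabla\kernel(y)}{x-y}$ coming from $\D(x,y)$ is annihilated by the piece $+\innprod{x-y}{\nabla\kernel(y)}$ of the cross term. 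The surviving gradient terms all involve $\nabla\kernel(z)$, and bilinearity of $\innprod{{}\cdot{}}{{}\cdot{}}$ collapses them through $(y-z)+(x-y)=x-z$ into the single term $-\innprod{\nabla\kernel(z)}{x-z}$. Reassembling, the right-hand side equals $\kernel(x)-\kernel(z)-\innprod{\nabla\kernel(z)}{x-z}=\D(x,z)$, as claimed.

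I expect no genuine obstacle here: the statement is a purely algebraic rearrangement, and the only point demanding a modicum of care is the bookkeeping of the inner products together with the preliminary check that the domain assumptions keep each expression finite. I would therefore present the argument as a single chain of equalities.
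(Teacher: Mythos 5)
Your proof is correct: the finiteness check is sound (properness of \(\kernel\) plus \(x\in\dom\kernel\) and \(y,z\in\interior\dom\kernel\) make every term a real number), and the telescoping computation verifies the identity exactly. The paper itself offers no proof, citing this as a known fact from Chen and Teboulle, and your direct expansion of the three Bregman distances is precisely the standard argument behind that reference, so there is nothing further to compare.
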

\begin{definition}[Legendre function]%
		We say that \(\kernel\) is \emph{of Legendre type} (or simply \emph{Legendre}) if it is
		\begin{enumerate}
		\item
			\emph{essentially smooth}, namely differentiable on \(\interior\dom\kernel\neq\emptyset\) and such that \(\norm{\nabla\kernel(x^k)}\to\infty\) whenever \(\interior\dom\kernel\ni x^k\to x\in\boundary\dom\kernel\), and
		\item
			\emph{essentially strictly convex}, namely strictly convex on every convex subset of \(\dom\lsubdiff\kernel\).
		\end{enumerate}
	\end{definition}

\begin{fact}[{\cite[Thm. 26.5]{rockafellar1970convex}} and {\cite[Thm. 3.7(v)]{bauschke1997legendre}}]\label{thm:D*}%
	Function \(\kernel\) is of Legendre type iff its conjugate \(\kernel*\) is.
	In this situation, one has that \(\func{\nabla\kernel}{\interior\dom\kernel}{\interior\dom\kernel*}\) is a bijection with inverse \(\nabla\kernel*\), and the following identity holds:
	\[
		\D(x,y)
	=
		\D*(\nabla\kernel(y),\nabla\kernel(x))
	\quad
		\forall x,y\in\interior\dom\kernel.
	\]
\end{fact}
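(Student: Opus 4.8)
The plan is to split the statement into its three components---self-duality of the Legendre property, bijectivity of $\nabla\kernel$ with inverse $\nabla\kernel*$, and the duality identity for the Bregman distance---and to observe that the first two are consequences of classical conjugacy theory while the third reduces to a direct computation once the gradient bijection is in hand. For the self-duality, the essential ingredient is the duality, under the Legendre--Fenchel transform, between essential smoothness and essential strict convexity: a proper lsc convex function is essentially smooth if and only if its conjugate is essentially strictly convex, and conversely. Granting this correspondence, $\kernel$ is Legendre (i.e.\ simultaneously essentially smooth and essentially strictly convex) precisely when $\kernel*$ enjoys both properties, which is the claimed equivalence.

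Next I would derive the gradient bijection from the Fenchel--Young characterization $v\in\fsubdiff\kernel(x)\Leftrightarrow x\in\fsubdiff\kernel*(v)\Leftrightarrow\kernel(x)+\kernel*(v)=\innprod{x}{v}$, which exhibits $\graph\fsubdiff\kernel*$ as the coordinate-swap of $\graph\fsubdiff\kernel$. Essential smoothness of $\kernel$ forces $\fsubdiff\kernel$ to be single-valued and equal to $\nabla\kernel$ on $\interior\dom\kernel$, and empty elsewhere, so that $\range\nabla\kernel=\dom\fsubdiff\kernel*$; applying the same reasoning to the (Legendre, hence essentially smooth) conjugate gives $\dom\fsubdiff\kernel*=\interior\dom\kernel*$ with $\fsubdiff\kernel*=\nabla\kernel*$ there. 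Essential strict convexity of $\kernel$ yields injectivity of $\nabla\kernel$, and the coordinate-swap identity then shows that $\nabla\kernel*$ inverts $\nabla\kernel$; thus $\func{\nabla\kernel}{\interior\dom\kernel}{\interior\dom\kernel*}$ is a bijection with inverse $\nabla\kernel*$.

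Finally, for the identity, fix $x,y\in\Y=\interior\dom\kernel$ and set $u\coloneqq\nabla\kernel(y)$ and $v\coloneqq\nabla\kernel(x)$, both lying in $\interior\dom\kernel*$; by the bijection $\nabla\kernel*(v)=x$. Writing out
\[
	\D*(u,v)=\kernel*(u)-\kernel*(v)-\innprod{\nabla\kernel*(v)}{u-v}
\]
and substituting the Fenchel--Young equalities $\kernel*(\nabla\kernel(y))=\innprod{y}{\nabla\kernel(y)}-\kernel(y)$ and $\kernel*(\nabla\kernel(x))=\innprod{x}{\nabla\kernel(x)}-\kernel(x)$, the two occurrences of $\innprod{x}{\nabla\kernel(x)}$ cancel and one is left, after regrouping, with $\kernel(x)-\kernel(y)-\innprod{\nabla\kernel(y)}{x-y}=\D(x,y)$, as desired.

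I expect the only genuine obstacle to be the self-duality of essential smoothness and essential strict convexity: this is the deep convex-analytic input (precisely Rockafellar's \cite[Thm. 26.5]{rockafellar1970convex}) and requires a careful analysis of subgradients at boundary points of $\dom\kernel$. By contrast, the gradient bijection is a formal consequence of Fenchel--Young duality, and the Bregman identity is pure algebra once the substitution $\nabla\kernel*\circ\nabla\kernel=\id$ on $\interior\dom\kernel$ is available.
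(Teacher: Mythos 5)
Your proposal is correct and follows essentially the same route as the paper, which states this result as a \emph{Fact} imported from the literature rather than proving it: the self-duality of the Legendre property together with the gradient bijection is precisely the content of Rockafellar's Thm.\ 26.5 (the deep input you correctly isolate, resting on the conjugacy duality between essential smoothness and essential strict convexity), and your Fenchel--Young computation verifying \(\D(x,y)=\D*(\nabla\kernel(y),\nabla\kernel(x))\) is the standard argument behind Bauschke--Borwein's Thm.\ 3.7(v). Nothing further is needed.
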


Now we define the Bregman proximal maps and the Bregman--Moreau envelope of functions \(\func{f}{\X}{\Rinf}\) and \(\func{g}{\Y}{\Rinf}\).
Although functions \(\func{f,g}{\R^n}{\Rinf}\) are usually considered in the literature, see for instance \cite{kan2012moreau,wang2022bregman}, our approach takes into account a broader class of functions while not affecting the validity of known results;
see \cref{ex:ln} for an illustration and \cite[\S3]{themelis2026natural} for a more detailed discussion.
All these definitions are relative to a \emph{stepsize} parameter \(\lambda>0\).

\begin{definition}[Bregman proximal mappings]\label{def:prox}%
	The \emph{left (Bregman) proximal mapping} of \(\func{f}{\X}{\Rinf}\) is \(\ffunc{\prox_{\lambda f}}{\Y}{\X}\) defined by
	\begin{subequations}
		\begin{align}\label{eq:prox}
			\prox_{\lambda f}(\bar y)
		\coloneqq{} &
			\argmin_{x\in\X}\set{f(x)+\tfrac{1}{\lambda}\D(x,\bar y)}.
		\intertext{%
			The \emph{right (Bregman) proximal mapping} of \(\func{g}{\Y}{\Rinf}\) is \(\ffunc{\prox*_{\lambda g}}{\X}{\Y}\), where%
		}
			\prox*_{\lambda g}(\bar x)
		\coloneqq{} &
			\argmin_{y\in\Y}\set{g(y)+\tfrac{1}{\lambda}\D(\bar x,y)}.
		\end{align}
	\end{subequations}
\end{definition}

\begin{definition}[Bregman--Moreau envelopes]\label{def:env}%
	The \emph{left (Bregman--Moreau) envelope} of \(\func{f}{\X}{\Rinf}\) is \(\func{\env{f}}{\Y}{\Rinf}\) defined by
	\begin{subequations}
		\begin{align}
			\env{f}(\bar y)
		\coloneqq{} &
			\inf_{x\in\X}\set{f(x)+\tfrac{1}{\lambda}\D(x,\bar y)}.
		\intertext{%
			The \emph{right (Bregman--Moreau) envelope} of \(\func{g}{\Y}{\Rinf}\) is \(\func{\env*{g}}{\X}{\Rinf}\), where%
		}
			\env*{g}(\bar x)
		\coloneqq{} &
			\inf_{y\in\Y}\set{g(y)+\tfrac{1}{\lambda}\D(\bar x,y)}.
		\end{align}
	\end{subequations}
\end{definition}

\begin{definition}[\(\kernel\)-prox-boundedness {\cite[Def. 2.3]{kan2012moreau}}]\label{def:PB}%
	We say that \(\func{f}{\X}{\Rinf}\) is \emph{\(\kernel\)-prox-bounded} if there exist \(\lambda>0\) and \(y\in\Y\) such that \(\env{f}(y)>-\infty\).
	The supremum \(\pb\) of all such \(\lambda\) is the \emph{\(\kernel\)-prox-boundedness threshold} of \(f\).
\end{definition}

The main consequences of \(\kernel\)-prox-boundedness are listed in \cref{thm:mainprop}.
Before that, the following preliminary result reassures us that the referenced proofs can correctly be invoked in our more general setting, even for those functions \(\func{f}{\X}{\Rinf}\) which are proper and lsc on \(\X\) but may not possess an extension preserving both properties on the entire space \(\R^n\), such as the one in \cref{ex:ln}.

\begin{fact}[{\cite[Lem. 3.9]{themelis2026natural}}]\label{thm:extlsc}%
	Let \(\func{f}{\X}{\Rinf}\) and extend it to \(\func{\tilde f}{\R^n}{\Rinf}\) by setting it to \(\infty\) on \(\R^n\setminus\X\) as in \cref{def:fext}.
	Then, for any \(y\in\Y\) and \(\lambda>0\) one has
	\[
		\prox_{\lambda f}(y)=\prox_{\lambda\tilde f}(y)
	\quad\text{and}\quad
		\env{f}(y)=\env{\tilde f}(y).
	\]
	When \(f\) is \(\kernel\)-prox bounded, then the following also hold for any \(\lambda\in(0,\pb)\):
	\begin{enumerate}
	\item
		\textup{\cite[Prop. 2.4(i)]{wang2022bregman}}
		If \(\kernel\) is 1-coercive, then \(\lambda\tilde f+\kernel\) too is 1-coercive.
	\item \label{thm:extlsc:lsc}%
		If \(f\) is lsc (on \(\X\)), then \(\lambda\tilde f+\kernel\) too is lsc (on \(\R^n\)).
	\end{enumerate}
\end{fact}

\begin{fact}[{\cite[Thm. 2.2 and 2.6, Cor. 2.2]{kan2012moreau}}]\label{thm:mainprop}%
	Suppose that \(\kernel\) is 1-coercive, and that \(\func{f}{\X}{\Rinf}\) is proper, lsc, and \(\kernel\)-prox-bounded.
	Then, for any \(\lambda\in(0,\pb)\) the following hold:
	\begin{enumerate}
	\item
		\(\dom\env{f}=\dom\prox_{\lambda f}=\Y\).
	\item
		\(\func{\env{f}}{\Y}{\R}\) is continuous.
	\item \label{thm:mainprop:osc}%
		\(\ffunc{\prox_{\lambda f}}{\Y}{\X}\) is compact-valued and \(\graph\prox_{\lambda f}\) is closed in \(\Y\times \R^n\).
	\end{enumerate}
\end{fact}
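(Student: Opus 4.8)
The plan is to reduce everything to the full-space results of Kan and Song by passing to the canonical extension \(\func{\tilde f}{\R^n}{\Rinf}\) of \cref{def:fext}. By \cref{thm:extlsc}, the left proximal map and envelope of \(f\) coincide with those of \(\tilde f\) throughout \(\Y\), so it suffices to establish the three claims for \(\tilde f\). The only delicate point is that \(\tilde f\) may fail to be lsc on \(\R^n\), which forbids a direct appeal to \cite{kan2012moreau}; this is exactly what \cref{thm:extlsc} repairs, since for \(\lambda\in(0,\pb)\) it guarantees that the combined function \(h\coloneqq\lambda\tilde f+\kernel\) is simultaneously lsc on \(\R^n\) and 1-coercive. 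Every argument below will therefore be phrased in terms of the well-behaved function \(h\).

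Fixing \(\bar y\in\Y\), I would study the parametric objective \(\func{\psi_{\bar y}}{\R^n}{\Rinf}\), \(\psi_{\bar y}(x)\coloneqq\tilde f(x)+\tfrac1\lambda\D(x,\bar y)\). Expanding \eqref{eq:D} and collecting the terms independent of \(x\) into a constant \(c_{\bar y}\in\R\),
\[
	\psi_{\bar y}(x)=\tfrac1\lambda h(x)-\tfrac1\lambda\innprod{\nabla\kernel(\bar y)}{x}+c_{\bar y}.
\]
Since \(h\) is lsc and 1-coercive, its superlinear growth dominates the linear term, so \(\psi_{\bar y}\) is lsc and level-bounded; it is proper because any \(x\in\dom f\) satisfies \(\D(x,\bar y)<\infty\) (as \(\bar y\in\Y\)) and \(h(x)>-\infty\). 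The Weierstrass theorem then gives that \(\inf\psi_{\bar y}=\env{f}(\bar y)\) is finite and attained, so \(\bar y\in\dom\env{f}\cap\dom\prox_{\lambda f}\); as \(\bar y\in\Y\) was arbitrary this yields item (i). Moreover \(\prox_{\lambda f}(\bar y)=\argmin\psi_{\bar y}\) is closed (by lsc) and bounded (by level-boundedness), hence compact, giving the first half of item (iii).

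The genuinely harder assertions are the continuity of \(\env{f}\) (item (ii)) and the closedness of \(\graph\prox_{\lambda f}\) relative to \(\Y\times\R^n\) (second half of item (iii)); I expect the lower-semicontinuity direction to be the main obstacle. Upper semicontinuity of \(\env{f}\) is automatic, being the pointwise infimum over \(x\in\X\) of the maps \(\bar y\mapsto f(x)+\tfrac1\lambda\D(x,\bar y)\), each continuous on \(\Y\) because \(\kernel\) and \(\nabla\kernel\) are continuous on \(\Y=\interior\dom\kernel\). For the reverse inequality along \(\bar y^k\to\bar y\in\Y\), I would choose minimizers \(x^k\in\prox_{\lambda f}(\bar y^k)\) and invoke a coercivity estimate that is \emph{uniform} over a relatively compact neighborhood of \(\bar y\)---available because \(h\) is 1-coercive while \(\nabla\kernel\) stays bounded near \(\bar y\)---to force the sequence \(\seq{x^k}\) to be bounded. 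Extracting a convergent subsequence \(x^k\to x\) and using the joint lower semicontinuity of \((x,\bar y)\mapsto\tilde f(x)+\tfrac1\lambda\D(x,\bar y)\) then delivers \(\liminf_k\env{f}(\bar y^k)\geq\env{f}(\bar y)\), completing continuity; the identical boundedness-and-limit argument applied to a graph-convergent sequence \((\bar y^k,x^k)\to(\bar y,x)\) with \(x^k\in\prox_{\lambda f}(\bar y^k)\) shows \(x\in\prox_{\lambda f}(\bar y)\) and closes the graph. Since all three arguments rest only on the lsc and 1-coercivity of \(h\) furnished by \cref{thm:extlsc}, the statements of \cite{kan2012moreau} transfer verbatim from \(\tilde f\) to \(f\).
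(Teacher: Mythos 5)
Your proposal is correct and follows exactly the route the paper takes: this statement is a cited Fact, and the paper's own justification is precisely the reduction you perform---pass to the canonical extension \(\tilde f\), invoke \cref{thm:extlsc} so that \(\lambda\tilde f+\kernel\) is lsc and 1-coercive, and observe that this is all the Kan--Song arguments actually need, even though \(\tilde f\) itself may fail to be lsc on \(\R^n\). The Weierstrass, uniform-coercivity, and joint-lower-semicontinuity details you sketch are exactly the content of the proofs in \cite{kan2012moreau} (as transferred in \cite{themelis2025natural}), so your write-up is simply a self-contained rendering of what the paper delegates to the references.
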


We note that the properties listed above are proven in \cite{kan2012moreau} for functions defined and assumed lsc \emph{on the whole space \(\R^n\)}.
The more general statement considered here is taken from \cite[Fact 3.11]{themelis2026natural}, where lower semicontinuity merely on \(\X\) is shown to suffice.
The key observation behind this generalization lies in \cref{thm:extlsc}.
The advantages of this generalization is not the focus of this work, therefore we only illustrate an instance with the example below and refer the interested readers to \cite{themelis2026natural} for a comprehensive discussion on this topic.

\begin{example}\label{ex:ln}%
	Let \(\func{\kernel}{\R}{\Rinf}\) be defined as \(\kernel(x)=-\ln(x)\) for \(x\in\X=(0,\infty)\) and \(\kernel(x)=\infty\) for \(x\leq 0\).
	Then, \(\func{f}{\X}{\Rinf}\) given by \(f(x)=\ln(x)\) is proper, lsc, and \(\kernel\)-prox-bounded with \(\pb=1\).
	Although \(f\) does not admit a proper and lsc extension on \(\R\), \(\prox_{\lambda f}\) and \(\env{f}\) still enjoy the properties stated in \cref{thm:mainprop} for any \(\lambda\in(0,\pb)\).
\end{example}

\begin{definition}[Bregman proximal hull]%
	The \emph{left Bregman proximal hull} of \(\func{f}{\X}{\Rinf}\) is the function \(\func{\hull_\lambda{f}}{\X}{\Rinf}\) given by \(\hull_\lambda{f}\coloneqq-\env*{(-\env{f})}\).
\end{definition}

We end this section with several technical identities.
Some of these reduce to known formulas from \cite[Fact 2.6 \& Prop. 2.14]{wang2022bregman} when additional assumptions are met.

\begin{fact}[{\cite[Lem. 3.14]{themelis2026natural}}]\label{thm:left*}%
	Given \(\func{f}{\X}{\Rinf}\), extend it to \(\func{\tilde f}{\R^n}{\Rinf}\) by setting it to \(\infty\) on \(\R^n\setminus\X\) as in \cref{def:fext}.
	Then, for any \(\lambda>0\) the following hold:
	\begin{enumerate}
	\item \label{thm:envconj}%
		\(
			\lambda\env{f}
		=
			\bigl[\kernel*-(\lambda\tilde f+\kernel)^*\bigr]\circ\nabla\kernel
		\).
		In particular, \(-\env{f}\) is lsc.
		Moreover, when \(\kernel\) is of Legendre type,
		\(
			\lambda\env{f}\circ\nabla\kernel*
		=
			\bigl[\kernel*-(\lambda\tilde f+\kernel)^*\bigr]\restr_{\Y*}
		\).

	\item \label{thm:hull}%
		\(
			\begin{array}[t]{@{}r@{}l@{}}
				\lambda\hull_\lambda{f}
			=
				\bigl((\lambda\tilde f+\kernel)^*+\indicator_{\range\nabla\kernel}\bigr)^*
				\restr_{\X}
				-
				\kernel
			\leq{} &
				\conv*(\lambda f+\kernel)
				-
				\kernel
			\\
			\leq{} &
				\conv(\lambda f+\kernel)
				-
				\kernel
			\leq
				\lambda f.
			\end{array}
		\)\newline
		If \(\kernel\) is essentially smooth and 1-coercive, then the first inequality holds as equality.
		If instead \(\kernel\) is 1-coercive and \(f\) is \(\kernel\)-prox-bounded with \(\lambda<\pb\), then the second inequality too holds as equality.
	\end{enumerate}
\end{fact}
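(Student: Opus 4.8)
The plan is to reduce every quantity to the Fenchel conjugate of the extended function \(\lambda\tilde f+\kernel\), after which \cref{thm:envconj}, \cref{thm:hull}, and the whole inequality chain follow from elementary conjugation rules together with \cref{thm:fext,thm:extlsc}. Throughout I would abbreviate \(F\coloneqq\lambda f+\kernel\) on \(\X\), whose canonical extension is \(\tilde F=\lambda\tilde f+\kernel\), and repeatedly invoke the Fenchel--Young equality \(\kernel*(\nabla\kernel(y))=\innprod{\nabla\kernel(y)}{y}-\kernel(y)\), valid because \(\nabla\kernel(y)\in\fsubdiff\kernel(y)\) for every \(y\in\Y\).

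For \cref{thm:envconj}, fix \(\bar y\in\Y\), set \(y^*\coloneqq\nabla\kernel(\bar y)\), and expand \(\lambda\env{f}(\bar y)=\inf_{x\in\X}\{\lambda f(x)+\kernel(x)-\kernel(\bar y)-\innprod{y^*}{x-\bar y}\}\). Collecting the \(x\)-independent terms and using that \(\tilde F\) coincides with \(F\) on \(\X\) and equals \(+\infty\) elsewhere, the infimum over \(\X\) becomes a full-space supremum producing \(-\tilde F^*(y^*)\), while the leftover constant \(\innprod{y^*}{\bar y}-\kernel(\bar y)\) equals \(\kernel*(y^*)\) by Fenchel--Young. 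This yields \(\lambda\env{f}=[\kernel*-\tilde F^*]\circ\nabla\kernel\). The Legendre specialization then follows by composing on the right with \(\nabla\kernel*\) and invoking \cref{thm:D*}, whereby \(\nabla\kernel\circ\nabla\kernel*=\id\) on \(\interior\dom\kernel*\).

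For \cref{thm:hull}, unfolding \(\hull_{\lambda}{f}=-\env*{(-\env{f})}\) gives \(\lambda\hull_{\lambda}{f}(\bar x)=\sup_{y\in\Y}\{\lambda\env{f}(y)-\D(\bar x,y)\}\). Substituting the formula just obtained and cancelling the \(\kernel(y)\)-terms via Fenchel--Young, the objective collapses to \(\innprod{\nabla\kernel(y)}{\bar x}-\tilde F^*(\nabla\kernel(y))-\kernel(\bar x)\), which depends on \(y\) only through \(y^*=\nabla\kernel(y)\). Hence the supremum ranges over \(y^*\in\range\nabla\kernel\), and appending \(\indicator_{\range\nabla\kernel}\) recognizes it as the conjugate \([\tilde F^*+\indicator_{\range\nabla\kernel}]^*(\bar x)\); subtracting \(\kernel(\bar x)\) and restricting to \(\X\) yields the stated equality. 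Since \(\indicator_{\range\nabla\kernel}\geq0\), conjugation reverses the ordering to \([\tilde F^*+\indicator_{\range\nabla\kernel}]^*\leq\tilde F^{**}\), and combining \eqref{eq:h**} with \cref{thm:cvxext} identifies \(\tilde F^{**}\restr_\X\) with \(\conv* F=\conv*(\lambda f+\kernel)\); the remaining two inequalities are immediate from \eqref{eq:convleq}.

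It remains to treat the equality cases. If \(\kernel\) is essentially smooth and 1-coercive, then \(\dom\kernel*=\R^n\) forces \(\interior\dom\kernel*=\R^n\subseteq\range\nabla\kernel\subseteq\dom\kernel*=\R^n\), so \(\indicator_{\range\nabla\kernel}\equiv0\) and the first inequality is tight. If instead \(\kernel\) is 1-coercive with \(\lambda<\pb\), then \cref{thm:extlsc} shows \(\tilde F=\lambda\tilde f+\kernel\) is 1-coercive, whence \cref{thm:cvxext} upgrades \(\conv* F\leq\conv F\) to equality. The main point to watch is the bookkeeping of full-space conjugates against their restrictions to \(\X\), and in particular the identity \(\range\nabla\kernel=\R^n\) for essentially smooth 1-coercive \(\kernel\), which is precisely what makes the penalizing indicator disappear and collapses \eqref{thm:hull} onto \(\conv*(\lambda f+\kernel)-\kernel\).
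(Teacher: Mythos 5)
Your proposal is correct and follows essentially the same route as the paper's own proof: both parts reduce to the Fenchel conjugate of \(\lambda\tilde f+\kernel\) via the Fenchel--Young equality, the supremum over \(\range\nabla\kernel\) is recognized as the conjugate of \((\lambda\tilde f+\kernel)^*+\indicator_{\range\nabla\kernel}\), and the inequality/equality cases are settled by \cref{thm:cvxext,thm:extlsc} together with \(\range\nabla\kernel=\R^n\) for essentially smooth 1-coercive \(\kernel\). The only cosmetic difference is that in part (ii) you substitute the closed-form expression from part (i) into the supremum, whereas the paper re-expands the envelope as an infimum and manipulates the resulting sup-inf directly; the computations are otherwise identical.
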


Lower semicontinuity of \(-\env{f}\), while not explicitly stated in \cite[Lem. 3.14]{themelis2026natural}, is a straightforward consequence of continuity of \(\kernel*\circ\nabla\kernel(y)=\innprod{y}{\nabla\kernel(y)}-\kernel(y)\) on \(\Y\) and lower semicontinuity of Fenchel conjugates (or, more generally, of the fact that \(-\env_\lambda{f}\) is the supremum of lsc functions).
Under Legendreness, notice further that \(\env{f}\circ\nabla\kernel*\) is a difference-of-convex functions according to \cref{thm:envconj}.
Under an additional 1-coercivity assumption, this composition enjoys nice regularity properties that we summarize next, and that will play an important role in establishing the bottom-right characterization of \cref{fig:cvx}.
This result parallels the well-known properties of the Moreau envelope in the Euclidean setting as reported in \cite[Ex. 10.32]{rockafellar1998variational}, whose proof is here straightforwardly adaptated to the Bregman setting.
Importantly, composing with \(\nabla\kernel*\) allows us to bypass second-order assumptions required in related results, such as those in \cite{kan2012moreau}.
We also mention that a weaker subdifferential inclusion holding without 1-coercivity will be provided in \cref{thm:lsubdiffenv}.

\begin{lemma}[subsmoothness of the negative envelope]\label{thm:upperC1}%
	Suppose that \(\kernel\) is Legendre and 1-coercive, and let \(\func{f}{\X}{\Rinf}\) be proper, lsc, and \(\pb\)-prox-bounded.
	Then, for any \(\lambda\in(0,\pb)\) the function \(\func{h_\lambda\coloneqq\env{f}\circ\nabla\kernel*}{\R^n}{\Rinf}\) is (finite-valued and) upper-\(C^1\) in the sense of \cite[Def. 10.29]{rockafellar1998variational}, and in particular locally Lipschitz continuous.
	Moreover, for any \(\eta\in\R^n\) one has that
	\begin{align*}
		\rsubdiff[-h_\lambda](\eta)
	=
		\lsubdiff[-h_\lambda](\eta)
	={} &
		\lambda^{-1}
		\bigl[\conv\prox_{\lambda f}-\id\bigr]\circ\nabla\kernel*(\eta),
	\\
		\lsubdiff h_\lambda(\eta)
	\subseteq{} &
		\lambda^{-1}
		\bigl[\id-\prox_{\lambda f}\bigr]\circ\nabla\kernel*(\eta).
	\end{align*}
	Furthermore,  \(\rsubdiff h_\lambda=\lambda^{-1}[\id-\prox_{\lambda f}]\circ\nabla\kernel*\) wherever the set on the right-hand side is a singleton (or, equivalently, wherever \(h_\lambda\) is differentiable), and is empty otherwise.
\end{lemma}
\begin{proof}
	Using the identity \(\kernel(\nabla\kernel*(\eta))-\innprod{\eta}{\nabla\kernel*(\eta)}=-\kernel*(\eta)\) we note that
	\[
		-\lambda h_\lambda(\eta)
	=
		-\inf_{x\in\X}\set{\lambda f(x)+\D(x,\nabla\kernel*(\eta))}
	=
		\sup_{x\in\R^n}\set*{
			\overbracket[0.5pt]{
				\innprod{\eta}{x} - \kernel*(\eta) - (\lambda\tilde f+\kernel)(x)
			}^{\coloneqq f_x(\eta)}
		}.
	\]
	Further noting that the supremum is attained on \(\prox_{\lambda f}\circ\nabla\kernel*(\eta)\), the properties assessed in \cref{thm:mainprop} ensure that any \(\bar\eta\in\R^n\) admits a neighborhood \(\mathcal N_{\bar\eta}\) together with a compact set \(X\subseteq\R^n\) such that
	\[
		-\lambda h_\lambda(\eta)
	=
		\max_{x\in X}f_x(\eta)
	\quad
		\forall\eta\in\mathcal{N}_{\bar\eta}.
	\]
	Since \(f_x(\eta)\) is continuous and its gradient depends continuously on \((x,\eta)\), \(-\lambda h_\lambda\) is lower-\(C^1\) according to \cite[Def. 10.29]{rockafellar1998variational}.
	All the claims then follow from \cite[Thm. 10.31 and Cor. 9.21]{rockafellar1998variational}.
\end{proof}

\begin{fact}[Euclidean representations {\cite[Cor. 3.18]{themelis2026natural}}]\label{thm:Euclidean}%
	Given \(\func{f}{\X}{\Rinf}\), extend it to \(\func{\tilde f}{\R^n}{\Rinf}\) by setting it to \(\infty\) on \(\R^n\setminus\X\) as in \cref{def:fext}.
	Then, for any \(\lambda>0\) the following identities hold:
	\begin{enumerate}
	\item
		\(
			\prox_{\lambda f}
		=
			\Eprox_{\lambda\left(\tilde f+\frac{\kernel-\j}{\lambda}\right)}\circ\nabla\kernel
		\).
	\item
		\(
			\env{f}
		=
			\bigl[
				\Eenv_\lambda(\tilde f+\tfrac{\kernel-\j}{\lambda})
				+
				\tfrac{\kernel*-\j}{\lambda}
			\bigr]
			\circ
			\nabla\kernel
		\).
	\end{enumerate}
	If \(\kernel\) is Legendre and 1-coercive, then the following also holds
	\begin{enumerate}[resume]
	\item
		\(
			\hull_\lambda{f}
		=
			\bigl[
				\Ehull_\lambda(\tilde f+\tfrac{\kernel-\j}{\lambda})
				-
				\tfrac{\kernel-\j}{\lambda}
			\bigr]\restr_{\X}
		\),
		where \(\Ehull_\lambda{f}\coloneqq\hull_\lambda^{\j}f\) denotes the Euclidean proximal hull \cite[Ex. 1.44]{rockafellar1998variational}.
	\end{enumerate}
\end{fact}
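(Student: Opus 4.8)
The plan is to reduce each Bregman object to its Euclidean analogue through a single completion of squares: the correction $-\tfrac{1}{\lambda}\j$ hidden inside the Euclidean operators is designed to cancel the quadratic $\tfrac{1}{2\lambda}\norm{{}\cdot{}}^2$ that those operators carry, leaving behind the kernel $\kernel$, while the post-composition with $\nabla\kernel$ absorbs the linear term $-\innprod{\nabla\kernel(\bar y)}{{}\cdot{}}$ buried in $\D({}\cdot{},\bar y)$. Throughout I use the canonical extension $\tilde f$ of \cref{def:fext}, which is $+\infty$ off $\X=\dom\kernel$, so that minimizing over $\X$ and over $\R^n$ is the same whenever $\kernel$ is present in the objective.

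For (i), fix $\bar y\in\Y$ and expand $\D(x,\bar y)=\kernel(x)-\kernel(\bar y)-\innprod{\nabla\kernel(\bar y)}{x-\bar y}$. Dropping the summands depending on $\bar y$ alone leaves
\[
	\prox_{\lambda f}(\bar y)=\argmin_{x\in\X}\set{\lambda f(x)+\kernel(x)-\innprod{\nabla\kernel(\bar y)}{x}}.
\]
Writing $h\coloneqq\tilde f+\tfrac{\kernel-\j}{\lambda}$ and expanding the square in $\Eprox_{\lambda h}(\nabla\kernel(\bar y))=\argmin_w\set{h(w)+\tfrac{1}{2\lambda}\norm{w-\nabla\kernel(\bar y)}^2}$, the term $-\tfrac1\lambda\j(w)$ inside $h$ annihilates the $\tfrac{1}{2\lambda}\norm{w}^2$ coming from the square; after multiplying the objective by $\lambda$ one recovers exactly the argmin above. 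Since $\tilde f$ and $\kernel$ are both $+\infty$ off $\X$, the minimization over $\R^n$ agrees with that over $\X$, giving (i).

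Statement (ii) follows from the very same computation, this time retaining the constants discarded above. Reinstating $-\tfrac1\lambda\kernel(\bar y)+\tfrac1\lambda\innprod{\nabla\kernel(\bar y)}{\bar y}$ from $\D$ and the $\tfrac{1}{2\lambda}\norm{\nabla\kernel(\bar y)}^2$ produced by completing the square yields
\[
	\lambda\env{f}(\bar y)=\lambda\Eenv_{\lambda}(h)(\nabla\kernel(\bar y))+\innprod{\nabla\kernel(\bar y)}{\bar y}-\kernel(\bar y)-\j(\nabla\kernel(\bar y)).
\]
As $\bar y\in\Y=\interior\dom\kernel$ and $\kernel$ is differentiable there, the Fenchel--Young equality gives $\innprod{\nabla\kernel(\bar y)}{\bar y}-\kernel(\bar y)=\kernel*(\nabla\kernel(\bar y))$, so the last three terms collapse to $(\kernel*-\j)(\nabla\kernel(\bar y))$; dividing by $\lambda$ is exactly the asserted identity.

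For (iii), with $\kernel$ Legendre and 1-coercive I would match conjugate representations rather than re-run the envelope computation. Since $\kernel$ is then essentially smooth and 1-coercive, the first inequality in \cref{thm:hull} is an equality, i.e. $\lambda\hull_{\lambda}{f}=\conv*(\lambda f+\kernel)-\kernel$ on $\X$. Applying that same formula with the Euclidean kernel $\kernel=\j$ (which is Legendre and 1-coercive) gives the Euclidean identity $\lambda\Ehull_{\lambda}(h)=\conv*(\lambda h+\j)-\j$; and with $h=\tilde f+\tfrac{\kernel-\j}{\lambda}$ one has $\lambda h+\j=\lambda\tilde f+\kernel$, so $\conv*(\lambda h+\j)=\conv*(\lambda\tilde f+\kernel)$. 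Hence $\lambda\Ehull_{\lambda}(h)-(\kernel-\j)=\conv*(\lambda\tilde f+\kernel)-\kernel$, and restricting to $\X$, where $\conv*(\lambda\tilde f+\kernel)\restr_{\X}=\conv*(\lambda f+\kernel)$ by \cref{thm:cvxext}, reproduces the previous display; dividing by $\lambda$ yields (iii). I expect the only genuine obstacle to lie here, in certifying that \cref{thm:hull} holds with \emph{equality}: this is precisely where both Legendreness---furnishing the bijection $\func{\nabla\kernel}{\Y}{\interior\dom\kernel*}$ of \cref{thm:D*}, equivalently $\range\nabla\kernel=\interior\dom\kernel*$---and 1-coercivity---forcing $\dom\kernel*=\R^n$ and hence $\indicator_{\range\nabla\kernel}\equiv0$---are indispensable. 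Parts (i)--(ii), by contrast, are a pure completion of squares requiring no hypothesis on $\kernel$ beyond the standing one.
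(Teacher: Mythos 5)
Your proposal is correct and takes essentially the same route as the paper's own proof: parts (i)--(ii) rest on the identical completion-of-squares identity
\(
	\tilde f(x)+\tfrac{1}{\lambda}\D(x,\bar y)
=
	h(x)+\tfrac{1}{2\lambda}\norm{x-\nabla\kernel(\bar y)}^2+\tfrac{\kernel*-\j}{\lambda}(\nabla\kernel(\bar y))
\)
with \(h\coloneqq\tilde f+\tfrac{\kernel-\j}{\lambda}\) and the Fenchel--Young equality \(\innprod{\nabla\kernel(\bar y)}{\bar y}-\kernel(\bar y)=\kernel*(\nabla\kernel(\bar y))\), while part (iii) matches the biconjugate representations of the two hulls through \cref{thm:hull} (whose first inequality is an equality under essential smoothness and 1-coercivity), the observation \(\lambda h+\j=\lambda\tilde f+\kernel\), and \cref{thm:cvxext}. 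The paper's appendix proof performs the same computation, merely phrased as a single pointwise identity (then minimized) rather than as a comparison of argmins, and chains the biconjugates directly in place of your explicit invocation of the Euclidean case of \cref{thm:hull}.
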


	\section{The Bregman level proximal subdifferential}\label{sec:psubdiff}
		We henceforth pattern the analysis of \cite{wang2023every} for the Euclidean case to identify ``proximal'' subdifferentials that enable a resolvent representation of the proximal map.
To begin with, observe that \(\bar x\in\prox_{\lambda f}(\bar y)\) iff
\begin{subequations}\label{subeq:proxchar}%
	\begin{align*}
		f(x)
	\geq{} &
		f(\bar x)
		-
		\tfrac{1}{\lambda}\D(x,\bar y)
		+
		\tfrac{1}{\lambda}\D(\bar x,\bar y)
	\quad
		\forall x\in\X,
	\shortintertext{and as long as \(\bar x\in\interior\X\), with the three-point identity (\cref{thm:3p}) this inequality can be expressed as}
		f(x)
	\geq{} &
		f(\bar x)
		+
		\tfrac{1}{\lambda}
		\innprod{\nabla\kernel(\bar y)-\nabla\kernel(\bar x)}{x-\bar x}
		-
		\tfrac{1}{\lambda}\D(x,\bar x)
	\quad
		\forall x\in\X.
	\end{align*}
	This expression is reminiscent of a relatively weak convexity inequality as in \cite[Def. 3.3]{wang2024mirror}, where the vector \(\bar u\coloneqq\tfrac{1}{\lambda}[\nabla\kernel(\bar y)-\nabla\kernel(\bar x)]\) plays the role of a subgradient of \(f\) at \(\bar x\).
	In fact, it is not difficult to see that all vectors \(\bar u\) for which such global inequality holds form a subset of the Fréchet subdifferential of \(f\) at \(\bar x\); see \cref{thm:FPsubdiff} for the details.
	A similar computation for the right proximal map reveals that \(\bar y\in\prox*_{\lambda g}(\bar x)\) iff
	\[
		g(y)
	\geq
		g(\bar y)
		+
		\tfrac{1}{\lambda}
		\innprod{\bar x-\bar y}{\nabla\kernel(y)-\nabla\kernel(\bar y)}
		-
		\tfrac{1}{\lambda}
		\D(\bar y,y)
	\quad
		\forall y\in\Y,
	\]
	leading to the following definitions.
\end{subequations}

\begin{definition}[Bregman level proximal subdifferentials]\label{def:psubdiff}%
	The \emph{left (Bregman) \(\lambda\)-level proximal subdifferential} of a function \(\func{f}{\X}{\Rinf}\) is \(\ffunc{\psubdiff f}{\X}{\R^n}\), where \(\bar u\in\psubdiff f(\bar x)\) if
	\begin{equation}\label{eq:psubdiff}
		\bar x\in\interior\X
	\quad\text{and}\quad
		f(x)
		\geq
		f(\bar x)
		+
		\innprod{\bar u}{x-\bar x}
		-
		\tfrac{1}{\lambda}\D(x,\bar x)
	\quad
		\forall x\in\X.
	\end{equation}
	Similarly, the \emph{right (Bregman) \(\lambda\)-level proximal subdifferential} of \(\func{g}{\Y}{\Rinf}\) is \(\ffunc{\psubdiff*g}{\Y}{\R^n}\), where \(\bar v\in\psubdiff*g(\bar y)\) if
	\begin{equation}\label{eq:psubdiff*}
		g(y)
	\geq
		g(\bar y)
		+
		\innprod{\bar v}{\nabla\kernel(y)-\nabla\kernel(\bar y)}
		-
		\tfrac{1}{\lambda}
		\D(\bar y,y)
	\quad
		\forall y\in\Y.
	\end{equation}
\end{definition}

When \(\kernel=\j\), both \(\psubdiff f\) and \(\psubdiff* g\) reduce to the level proximal subdifferentials \(\Epsubdiff f\) and \(\Epsubdiff g\) originally introduced by Rockafellar in \cite[eq. (2.13)]{rockafellar2021characterizing} and further investigated by Wang and Wang \cite{wang2023every} and by Luo et al. \cite{luo2024level}.
Acute readers may notice that \(\psubdiff*g\), although referred to as a (level proximal) subdifferential of \(g\), does not effectively represent a subdifferential of \(g\).
However, as we shall soon see, \(\psubdiff*g\) is in fact a subdifferential of \(g\circ\nabla\kernel*\) when \(\kernel\) is Legendre; see \cref{thm:FPsubdiff:g}.
Although slightly abusive, we stick to this terminology to highlight the parallel with the \emph{left} counterpart and its role in characterizing the proximal map, as well as to keep consistency with the Euclidean case.

\subsection{Resolvent representation}
In this section, we investigate, among other things, the relations between Bregman level proximal subdifferentials and  Bregman proximal operators, which ultimately reveal that Bregman proximal operators are always (warped) resolvents of Bregman level proximal subdifferentials under a standard range assumption.
For a set-valued operator \(\ffunc{T}{X}{Y}\) and \(U\subseteq X\), we adopt a slight abuse of notation in interpreting the restriction of \(T\) on \(U\) as the operator \(\ffunc{T\restr_U}{X}{Y}\) (as opposed to \(\ffunc{T\restr_U}{U}{Y}\)) defined by
\[
	T\restr_U(u)
=
	\begin{cases}
		T(u) & \text{if }u\in U
	\\
		\emptyset & \text{otherwise.}
	\end{cases}
\]

\begin{lemma}
	For any set-valued operator \(\ffunc{T}{X}{Y}\) between two nonempty sets \(X\) and \(Y\), the following hold:
	\begin{enumerate}
	\item \label{thm:TU-1}%
		For any \(U\subseteq X\), one has that \(U\cap T^{-1}=T\restr_U^{-1}\).
	\item \label{thm:T-1V}%
		For any \(V\subseteq Y\), one has that \((V\cap T)^{-1}=T^{-1}\restr_V\).
	\end{enumerate}
\end{lemma}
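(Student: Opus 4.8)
The plan is to establish both identities by a direct pointwise computation, simply unwinding the definitions of the inverse, the restriction, and the value-wise intersection of a set-valued operator. Recall that the inverse satisfies $x\in T^{-1}(y)\Leftrightarrow y\in T(x)$, that restriction acts on the \emph{domain} through $T\restr_U(x)=T(x)$ for $x\in U$ and $T\restr_U(x)=\emptyset$ otherwise, and that intersecting a set with an operator acts on the \emph{values}, so that $(V\cap T)(x)=V\cap T(x)$ and likewise $(U\cap T^{-1})(y)=U\cap T^{-1}(y)$.

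For the first identity I would fix $y\in Y$ and expand each side. On the left, $(U\cap T^{-1})(y)=U\cap T^{-1}(y)=\set{x\in U}[y\in T(x)]$. On the right, $x\in T\restr_U^{-1}(y)$ holds iff $y\in T\restr_U(x)$, which by definition of the restriction is equivalent to $x\in U$ together with $y\in T(x)$; hence $T\restr_U^{-1}(y)=\set{x\in U}[y\in T(x)]$ as well. Since this holds for every $y\in Y$, the two operators agree.

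For the second identity I would again fix $y\in Y$. On the left, $x\in(V\cap T)^{-1}(y)$ iff $y\in(V\cap T)(x)=V\cap T(x)$, that is, iff both $y\in V$ and $y\in T(x)$. On the right, $T^{-1}\restr_V(y)$ equals $T^{-1}(y)$ when $y\in V$ and $\emptyset$ otherwise, so $x\in T^{-1}\restr_V(y)$ iff $y\in V$ and $y\in T(x)$. A case distinction on whether $y\in V$ shows the two descriptions coincide, and since $y$ was arbitrary the two operators agree.

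I expect no genuine difficulty here, as both claims are elementary set-theoretic bookkeeping; the only point deserving attention is to keep the notation straight, distinguishing restriction of the domain (as in $T\restr_U$ and $T^{-1}\restr_V$) from intersection of the values (as in $U\cap T^{-1}$ and $V\cap T$). Alternatively, one could argue entirely through graphs, using that passing to the inverse flips the graph while restriction and value-wise intersection amount to intersecting the graph with the product sets $U\times Y$ and $X\times V$, respectively; the two identities then follow because flipping commutes with these intersections. The pointwise verification above, however, is the most transparent route.
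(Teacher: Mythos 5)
Your proposal is correct and follows essentially the same route as the paper's own proof: a direct pointwise unwinding of the definitions of inverse, domain restriction, and value-wise intersection, verified by chains of membership equivalences. The graph-based alternative you mention at the end is a nice observation, but the core argument is identical to the paper's.
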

\begin{proof}
	In what follows, let \(\bar u\in U\) and \(\bar v\in V\) be fixed.
	\begin{itemize}
	\item ``\ref{thm:TU-1}''
		\(
			\bar u\in U\cap T^{-1}(\bar v)
		\Leftrightarrow
			\bar u\in U\) and \(\bar v\in T(\bar u)
		\Leftrightarrow
			\bar v\in T\restr_U(\bar u)
		\Leftrightarrow
			\bar u\in T\restr_U^{-1}(\bar v)
		\).

	\item ``\ref{thm:T-1V}''
		\(
			\bar u\in (V\cap T)^{-1}(\bar v)
		\Leftrightarrow
			\bar v\in V\) and \(\bar u\in T^{-1}(\bar v)
		\Leftrightarrow
			\bar u\in T^{-1}\restr_V(\bar v)
		\).
	\qedhere
	\end{itemize}
\end{proof}

\begin{fact}[{\cite[Lem. 3.3 and Thm. 3.6]{benoist1996what}}]\label{thm:subdiffconv}%
	Let \(\func{h}{\R^n}{\Rinf}\) be proper, lsc, and 1-coercive.
	Then, for every \(x\in\conv\dom h\) there exists a collection of points \(x_i\in\dom h\) and \(\alpha_i>0\), \(i\in I\), with \(\sum_{i\in I}\alpha_i=1\) such that
	\[
		x=\sum_{i\in I}\alpha_i x_i,
	\quad
		\conv h(x)=\sum_{i\in I}\alpha_ih(x_i),
	\quad\text{and}\quad
		\lsubdiff (\conv h)(x)=\bigcap_{i\in I}\fsubdiff h(x_i).
	\]
\end{fact}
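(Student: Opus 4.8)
The strategy is to first pass to the convex hull as a genuine convex function, then extract a finite representation via Carathéodory, and finally transport subgradients between \(h\) and \(\conv h\) at the representing points. Since \(h\) is proper, lsc, and 1-coercive, \cref{thm:fext}\ref{thm:cvxext} (applied with \(X=\R^n\), so that \(h=\tilde h\)) gives \(\conv h=\conv* h\); thus \(\conv h\) is proper, lsc, convex, and itself 1-coercive, so that \(\lsubdiff(\conv h)=\fsubdiff(\conv h)\) by convexity. Moreover, for \(x\in\conv\dom h\) any finite representation \(x=\sum_j\beta_j y_j\) with \(y_j\in\dom h\) already yields \(\conv h(x)\le\sum_j\beta_j h(y_j)<\infty\). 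Hence it suffices to produce the claimed decomposition and then compute \(\fsubdiff(\conv h)(x)\).

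Next I would establish the representation, which is the crux and the step I expect to be the main obstacle. Fixing \(x\in\conv\dom h\), the point \((x,\conv h(x))\) lies in \(\conv(\epi h)\); the role of 1-coercivity is precisely to rule out horizontal recession directions of \(\epi h\), so that \(\conv(\epi h)\) is closed and hence equals \(\epi(\conv h)\). Carathéodory's theorem in \(\R^{n+1}\) (applied at the boundary point \((x,\conv h(x))\)) then produces finitely many \((x_i,t_i)\in\epi h\) and weights \(\alpha_i>0\), \(i\in I\) with \(\abs{I}\le n+1\) and \(\sum_{i\in I}\alpha_i=1\), such that \(x=\sum_{i\in I}\alpha_i x_i\) and \(\conv h(x)=\sum_{i\in I}\alpha_i t_i\). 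Since \(t_i\ge h(x_i)\ge\conv h(x_i)\) while \(\sum_{i\in I}\alpha_i h(x_i)\ge\conv h(x)\) by the very definition of the hull, the chain
\[
	\conv h(x)=\sum_{i\in I}\alpha_i t_i\ge\sum_{i\in I}\alpha_i h(x_i)\ge\conv h(x)
\]
collapses to equalities, forcing \(t_i=h(x_i)\) and, crucially, \(h(x_i)=\conv h(x_i)=\conv* h(x_i)\in\R\) for every \(i\in I\). This is exactly the sought \(x=\sum_{i\in I}\alpha_i x_i\) with \(\conv h(x)=\sum_{i\in I}\alpha_i h(x_i)\).

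Finally I would identify the subdifferentials. For each \(i\in I\) the equality \(h(x_i)=\conv* h(x_i)\in\R\) is condition \ref{fact:fsubdiff:2} of \cref{fact:fsubdiff}, so \ref{fact:fsubdiff:3} gives \(\fsubdiff h(x_i)=\lsubdiff(\conv* h)(x_i)=\fsubdiff(\conv h)(x_i)\), the last step using \(\conv* h=\conv h\) and its convexity. It then remains to prove the purely convex identity \(\fsubdiff(\conv h)(x)=\bigcap_{i\in I}\fsubdiff(\conv h)(x_i)\). For ``\(\subseteq\)'', if \(u\in\fsubdiff(\conv h)(x)\) the inequalities \(\conv h(x_i)\ge\conv h(x)+\innprod{u}{x_i-x}\) average with weights \(\alpha_i\) to \(\sum_{i\in I}\alpha_i\conv h(x_i)\ge\conv h(x)\), which is an equality since \(\sum_{i\in I}\alpha_i\conv h(x_i)=\sum_{i\in I}\alpha_i h(x_i)=\conv h(x)\); hence each individual inequality is an equality, and substituting back propagates the subgradient inequality from \(x\) to each \(x_i\), i.e.\ \(u\in\fsubdiff(\conv h)(x_i)\). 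For ``\(\supseteq\)'', summing \(\conv h(z)\ge\conv h(x_i)+\innprod{u}{z-x_i}\) against \(\alpha_i\) and using \(\sum_{i\in I}\alpha_i x_i=x\) together with \(\sum_{i\in I}\alpha_i\conv h(x_i)=\conv h(x)\) returns \(u\in\fsubdiff(\conv h)(x)\). Combining this identity with \(\lsubdiff(\conv h)=\fsubdiff(\conv h)\) and the pointwise equalities \(\fsubdiff(\conv h)(x_i)=\fsubdiff h(x_i)\) yields \(\lsubdiff(\conv h)(x)=\bigcap_{i\in I}\fsubdiff h(x_i)\), as claimed.
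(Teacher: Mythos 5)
First, a structural remark: the paper itself does \emph{not} prove this statement---it is imported as a Fact, with proof deferred to \cite{benoist1996what}---so your proposal can only be assessed on its own merits, as a reconstruction of Benoist's argument. The second half of your reconstruction is correct and complete: the collapsing chain (together with Jensen's inequality for \(\conv h\), which you implicitly use to force \(h(x_i)=\conv h(x_i)=\conv*h(x_i)\in\R\)), the transport of subgradients \(\fsubdiff h(x_i)=\lsubdiff(\conv*h)(x_i)=\fsubdiff(\conv h)(x_i)\) via \cref{fact:fsubdiff}, both inclusions of the convex identity \(\fsubdiff(\conv h)(x)=\bigcap_{i\in I}\fsubdiff(\conv h)(x_i)\), and the final appeal to \(\lsubdiff(\conv h)=\fsubdiff(\conv h)\) for the proper, lsc, convex function \(\conv h=\conv*h\) are all sound.

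The genuine gap is exactly the step you flag as the crux, and your justification for it does not hold up. You assert that \(\conv(\epi h)\) is closed because 1-coercivity rules out horizontal horizon directions of \(\epi h\); but the underlying principle---a closed set whose horizon cone is pointed (or purely ``vertical'') has closed convex hull---is false. Consider \(C=\set{(0,0,0)}\cup\set{(t,1,t^2)}[t\in\R]\subseteq\R^3\): it is closed and its horizon cone is the single ray \(\R_+(0,0,1)\), yet
\[
	(0,0,1)
=
	\lim_{k\to\infty}\Bigl[\bigl(1-\tfrac1k\bigr)(0,0,0)+\tfrac1k\bigl(\sqrt k,1,k\bigr)\Bigr]
\]
lies in \(\closure\conv C\) but not in \(\conv C\), since any convex combination whose second coordinate vanishes must put zero weight on the parabola and hence equals the origin. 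What rescues the epigraph case is not the absence of horizontal horizon directions but the fact that \((0,1)\) is a recession direction at \emph{every} point of \(\epi h\), and turning that into closedness of \(\conv(\epi h)\) requires an actual argument: take a convergent sequence in \(\conv(\epi h)\), write each term as a Carath\'eodory combination of at most \(n+2\) points of \(\epi h\), use that \(h\) is bounded below to bound the products \(\alpha_i^k h(x_i^k)\), use 1-coercivity to show that \(\alpha_i^k x_i^k\to0\) along indices with \(\alpha_i^k\to0\) while \(x_i^k\) remains bounded along the others, and conclude by lower semicontinuity. This missing argument \emph{is} the content of \cite[Lem. 3.3]{benoist1996what}, i.e., the heart of the result; everything downstream of the closedness claim in your proposal is routine convex analysis. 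A related caveat: your preliminary appeal to \cref{thm:cvxext} for \(\conv h=\conv*h\) cannot plug the hole either, since in the paper that equality case is itself obtained by citing the same lemma of Benoist (and, in any event, it gives lower semicontinuity of the hull but not attainment of the infimum defining \(\conv h(x)\), which is what your decomposition needs).
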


Below is the first main result of this section.

\begin{theorem}\label{thm:FPsubdiff}%
	For any \(\lambda>0\), \(\func{f}{\X}{\Rinf}\) and \(\func{g}{\Y}{\Rinf}\) the following hold:
	\begin{enumerate}
	\item \label{thm:FPsubdiff:f}%
		\(
			\psubdiff f
		=
			\bigl[\fsubdiff(f+\lambda^{-1}\kernel)\bigr]\restr_{\interior\X}-\lambda^{-1}\nabla\kernel
		\subseteq
			\rsubdiff f
		\).\footnote{%
			The inclusion \(\psubdiff f\subseteq\rsubdiff f\) is meant in the sense that \(\graph\psubdiff f\subseteq\graph\rsubdiff f\), or, equivalently, that \(\psubdiff f(x)\subseteq\rsubdiff f(x)\) holds for any \(x\in\interior\X\).
			Similar conventions are adopted throughout.
		}%
	\item \label{thm:Jfsubdiff}%
		\(
			\prox_{\lambda f}
		=
			\fsubdiff(\kernel+\lambda f)^{-1}\circ\nabla\kernel
		\).
	\item \label{thm:prox-J}%
		\(
			\interior\X\cap\prox_{\lambda f}
		=
			\bigl(\nabla\kernel+\lambda\psubdiff f\bigr)^{-1}\circ\nabla\kernel
		\).
	\item \label{thm:prox*-J}%
		\(
			\prox*_{\lambda g}=
			\bigl(\id+\lambda\psubdiff*g\bigr)^{-1}\restr_\X
		\).
	\end{enumerate}
	If \(\kernel\) is Legendre, then the following also hold:
	\begin{enumerate}[resume*]
	\item \label{thm:FPsubdiff:g}%
		\(
			\psubdiff*g
		=
			\fsubdiff(g\circ\nabla\kernel*+\lambda^{-1}\kernel*)-\lambda^{-1}\nabla\kernel*
		\subseteq
			\rsubdiff[g\circ\nabla\kernel*]
		\).
	\item \label{thm:Jfsubdiff*}%
		\(
			\prox*_{\lambda g}
		=
			\fsubdiff(\kernel*+\lambda g\circ\nabla\kernel*)^{-1}
		\).
	\item \label{thm:monotone}%
		\(
			\prox_{\lambda f}\circ\nabla\kernel*
		=
			\fsubdiff(\kernel+\lambda f)^{-1}\restr_{\Y*}
		\)
		and \(\prox*_{\lambda g}\) are monotone operators.
	\item \label{thm:lsubdiffenv}%
		\(
			\prox_{\lambda f}\circ\nabla\kernel*
		\subseteq
			\lsubdiff(\kernel+\lambda\tilde f)^*\restr_{\Y*}
		=
			\nabla\kernel*
			+
			\lambda\lsubdiff[-\env{f}\circ\nabla\kernel*]
		\).
		When \(\kernel+\tilde\lambda f\) is lsc and convex, the inclusion holds as equality.
	\end{enumerate}
	If, additionally, \(\kernel\) is 1-coercive, then the following also holds:
	\begin{enumerate}[resume]
	\item \label{thm:proxhull}%
		\(
			\conv\prox_{\lambda f}\circ\nabla\kernel*
		=
			\prox_{\lambda \hull_\lambda{f}}\circ\nabla\kernel*
		\).
	\end{enumerate}
\end{theorem}
\begin{proof}
	We start with the first four claims without extra assumptions on \(\kernel\).
	\begin{itemize}
	\item ``\ref{thm:FPsubdiff:f}''
		The claimed identity holds trivially outside \(\interior\X\).
		Let \(\bar x\in\interior\X\) be fixed.
		After expanding \(\D(x,\bar x)\), the inequality in \eqref{eq:psubdiff} can be cast as
		\[
			[f+\lambda^{-1}\kernel](x)
			\geq
			[f+\lambda^{-1}\kernel](\bar x)
			+
			\innprod{\bar u+\lambda^{-1}\nabla\kernel(\bar x)}{x-\bar x}
		\quad
			\forall x\in\X.
		\]
		This shows that \(\bar u\in\psubdiff f(\bar x)\) iff \(\bar u+\lambda^{-1}\nabla\kernel(\bar x)\in\fsubdiff[f+\lambda^{-1}\kernel](\bar x)\), which after adding \(\nabla\kernel(\bar x)\) (well defined, since \(\bar x\in\interior\X\)) on both sides leads to the claimed identity.
		The inclusion \(\psubdiff f(\bar x)\subseteq\rsubdiff f(\bar x)\) is trivial if \(\bar x\notin\interior\X\), since the left-hand side is empty in this case.
		When \(\bar x\in\interior\X\), it follows from \eqref{eq:psubdiff} together with the fact that
		\(
			\D(x,\bar x)=o\bigl(\norm{x-\bar x}\bigr)
		\)
		as \(x\to\bar x\).
		\item ``\ref{thm:Jfsubdiff}''
		Noticing that
		\(
			\prox_{\lambda f}(\bar y)
		=
			\argmin\set{
				\lambda f+\kernel-\innprod{\nabla\kernel(\bar y)}{{}\cdot{}}
			}
		\),
		the claim follows from the fact that
		\(
			\argmin\set{\psi-\innprod{u}{{}\cdot{}}}
		=
			\bigl[\fsubdiff \psi\bigr]^{-1}(u)
		\)
		holds for any \(\func{\psi}{\X}{\Rinf}\) and \(u\in\R^n\).

	\item ``\ref{thm:prox-J}''
		Assertion \ref{thm:Jfsubdiff} in combination with \cref{thm:TU-1} implies that
		\(
			\interior\X\cap\prox_{\lambda f}
		=
			\bigl(\bigl[\fsubdiff(\kernel+\lambda f)\bigr]\restr_{\interior\X}\bigr)^{-1}\circ\nabla\kernel
		\).
		In turn, the identity in assertion \ref{thm:FPsubdiff:f} yields the claim.

	\item ``\ref{thm:prox*-J}''
		For any \(\bar x\in\X\) we have \(\bar y\in\prox*_{\lambda g}(\bar x)\) iff
		\begin{align*}
			\forall y\in\Y
		\quad
			g(y)
		\geq{} &
			g(\bar y)
			+
			\tfrac{1}{\lambda}
			\D(\bar x,\bar y)
			-
			\tfrac{1}{\lambda}
			\D(\bar x,y),
		\shortintertext{%
			which using the three-point identity of \cref{thm:3p} expands to
		}
			\forall y\in\Y
		\quad
			g(y)
		\geq{} &
			g(\bar y)
			-
			\tfrac{1}{\lambda}
			\innprod{\bar x-\bar y}{\nabla\kernel(\bar y)-\nabla\kernel(y)}
			-
			\tfrac{1}{\lambda}
			\D(\bar y,y).
		\end{align*}
		This shows that
		\begin{align*}
			\bar y\in\prox*_{\lambda g}(\bar x)
		\Leftrightarrow{} &
			\bar x\in\X
			\text{ and }
			\tfrac{\bar x-\bar y}{\lambda}\in\psubdiff*g(\bar y)
		\\
		\Leftrightarrow{} &
			\bar x\in\X
			\text{ and }
			\bar y\in(\id+\lambda\psubdiff*g)^{-1}(\bar x).
		\end{align*}
		Invoking \cref{thm:T-1V} results in the claimed expression.
	\end{itemize}

	\noindent
	In what follows, suppose that \(\kernel\) is Legendre.
	\begin{itemize}
	\item ``\ref{thm:FPsubdiff:g}''
		By appealing to \cref{thm:D*} and denoting \(\eta=\nabla\kernel(y)\) and \(\bar\eta=\nabla\kernel(\bar y)\), the inequality in \eqref{eq:psubdiff*} reads
		\[
			g\circ\nabla\kernel*(\eta)
		\geq
			g\circ\nabla\kernel*(\bar\eta)
			+
			\innprod{\bar v}{\eta-\bar\eta}
			-
			\tfrac{1}{\lambda}
			\D*(\eta, \bar\eta)
		\quad
			\forall\eta\in\Y*,
		\]
		and the same conclusion follows.

	\item ``\ref{thm:Jfsubdiff*}''
		Similarly as in the proof of assertion \ref{thm:Jfsubdiff}, the claim follows by observing that
		\begin{align*}
			\prox*_{\lambda g}(\bar x)
		={} &
			\argmin_{y\in\Y}\set{
				\lambda g(y)-\kernel(y)-\innprod{\nabla\kernel(y)}{\bar x-y}
			}
		\\
		={} &
			\argmin_{y\in\Y}\set{
				\lambda g(y)+\kernel*(\nabla\kernel(y))-\innprod{\nabla\kernel(y)}{\bar x}
			}
		\\
		={} &
			\argmin_{\eta\in\Y*}\set{
				\lambda g\circ\nabla\kernel*(\eta)+\kernel*(\eta)-\innprod{\eta}{\bar x}
			}.
		\end{align*}

	\item ``\ref{thm:monotone}''
		The identity follows from assertion \ref{thm:Jfsubdiff}.
		Combined with the expression in \ref{thm:Jfsubdiff*}, the claims on monotonicity owe to the fact that \(\fsubdiff h\) is a monotone operator for any function \(h\), and as such so is its inverse.

	\item ``\ref{thm:lsubdiffenv}''
		For any \(\eta\in\Y*\), it follows from assertion \ref{thm:monotone} together with \cref{fact:fsubdiff} that
		having \(x\in\prox_{\lambda f}\circ\nabla\kernel*(\eta)\) amounts to \(\fsubdiff(\lambda\tilde f+\kernel)(x)\neq\emptyset\) and \(\eta\in\fsubdiff\conv*(\lambda\tilde f+\kernel)(x)\), this latter inclusion being equivalent to \(x\in\lsubdiff(\lambda\tilde f+\kernel)^*(\eta)\).
		In turn, the subdifferential inclusion follows from \cref{thm:envconj} and the calculus rule of \cite[Ex. 8.10]{rockafellar1998variational}.
		When \(\lambda\tilde f+\kernel\) is lsc and convex, one has that
		\[
			\fsubdiff(\lambda f+\kernel)^{-1}
		=
			\fsubdiff(\lambda\tilde f+\kernel)^{-1}
		=
			\lsubdiff(\lambda\tilde f+\kernel)^*
		\]
		and the claimed identity thus follows from assertion \ref{thm:monotone}.
	\end{itemize}

	\noindent
	To conclude, suppose that \(\kernel\) is both Legendre and 1-coercive.
	\begin{itemize}
	\item ``\ref{thm:proxhull}''
		In order to simplify the notation, let \(h\coloneqq\hull_\lambda{f}\).
		It follows from \cref{thm:hull} that \(\lambda h+\kernel=\conv*(\lambda f+\kernel)\) is convex, to which \cite[Prop. 2.10(ii)]{wang2022bregman} entails
		\begin{equation}\label{eq:proxhull}
			\prox_{\lambda h}\circ\nabla\kernel*
		=
			\bigl[\lsubdiff(\lambda h+\kernel)\bigr]^{-1}
		=
			\bigl[\lsubdiff\conv*(\lambda f+\kernel)\bigr]^{-1}.
		\end{equation}
		We proceed to prove the claimed identity:

		\begin{itemize}
		\item ``\(\subseteq\)''
			Let \(\bar x\in\prox_{\lambda f}\circ\nabla\kernel*(\bar\eta)\) be fixed, so that
			\(
				\bar\eta
			\in
				\fsubdiff(\lambda f+\kernel)(\bar x)
			=
				\lsubdiff\conv*(\lambda f+\kernel)(\bar x)
			\),
			where the inclusion follows from assertion \ref{thm:Jfsubdiff} and the identity, in turn, from \cref{fact:fsubdiff}.
			Equivalently,
			\(
				\bar x
			\in
				\bigl[\lsubdiff\conv*(\lambda f+\kernel)\bigr]^{-1}(\bar\eta)
			=
				\prox_{\lambda h}\circ\nabla\kernel*(\bar\eta)
			\)
			by \eqref{eq:proxhull}.
			From the arbitrariness of \(\bar x\) and \(\bar\eta\) we conclude that
			\[
				\conv\prox_{\lambda f}\circ\nabla\kernel*(\bar\eta)
			\subseteq
				\conv\prox_{\lambda h}\circ\nabla\kernel*(\bar\eta).
			\]
			Since \(\lambda h+\kernel\) is convex, \cite[Prop. 2.10(ii)]{wang2022bregman} yields that \(\prox_{\lambda h}\circ\nabla\kernel*(\bar\eta)\) is convex, hence that the convex hull in the right-hand side of the inclusion above is superfluous.

		\item ``\(\supseteq\)''
			Consider now \(\bar x\in\prox_{\lambda h}\circ\nabla\kernel*(\bar\eta)\), so that \(\bar\eta\in\lsubdiff\conv*(\lambda f+\kernel)(\bar x)\) by \eqref{eq:proxhull}.
			Notice that \(\lambda\tilde f+\kernel\) too is 1-coercive by \cref{thm:extlsc:lsc}, where \(\tilde f\) is as in \cref{def:fext}, so that \cref{thm:subdiffconv} ensures that \(\bar x\) is a convex combination of points \(\bar x_i\), \(i\in I\), with
			\[
				\bar\eta
				\in
				\bigcap_{i\in I}\fsubdiff(\lambda\tilde f+\kernel)(\bar x_i)
			~~\Leftrightarrow~~
				\bar x_i\in \bigl[\oldpartial_F(\lambda f+\kernel)\bigr]^{-1}(\bar\eta)
				=
				\prox_{\lambda f}\circ\nabla\kernel*(\bar\eta)
				~~
				\forall i\in I.
			\]
			This shows that \(\bar x\in\conv \prox_{\lambda f}\circ\nabla\kernel*(\bar\eta)\), and the sought inclusion follows from the arbitrariness of \(\bar x\) and \(\bar\eta\).
		\qedhere
		\end{itemize}
	\end{itemize}
\end{proof}

\Cref{thm:prox-J} suggests that the following assumption should be in place in order for the intersections with \(\interior\X\) in the formulas therein be superfluous.
\begin{assumption}\label{ass:range}%
	\(\range\prox_{\lambda f}\subseteq\interior\dom\kernel\).
\end{assumption}

\begin{remark}\label{rem:range}%
	This assumption is ubiquitous for ensuring well-defined Bregman minimization algorithms, as it allows the output of one proximal operation to be used as input for the next iteration.
	We mention some simple known sufficient conditions ensuring its validity:
	\begin{enumerate}
	\item
		When \(f\) is convex and \(\dom f\cap\interior\X\neq\emptyset\), this requirement is automatically guaranteed by standard subdifferential calculus arguments.

	\item
		More generally, under essential smoothness of \(\kernel\) a sufficient condition ensuring \cref{ass:range} is the following constraint qualification on the boundary:
		\[
			\hsubdiff f(x)\cap\bigl(-\hsubdiff\kernel(x)\bigr)=\set{0}
		\quad\forall x\in\dom f\cap\boundary\X,
		\]
		where \(\hsubdiff\) denotes the \emph{horizon subdifferential} \cite[Def. 8.3(c)]{rockafellar1998variational}.
		Indeed, by \cite[Cor. 10.9]{rockafellar1998variational} this condition enables the calculus rule \(\lsubdiff(\kernel+\lambda f)(x)\subseteq\lsubdiff\kernel(x)+\lambda\lsubdiff f(x)=\emptyset\) at any boundary point \(x\), this set being empty by essential smoothness, thus preventing \(x\) from being a minimizer in the proximal minimization subproblem \eqref{eq:prox}.

	\item
			Still under essential smoothness of \(\kernel\), a simpler geometrical condition that only involves \(f\) is the existence, for any \(x\in\dom f\cap\boundary\X\), of a \emph{strictly feasible direction}\footnote{%
				With strictly feasible direction at \(x\) we mean any \(v\in\R^n\) such that \(x+\tau v\in\interior\X\) for \(\tau>0\) small enough.
				By convexity, up to positive scalings these are precisely those of the form \(v=\frac{x'-x}{\norm{x'-x}}\) with \(x'\in\interior\X\).
			}
			\(v\in\R^n\) along which the directional derivative (either does not exist or) is not \(+\infty\).
			Indeed, this guarantees through \cite[Lem. 26.2]{rockafellar1970convex} that \(\kernel+\lambda f\) has a downward vertical slope at \(x\) (more precisely, a negative infinite lower Dini derivative) along \(v\), which forces \(\rsubdiff(\kernel+\lambda f)(x)\) to be empty.
	\end{enumerate}
\end{remark}

Rather than algorithmic implications, our interest lies in the anticipated \emph{full resolvent characterization} that emerges from the assumption.
This characterization is expressed in terms of the newly introduced \emph{level proximal subdifferentials}, which generalize the role played by the Fenchel subdifferential for convex functions.
\begin{corollary}\label{cor:resolvent}
	For any \(\lambda>0\), \(\func{f}{\X}{\Rinf}\) and \(\func{g}{\Y}{\Rinf}\) the following hold:
	\begin{enumerate}
	\item \label{thm:warped}%
		\emph{Warped resolvent representation of \(\prox_{\lambda f}\):}
		if \cref{ass:range} holds, then
		\begin{equation}
			\prox_{\lambda f}=(\nabla\kernel+\lambda\psubdiff f)^{-1}\circ\nabla\kernel.
		\end{equation}

	\item
		\emph{Resolvent representation of \(\prox*_{\lambda g}\):}
		if \(\X=\R^n\), then
		\begin{equation}
			\prox*_{\lambda g}=(\id+\lambda\psubdiff*g)^{-1}.
		\end{equation}
	\end{enumerate}
\end{corollary}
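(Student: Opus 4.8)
The plan is to read both identities off \cref{thm:FPsubdiff}, observing that each hypothesis is exactly what is needed to discard the set-intersection (resp.\ operator restriction) appearing in the general formulas proved there.

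For claim~(i), I would begin from
\[
	\interior\X\cap\prox_{\lambda f}=(\nabla\kernel+\lambda\psubdiff f)^{-1}\circ\nabla\kernel,
\]
which is \cref{thm:prox-J} and holds with no assumption beyond the standing ones. It then remains only to verify that \cref{ass:range} turns the left-hand side into $\prox_{\lambda f}$. Since $\X=\dom\kernel$ we have $\interior\X=\interior\dom\kernel$, so \cref{ass:range} says precisely $\range\prox_{\lambda f}\subseteq\interior\X$; equivalently, $\prox_{\lambda f}(\bar y)\subseteq\interior\X$ for every $\bar y\in\Y$. Consequently the image $\prox_{\lambda f}(\bar y)$ is unaffected by intersecting with $\interior\X$, i.e.\ $\interior\X\cap\prox_{\lambda f}=\prox_{\lambda f}$, and the warped resolvent representation follows at once.

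For claim~(ii), I would start from the identity $\prox*_{\lambda g}=(\id+\lambda\psubdiff*g)^{-1}\restr_\X$ of \cref{thm:prox*-J}. The restriction $\restr_\X$ cuts the domain of $(\id+\lambda\psubdiff*g)^{-1}$ --- an operator whose ambient domain space is $\R^n$ --- down to $\X=\dom\kernel$. Under the hypothesis $\X=\R^n$ this restriction is to the full space and is therefore vacuous, which immediately yields $\prox*_{\lambda g}=(\id+\lambda\psubdiff*g)^{-1}$.

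Because both parts are direct specializations of \cref{thm:FPsubdiff}, there is essentially no genuine obstacle; the only point demanding a little care is the bookkeeping of the intersection/restriction notation together with the identification $\interior\X=\interior\dom\kernel$, which is exactly what makes \cref{ass:range} the precise condition removing the intersection in~(i).
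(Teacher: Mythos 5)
Your proposal is correct and coincides with the paper's own (implicit) argument: the corollary is presented there as an immediate specialization of \cref{thm:prox-J,thm:prox*-J}, the surrounding text explicitly noting that \cref{ass:range} is exactly what makes the intersection with \(\interior\X\) superfluous, while \(\X=\R^n\) trivializes the restriction \(\restr_\X\). Nothing is missing.
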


We next show that left and right level proximal subdifferentials are mutually related in a similar fashion as the proximal mappings when \(\kernel\) is of Legendre type.

\begin{lemma}
	Let \(\kernel\) be of Legendre type.
	Then, for any \(\func{g}{\Y}{\Rinf}\) one has
	\begin{align*}
		\psubdiff*g
	={} &
		\psubdiff^{\kernel*}G\circ\nabla\kernel,
	\shortintertext{%
		where \(\func{G}{\X*}{\Rinf}\) equals \(g\circ\nabla\kernel*\) on \(\interior\X*\) and \(\infty\) elsewhere,\footnotemark{} while for \(\func{f}{\X}{\Rinf}\) one has
	}
		\psubdiff f
	\subseteq{} &
		\psubdiff*^{\kernel*}F\circ\nabla\kernel,
	\end{align*}
	\footnotetext{%
		Defining \(G\) in this way, rather than simply using \(g\circ\nabla\kernel*\), is a minor technicality.
		It arises from the fact that the left-level proximal subdifferential is defined for functions on the \emph{domain} of the dgf, rather than merely its \emph{interior}.%
	}%
	where \(\func{F\coloneqq f\circ\nabla\kernel*}{\Y*}{\Rinf}\).
	A sufficient condition for the inclusion to hold everywhere as equality is continuity of \(\kernel\) on its domain together with the existence, for any \(x\in\X\setminus\interior\X\), of a sequence \(\interior\X\ni x^k\to x\) such that
	\begin{equation}\label{eq:subdiff*:CQ}
		f(x)
	=
		\lim_{k\to\infty}f(x^k).
	\end{equation}
\end{lemma}
\begin{proof}
	We begin by proving the claim for \(f\).
	We have
	\begin{align*}
		\bar u\in\psubdiff f(\bar x)
	\quad\defeq*[\Rightarrow]\quad &
			f(x)
		\geq
			f(\bar x)
			+
			\innprod{\bar u}{x-\bar x}
			-
			\tfrac{1}{\lambda}\D(x,\bar x)
		\quad
			\forall x\in\interior\X
	\\
	&
		\text{(and \(\bar x\in\interior\X\))}
	\\
	\quad\Leftrightarrow\quad &
			F(\xi)
		\geq
			F(\bar\xi)
			+
			\innprod{\bar u}{\nabla\kernel*(\xi)-\nabla\kernel*(\bar\xi)}
			-
			\tfrac{1}{\lambda}\D*(\bar\xi,\xi)
		~~
			\forall\xi\in\Y*
	\\
	\defeq*[\Leftrightarrow]\quad &
			\bar u\in\psubdiff*^{\kernel*}F(\bar\xi),
	\end{align*}
	where the first equivalence follows from \cref{thm:D*} with \(\xi\coloneqq\nabla\kernel(x)\), \(\eta\coloneqq\nabla\kernel(y)\), and \(\bar\xi\) and \(\bar\eta\) defined similarly.
	Under continuity of \(\kernel\) on its domain and the condition in \eqref{eq:subdiff*:CQ}, the first inequality above can be extended to all \(x\in\X\) (so that the implication therein can be reversed).
	Indeed, for any \(x\in\X\setminus\interior\X\) there exists a sequence \(\interior\X\ni x^k\to x\) such that \((f-\lambda^{-1}\kernel)(x^k)\to(f-\lambda^{-1}\kernel)(x)\), and a limiting argument in the inequality yields the claim.

	We now turn to the claim for \(g\).
	Adopting the same notation for \(\xi,\eta,\bar\xi,\bar\eta\) as above, we have
	\begin{align*}
		\bar v\in\psubdiff*g(\bar y)
	\quad\defeq*[\Leftrightarrow]\quad &
			g(y)
		\geq
			g(\bar y)
			+
			\innprod{\bar v}{\nabla\kernel(y)-\nabla\kernel(\bar y)}
			-
			\tfrac{1}{\lambda}\D(\bar y,y)
		\quad
			\forall y\in\Y
	\\
	\quad\Leftrightarrow\quad &
			G(\eta)
		\geq
			G(\bar\eta)
			+
			\innprod{\bar v}{\eta-\bar\eta}
			-
			\tfrac{1}{\lambda}\D*(\eta,\bar\eta)
		\quad
			\forall\eta\in\interior\X*
	\\
	\quad\Leftrightarrow\quad &
			G(\eta)
		\geq
			G(\bar\eta)
			+
			\innprod{\bar v}{\eta-\bar\eta}
			-
			\tfrac{1}{\lambda}\D*(\eta,\bar\eta)
		\quad
			\forall\eta\in\X*,
	\end{align*}
	where the second equivalence follows from \cref{thm:D*}, and the last one from the fact that \(\dom G\subseteq\interior\X*\).
	By definition, this means that \(\bar v\in\psubdiff^{\kernel*}G(\bar\eta)\), yielding the claim.
\end{proof}

\begin{remark}\label{rem:CQ:mathringf}%
	When \(f\) is lsc, the boundary qualification \eqref{eq:subdiff*:CQ} indicates that \(f\) is fully characterized by its value onto \(\interior\X\).
	Indeed, this condition can be expressed as \(f=\closure\mathring f\), where \(\func{\mathring f}{\X}{\Rinf}\) is given by \(\mathring f(x)=f(x)\) for \(x\in\interior\X\), and \(\infty\) otherwise.
\end{remark}

\subsection{Pointwise characterizations}
The goal of this section is to characterize the Bregman level proximal subdifferential, beginning with a result concerning the fundamental question: \textit{when does \(\psubdiff f\) exist?}
The definition \eqref{eq:psubdiff} is always an option, but it may be difficult to verify.
We now present equivalent descriptions for \(\psubdiff f\).
Akin to classic subdifferentials, which are often characterized by the behavior of minorants with more favorable properties,
the minorant \(\hull_\lambda{f}\) determines \(\psubdiff f\) completely.

\begin{theorem}\label{thm:psubdiffchar}%
	Suppose that \(\kernel\) is 1-coercive and Legendre, and let \(\func{f}{\X}{\Rinf}\) be proper, lsc, and \(\kernel\)-prox-bounded.
	Then, for any \(\lambda\in(0,\pb)\), \(\bar x\in\interior\X\), \(\bar u\in\R^n\), and denoting \(\bar y\coloneqq\nabla\kernel^*(\lambda\bar u+\nabla \kernel(\bar x))\) the following are equivalent:
	\begin{enumerateq}
	\item \label{thm:psubdiffchar:nonempty}%
		\(\bar u\in \psubdiff f(\bar x)\).
	\item \label{thm:psubdiffchar:prox}%
		\(\bar x\in \prox_{\lambda f}(\bar y)\).
	\item
		\label{thm:psubdiffchar:env}%
		\(\frac{\bar x-\bar y}{\lambda}\in\psubdiff* (-\env{f})(\bar y)\) and \(\bar x\in\dom\psubdiff f\).
	\item \label{thm:psubdiffchar:prox*}%
		\(\bar y\in \prox*_{-\lambda\env{f}}(\bar x)\) and \(\bar x\in\dom\psubdiff f\).
	\item \label{thm:psubdiffchar:proxhull}%
		\(\bar x\in \prox_{\lambda\hull_\lambda{f}}(\bar y)\) and \(\hull_\lambda{f}(\bar x)=f(\bar x)\).
	\item
		\label{thm:psubdiffchar:hull}%
		\(\bar u\in\lsubdiff\hull_\lambda{f}(\bar x)\)
		and
		\(\hull_\lambda{f}(\bar x)=f(\bar x)\).
	\item
		\label{thm:psubdiffchar:chull}%
		\(\nabla \kernel(\bar y)\in\lsubdiff\conv(\lambda f+\kernel)(\bar x)\)
		and
		\(\conv(\lambda f+\kernel)(\bar x)=(\lambda f+\kernel)(\bar x)\).
	\end{enumerateq}
\end{theorem}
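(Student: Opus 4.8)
The plan is to exploit the invertibility of \(\nabla\kernel\). Since \(\kernel\) is \(1\)-coercive we have \(\dom\kernel*=\R^n\), so \(\bar y=\nabla\kernel*(\lambda\bar u+\nabla\kernel(\bar x))\) is well defined and satisfies \(\nabla\kernel(\bar y)=\lambda\bar u+\nabla\kernel(\bar x)\), equivalently \(\bar u=\tfrac1\lambda[\nabla\kernel(\bar y)-\nabla\kernel(\bar x)]\). With this identification the equivalence (a)\(\Leftrightarrow\)(b) is immediate from \eqref{eq:psubdiff}: substituting \(\bar u\) and expanding \(\D(x,\bar y)\) via the three-point identity (\cref{thm:3p}) rewrites the defining inequality of \(\bar u\in\psubdiff f(\bar x)\) as \(f(x)+\tfrac1\lambda\D(x,\bar y)\ge f(\bar x)+\tfrac1\lambda\D(\bar x,\bar y)\) for all \(x\in\X\); since \(\bar x\in\interior\X\subseteq\X\), this says exactly that \(\bar x\) attains the infimum defining \(\prox_{\lambda f}(\bar y)\).

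I then treat (g) as a hub for the ``convexified'' conditions (e), (f), (g). Under the standing hypotheses, \cref{thm:hull} gives \(\lambda\hull_{\lambda}{f}+\kernel=\conv*(\lambda f+\kernel)\); moreover \(1\)-coercivity of \(\kernel\) makes \(\lambda\tilde f+\kernel\) \(1\)-coercive (\cref{thm:extlsc}), so \cref{thm:cvxext} yields \(\conv*(\lambda f+\kernel)=\conv(\lambda f+\kernel)\). Hence \(\kernel+\lambda\hull_{\lambda}{f}=\conv(\lambda f+\kernel)\) is proper, lsc and convex. For (b)\(\Leftrightarrow\)(g) I read \(\prox_{\lambda f}\) through \cref{thm:Jfsubdiff}: \(\bar x\in\prox_{\lambda f}(\bar y)\) iff \(\nabla\kernel(\bar y)\in\fsubdiff(\lambda f+\kernel)(\bar x)\). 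When this set is nonempty, \cref{fact:fsubdiff} turns the membership into the value equality \((\lambda f+\kernel)(\bar x)=\conv(\lambda f+\kernel)(\bar x)\) together with \(\fsubdiff(\lambda f+\kernel)(\bar x)=\lsubdiff\conv(\lambda f+\kernel)(\bar x)\), which is precisely (g); conversely, a convex subgradient of \(\conv(\lambda f+\kernel)\) at a point where it agrees with \(\lambda f+\kernel\) is automatically a Fenchel subgradient of \(\lambda f+\kernel\) there, giving back (b). The links (g)\(\Leftrightarrow\)(f) and (g)\(\Leftrightarrow\)(e) now follow from \(\kernel+\lambda\hull_{\lambda}{f}=\conv(\lambda f+\kernel)\): the value conditions \(\hull_{\lambda}{f}(\bar x)=f(\bar x)\) and \(\conv(\lambda f+\kernel)(\bar x)=(\lambda f+\kernel)(\bar x)\) coincide; the smooth sum rule at \(\bar x\in\interior\X\) (where \(\kernel\) is \(C^1\)) gives \(\lambda\lsubdiff\hull_{\lambda}{f}(\bar x)=\lsubdiff\conv(\lambda f+\kernel)(\bar x)-\nabla\kernel(\bar x)\), so that \(\bar u\in\lsubdiff\hull_{\lambda}{f}(\bar x)\) is equivalent to \(\nabla\kernel(\bar y)\in\lsubdiff\conv(\lambda f+\kernel)(\bar x)\); and applying \cref{thm:Jfsubdiff} to the convex \(\hull_{\lambda}{f}\) (so that \(\fsubdiff=\lsubdiff\)) identifies \(\bar x\in\prox_{\lambda\hull_{\lambda}{f}}(\bar y)\) with the same inclusion.

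For the envelope-duality pair, (c)\(\Leftrightarrow\)(d) is just the right-resolvent representation \cref{thm:prox*-J} applied to \(g=-\env{f}\) (finite and continuous on \(\Y\) by \cref{thm:mainprop}): \(\bar y\in\prox*_{-\lambda\env{f}}(\bar x)\) iff \(\tfrac1\lambda(\bar x-\bar y)\in\psubdiff*(-\env{f})(\bar y)\), the side condition \(\bar x\in\dom\psubdiff f\) being shared by both. To connect this pair with (b) I use that, by the very definition of the proximal hull, \(-\hull_{\lambda}{f}(\bar x)=\inf_{y\in\Y}\{-\env{f}(y)+\tfrac1\lambda\D(\bar x,y)\}\), while the trivial bound \(\env{f}(y)\le f(\bar x)+\tfrac1\lambda\D(\bar x,y)\) shows the map \(y\mapsto-\env{f}(y)+\tfrac1\lambda\D(\bar x,y)\) is minorized by \(-f(\bar x)\) and attains this value at \(y=\bar y\) exactly when \(\bar x\in\prox_{\lambda f}(\bar y)\). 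Consequently, once \(\hull_{\lambda}{f}(\bar x)=f(\bar x)\) is known, membership of \(\bar y\) in the set of minimizers---that is, (d)---is equivalent to (b). The required value equality is delivered by the side condition: \(\bar x\in\dom\psubdiff f\) furnishes, through (a)\(\Leftrightarrow\)(b)\(\Leftrightarrow\)(e), a point of coincidence and hence forces \(\hull_{\lambda}{f}(\bar x)=f(\bar x)\).

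I expect the main difficulty to be the bookkeeping of the side condition \(\bar x\in\dom\psubdiff f\) in (c) and (d). Dropping it, a minimizer \(\bar y\) of \(y\mapsto-\env{f}(y)+\tfrac1\lambda\D(\bar x,y)\) only yields \(\env{f}(\bar y)=\hull_{\lambda}{f}(\bar x)+\tfrac1\lambda\D(\bar x,\bar y)\), which recovers (b) solely at points where \(f\) and its proximal hull agree; pinning down that \(\dom\psubdiff f\) is exactly the locus enforcing this coincidence, and invoking this fact only after (a)\(\Leftrightarrow\)(b)\(\Leftrightarrow\)(e)\(\Leftrightarrow\)(g) are proved independently so as to avoid circularity, is the delicate point. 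The remaining arguments are routine once the identity \(\kernel+\lambda\hull_{\lambda}{f}=\conv(\lambda f+\kernel)\) and the correspondence \(\nabla\kernel(\bar y)=\lambda\bar u+\nabla\kernel(\bar x)\) are in hand.
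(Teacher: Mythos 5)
Your proposal is correct, and it draws on the same toolbox as the paper's proof---the representations in \cref{thm:Jfsubdiff,thm:prox-J,thm:prox*-J}, the identity \(\lambda\hull_{\lambda}{f}+\kernel=\conv*(\lambda f+\kernel)=\conv(\lambda f+\kernel)\) from \cref{thm:hull}, \cref{fact:fsubdiff}, and the attainment argument linking (d) with (b)---but it wires the implication graph differently. The paper proves (a)\(\Leftrightarrow\)(b) and (c)\(\Leftrightarrow\)(d) exactly as you do; it then obtains (e)\(\Leftrightarrow\)(f) by applying the correspondence (a)\(\Leftrightarrow\)(b) to \(\hull_{\lambda}{f}\) itself, gets (f)\(\Leftrightarrow\)(g) from \cref{thm:hull}, and closes the cycle with two one-sided, hands-on implications: (a)\(\Rightarrow\)(f), via the sandwich argument for \(f(\bar x)=\hull_{\lambda}{f}(\bar x)\) followed by taking closed convex hulls in the subgradient inequality, and (g)\(\Rightarrow\)(a), a direct estimate. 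You instead use (g) as a hub: your single equivalence (b)\(\Leftrightarrow\)(g), obtained by combining \cref{thm:Jfsubdiff} with \cref{fact:fsubdiff} (and \(\conv*=\conv\), guaranteed by 1-coercivity through \cref{thm:extlsc,thm:cvxext}), compresses the paper's two implications into one step, after which (e) and (f) are attached to (g) by applying \cref{thm:Jfsubdiff} to the convex function \(\lambda\hull_{\lambda}{f}+\kernel\) together with the smooth sum rule at \(\bar x\in\interior\X\). Your treatment of the side condition \(\bar x\in\dom\psubdiff f\) coincides with the paper's: it is invoked only after the cycle through (a), (b), (e), (f), (g) is in place, it forces \(\hull_{\lambda}{f}(\bar x)=f(\bar x)\), and minimality of \(\bar y\) for \(y\mapsto-\env{f}(y)+\tfrac{1}{\lambda}\D(\bar x,y)\) then becomes exactly (b); as you note, this ordering is what avoids circularity. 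The net difference is organizational: your route is slightly shorter (no separate (a)\(\Rightarrow\)(f) computation), while the paper's route keeps \(\hull_{\lambda}{f}\) in the foreground as the canonical convex minorant through which all of (e), (f), (g) are read, a viewpoint it reuses in \cref{thm:psubdiff_single,thm:coincidence}.
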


\begin{proof}
	We start by observing that the definition of \(\bar y\) can equivalently be expressed as
	\begin{equation}\label{eq:yu}
		\bar u
	=
		\tfrac{\nabla\kernel(\bar y)-\nabla\kernel(\bar x)}{\lambda}.
	\end{equation}
	\begin{itemize}
	\item
		``\ref{thm:psubdiffchar:nonempty} \(\Leftrightarrow\) \ref{thm:psubdiffchar:prox}''
		Taking into account \eqref{eq:yu}, this is \cref{thm:prox-J}.

	\item
		``\ref{thm:psubdiffchar:env} \(\Leftrightarrow\) \ref{thm:psubdiffchar:prox*}''
		This is \cref{thm:prox*-J}.

	\item ``\ref{thm:psubdiffchar:proxhull} \(\Rightarrow\) \ref{thm:psubdiffchar:hull}''
		Since \(\psubdiff(\hull_\lambda{f})\subseteq\lsubdiff(\hull_\lambda{f})\) by \cref{thm:FPsubdiff:f}, the claim follows from the implication ``\ref{thm:psubdiffchar:prox} \(\Rightarrow\) \ref{thm:psubdiffchar:nonempty}'' shown above applied to \(\hull_\lambda{f}\).

	\item ``\ref{thm:psubdiffchar:hull} \(\Rightarrow\) \ref{thm:psubdiffchar:proxhull}''
		Note that
		\begin{align*}
			\lsubdiff\hull_\lambda{f}(\bar x)
		~~\eq*[\ref{thm:hull}]~~ &
			\lsubdiff\conv(f+\lambda^{-1}\kernel)(\bar x)-\lambda^{-1}\nabla\kernel(\bar x)
		\\
		~~=~~ &
			\fsubdiff\left(\hull_\lambda{f}+\lambda^{-1}\kernel\right)(\bar x)
			-
			\lambda^{-1}\nabla\kernel(\bar x)
		\\
		~~\eq*[\ref{thm:FPsubdiff:f}]~~ &
			\psubdiff\hull_\lambda{f}(\bar x).
		\end{align*}
		Therefore, by virtue of the implication ``\ref{thm:psubdiffchar:nonempty} \(\Rightarrow\) \ref{thm:psubdiffchar:prox}'' shown above applied to \(\hull_\lambda{f}\), it follows from the inclusion \(\bar u\in\lsubdiff\hull_\lambda{f}(\bar x)\) that \(\bar y\in\prox_{\lambda\hull_\lambda{f}}(\bar x)\).

	\item
		``\ref{thm:psubdiffchar:hull} \(\Leftrightarrow\) \ref{thm:psubdiffchar:chull}''
		Apply \cref{thm:hull}.

	\item
		``\ref{thm:psubdiffchar:nonempty} \(\Rightarrow\) \ref{thm:psubdiffchar:hull}''
		By virtue of the equivalence ``\ref{thm:psubdiffchar:nonempty} \(\Leftrightarrow\) \ref{thm:psubdiffchar:prox}'' proven above,
		\(
			\bar x\in\prox_{\lambda f}(\bar y)
		\).
		Then,
		\begin{align*}
			f(\bar x)
		\geq
			\hull_\lambda{f}(\bar x)
		={} &
			\sup_{w\in\Y}
			\set{
				\env{f}(w)
				-
				\tfrac{1}{\lambda}\D(\bar x,w)
			}
		\\
		\geq{} &
			\env{f}(\bar y)
			-
			\tfrac{1}{\lambda}\D(\bar x,\bar y)
		=
			f(\bar x),
		\end{align*}
		where the first inequality owes to \cref{thm:hull}.
		This shows that \(f(\bar x)=\hull_\lambda{f}(\bar x)\).
		By adding \(\lambda^{-1}\kernel\) to both sides in \eqref{eq:psubdiff}, from the inclusion \(\bar u\in\psubdiff f(\bar x)\) we have that
		\begin{align*}
			f
			+
			\tfrac{1}{\lambda}\kernel
		\geq{} &
			f(\bar x)
			+
			\innprod{\bar u}{{}\cdot{}-\bar x}
			-
			\tfrac{1}{\lambda}\D({}\cdot{},\bar x)
			+
			\tfrac{1}{\lambda}\kernel
		\\
		={} &
			f(\bar x)
			+
			\tfrac{1}{\lambda}\kernel(\bar x)
			+
			\tfrac{1}{\lambda}\innprod{\nabla\kernel(\bar y)}{{}\cdot{}-\bar x}
		\\
		={} &
			\hull_\lambda{f}(\bar x)
			+
			\tfrac{1}{\lambda}\kernel(\bar x)
			+
			\tfrac{1}{\lambda}\innprod{\nabla\kernel(\bar y)}{{}\cdot{}-\bar x}.
		\end{align*}
		By taking the closed convex hull and by invoking \cref{thm:hull} this results in
		\[
			\hull_\lambda{f}+\tfrac{1}{\lambda}\kernel
		\geq
			\hull_\lambda{f}(\bar x)
			+
			\tfrac{1}{\lambda}\kernel(\bar x)
			+
			\tfrac{1}{\lambda}\innprod{\nabla\kernel(\bar y)}{{}\cdot{}-\bar x},
		\]
		that is,
		\[
			\tfrac{1}{\lambda}\nabla\kernel(\bar y)
		\in
			\fsubdiff\bigl[\hull_\lambda{f}+\tfrac{1}{\lambda}\kernel\bigr](\bar x)
		=
			\lsubdiff\bigl[\hull_\lambda{f}+\tfrac{1}{\lambda}\kernel\bigr](\bar x)
		=
			\lsubdiff\hull_\lambda{f}(\bar x)+\tfrac{1}{\lambda}\nabla\kernel(\bar x),
		\]
		where the first equality follows from the convexity of \(\hull_\lambda{f}+\tfrac{1}{\lambda}\kernel\).
		Recalling that \(\lambda^{-1}\nabla\kernel(\bar y)=\bar u+\lambda^{-1}\nabla\kernel(\bar x)\), cf. \eqref{eq:yu}, the proof follows.

	\item
		``\ref{thm:psubdiffchar:chull} \(\Rightarrow\) \ref{thm:psubdiffchar:nonempty}''
		We have,
		\begin{align*}
			(\forall x\in\X)~
			(f+\lambda^{-1}\kernel)(x)
		\geq{} &
			\conv(f+\lambda^{-1}\kernel)(x)
		\\
		\geq{} &
			\conv(f+\lambda^{-1}\kernel)(\bar x)
			+
			\innprod{\bar u+\lambda^{-1}\nabla \kernel(\bar x)}{x-\bar x}
		\\
		={} &
			(f+\lambda^{-1}\kernel)(\bar x)
			+
			\innprod{\bar u+\lambda^{-1}\nabla \kernel(\bar x)}{x-\bar x}
		\\
		={} &
			f(\bar x)
			+
			\innprod{\bar u}{x-\bar x}
			-
			\tfrac{1}{\lambda}\D(x,\bar x)
			+
			\tfrac{1}{\lambda}\kernel(x),
		\end{align*}
		which by \eqref{eq:psubdiff} shows that \(\bar u\in\psubdiff f(\bar x)\).

	\item
		``\ref{thm:psubdiffchar:prox*} \(\Rightarrow\) \ref{thm:psubdiffchar:prox}''
		We have
		\begin{equation}\label{eq:envfxy}
			\hull_\lambda{f}(\bar x)
		\defeq
			-\env*{\bigl(-\env{f}\bigr)}(\bar x)
		=
			\env{f}(\bar y)
			-
			\tfrac{1}{\lambda}\D(\bar x,\bar y).
		\end{equation}
		On the other hand, by \cref{thm:prox-J} there exists \(\bar y'\in\Y\) such that
		\(
			\bar x\in\prox_{\lambda f}(\bar y')
		\)
		(specifically, of the form \(\bar y'=\nabla\kernel*(\bar v+\lambda\nabla\kernel(\bar x))\) with \(\bar v\in\psubdiff f(\bar x)\)).
		Then, for such \(\bar y'\) one has
		\[
			f(\bar x)
		=
			\env{f}(\bar y')
		-
			\tfrac{1}{\lambda}\D(\bar x,\bar y')
		\leq
			\hull_\lambda{f}(\bar x)
		\leq
			f(\bar x)
		\]
		implying that \(f(\bar x)=\hull_\lambda{f}(\bar x)\).
		By combining this identity with \eqref{eq:envfxy} we obtain that
		\(
			\env{f}(\bar y)
		=
			f(\bar x)
			+
			\tfrac{1}{\lambda}\D(\bar x,\bar y)
		\),
		that is, that \(\bar x\in\prox_{\lambda f}(\bar y)\).

	\item
		``\ref{thm:psubdiffchar:prox} \(\Rightarrow\) \ref{thm:psubdiffchar:prox*}''
		We start by observing that nonemptiness of \(\psubdiff f(\bar x)\) follows by virtue of the equivalence ``\ref{thm:psubdiffchar:nonempty} \(\Leftrightarrow\) \ref{thm:psubdiffchar:prox}'' proven above.
		We have
		\begin{align*}
			\hull_\lambda{f}(\bar x)
		\leq
			f(\bar x)
		={} &
			\env{f}(\bar y)
			-
			\tfrac{1}{\lambda}\D(\bar x,\bar y)
		\\
		\leq{} &
			\sup_{y\in\Y}\set{
				\env{f}(y)
				-
				\tfrac{1}{\lambda}\D(\bar x,y)
			}
		\defeq
			\hull_\lambda{f}(\bar x).
		\end{align*}
		All inequalities thus hold as equalities, and in particular
		\[
			\bar y
		\in
			\argmax_{y\in\Y}\set{
				\env{f}(y)
				-
				\tfrac{1}{\lambda}\D(\bar x,y)
			}
		=
			\prox*_{-\lambda\env{f}}(\bar x)
		\]
		as claimed.
	\qedhere
	\end{itemize}
\end{proof}
The equivalence in \cref{thm:psubdiffchar} can be interpreted as a generalized ``duality'' of \(\Phi\)-conjugacy in optimal transport; \cite[\S4]{themelis2026natural}.
Equipped with \cref{thm:psubdiffchar}, one may calculate \(\psubdiff f\) easily through a geometric approach utilizing \(\conv(\lambda f+\kernel)\).

\vspace{\topsep}%
\noindent
\begin{minipage}[m]{0.55\linewidth}%
	\begin{example}\label{ex:psubdiff}%
		Let \(f(x)=x\sqrt{1-x^2}\) and \(\kernel(x)=-\sqrt{1-x^2}\).
		For \(\lambda=1\) one has that \(\conv(f+\kernel)(x)\) equals
		\[
			\begin{cases}
				(f+\kernel)(x) & \text{if } x<0
			\\
				(f+\kernel)'(0)x+(f+\kernel)(0) & \text{otherwise},
			\end{cases}
		\]
		which is smaller than \((f+\kernel)(x)\) on \((0,1]\).
		As such, \cref{thm:psubdiffchar:chull} implies that
		\[
			\psubdiff[] f(x)
		=
			\begin{cases}
				\set{\nabla f(x)} &-1<x\leq 0\\
				\emptyset &0\leq x<1.
			\end{cases}
		\]
	\end{example}
\end{minipage}
\hfill
\begin{minipage}[m]{0.42\linewidth}%
	\vspace*{0pt}%
	\includegraphics[width=\linewidth]{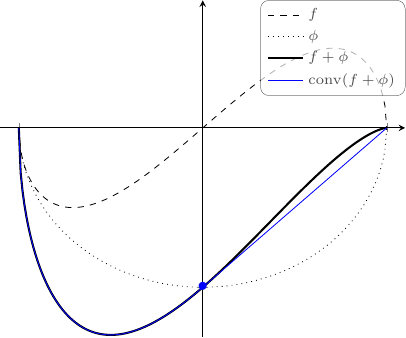}%
\end{minipage}
\vspace{\topsep}%

In convex analysis, differentiability of a convex function is closely tied with its subdifferential operator being single-valued; see for instance \cite[Prop. 17.36]{bauschke2017convex}.
Leveraging on \(\psubdiff f\), we now explore a parallel result in the absence of convexity.

\begin{theorem}\label{thm:psubdiff_single}%
	Suppose that \(\kernel\) is 1-coercive and Legendre, and let \(\func{f}{\X}{\Rinf}\) be proper, lsc, and \(\kernel\)-prox-bounded.
	Then, for any \(\lambda\in(0,\pb)\) and \(\bar x\in\interior\X\) the following are equivalent:
	\begin{enumerateq}
		\item \label{thm:psubdiff_single:single}%
		\(\psubdiff f(\bar x)\) is a singleton.
		\item \label{thm:psubdiff_single:conv}%
		\(\conv (f+\lambda^{-1}\kernel)\) is differentiable and coincides with \(f+\lambda^{-1}\kernel\) at \(\bar x\).
		\item \label{thm:psubdiff_single:hull}%
		\(\hull_\lambda{f}\) is differentiable and coincides with \(f\) at \(\bar x\).
	\end{enumerateq}
	Under any of the above equivalent conditions, one also has that
	\[
		\nabla\conv (f+\lambda^{-1}\kernel)(\bar x)
	=
		\bar u+\lambda^{-1}\nabla\kernel(\bar x)
	\quad\text{and}\quad
		\nabla\hull_\lambda{f}(\bar x)
	=
		\bar u,
	\]
	where \(\set{\bar u}=\psubdiff f(\bar x)\).
\end{theorem}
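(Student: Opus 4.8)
The plan is to push the whole question through the function obtained by convexifying $f+\lambda^{-1}\kernel$, on which single-valuedness of the subdifferential and differentiability become interchangeable. Write $\psi\coloneqq f+\lambda^{-1}\kernel$. I would first record two structural facts. By \cref{thm:FPsubdiff:f}, at the interior point $\bar x\in\interior\X$ the left level proximal subdifferential is a plain translate of a Fenchel subdifferential, $\psubdiff f(\bar x)=\fsubdiff\psi(\bar x)-\lambda^{-1}\nabla\kernel(\bar x)$. Next, since $\kernel$ is Legendre (hence essentially smooth) and $1$-coercive while $f$ is $\kernel$-prox-bounded with $\lambda<\pb$, both inequalities in \cref{thm:hull} hold as equalities; after dividing by $\lambda$ this yields $\conv\psi=\conv*\psi=\hull_{\lambda}{f}+\lambda^{-1}\kernel$, a proper lsc convex function (the identity $\conv*\psi=\conv\psi$ owing to $1$-coercivity of $\lambda\tilde f+\kernel$, cf. \cref{thm:extlsc,thm:cvxext}).

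With these in hand, the equivalence \ref{thm:psubdiff_single:conv}$\Leftrightarrow$\ref{thm:psubdiff_single:hull} is immediate: at $\bar x\in\interior\dom\kernel$ the term $\lambda^{-1}\kernel$ is finite and differentiable, so adding or subtracting it alters neither differentiability nor coincidence of values; since moreover $\dom\hull_{\lambda}{f}\subseteq\X=\dom\kernel$, the functions $\conv\psi$ and $\hull_{\lambda}{f}$ share the same (interior of the) domain, whence $\conv\psi$ is differentiable and coincides with $\psi$ at $\bar x$ precisely when $\hull_{\lambda}{f}$ is differentiable and coincides with $f$ at $\bar x$.

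For \ref{thm:psubdiff_single:single}$\Leftrightarrow$\ref{thm:psubdiff_single:conv} I would work with $\fsubdiff\psi(\bar x)$, which by the translation identity is a singleton exactly when $\psubdiff f(\bar x)$ is. The bridge to the convex object is \cref{fact:fsubdiff}: it gives $\fsubdiff\psi(\bar x)\neq\emptyset\Rightarrow\psi(\bar x)=\conv\psi(\bar x)$ together with $\fsubdiff\psi(\bar x)=\fsubdiff(\conv\psi)(\bar x)$, while the reverse implication is elementary (if $\psi(\bar x)=\conv\psi(\bar x)$, any affine minorant of $\conv\psi$ that is exact at $\bar x$ is also an affine minorant of $\psi$ exact at $\bar x$), so that under coincidence the two subdifferentials simply agree. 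Hence for \ref{thm:psubdiff_single:conv}$\Rightarrow$\ref{thm:psubdiff_single:single}, differentiability of $\conv\psi$ at $\bar x$ makes $\fsubdiff(\conv\psi)(\bar x)$ a singleton and coincidence transports this to $\fsubdiff\psi(\bar x)$, hence to $\psubdiff f(\bar x)$. The converse is the delicate direction and the step I expect to be the main obstacle: from $\psubdiff f(\bar x)=\set*{\bar u}$ I obtain that $\fsubdiff(\conv\psi)(\bar x)=\set*{\bar u+\lambda^{-1}\nabla\kernel(\bar x)}$ is a nonempty singleton and that $\psi(\bar x)=\conv\psi(\bar x)$, but single-valuedness of a convex subdifferential only forces differentiability at \emph{interior} points of the domain (as in \cite[Prop. 17.36]{bauschke2017convex}), so I must first certify $\bar x\in\interior\dom\conv\psi$.

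I would close that gap with the standard fact that a proper lsc convex function has a nonempty \emph{bounded} subdifferential at a point if and only if that point lies in the interior of its domain \cite[Thm. 23.4]{rockafellar1970convex}; since a singleton is bounded, $\fsubdiff(\conv\psi)(\bar x)=\set*{\bar u+\lambda^{-1}\nabla\kernel(\bar x)}$ forces $\bar x\in\interior\dom\conv\psi$, and then \cite[Prop. 17.36]{bauschke2017convex} delivers differentiability of $\conv\psi$ at $\bar x$, which is \ref{thm:psubdiff_single:conv}. The supplementary formulas then come for free: differentiability identifies $\nabla\conv(f+\lambda^{-1}\kernel)(\bar x)=\bar u+\lambda^{-1}\nabla\kernel(\bar x)$, and subtracting the derivative $\lambda^{-1}\nabla\kernel(\bar x)$ of the smooth summand in $\conv\psi=\hull_{\lambda}{f}+\lambda^{-1}\kernel$ yields $\nabla\hull_{\lambda}{f}(\bar x)=\bar u$.
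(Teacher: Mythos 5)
Your proposal is correct and takes essentially the same route as the paper's proof: both pass to the convexification \(\conv(f+\lambda^{-1}\kernel)=\hull_{\lambda}{f}+\lambda^{-1}\kernel\) via \cref{thm:hull}, translate \(\psubdiff f(\bar x)\) into the Fenchel subdifferential \(\fsubdiff(f+\lambda^{-1}\kernel)(\bar x)\) by \cref{thm:FPsubdiff:f}, bridge to the hull with \cref{fact:fsubdiff}, and conclude with the convex-analytic equivalence between a singleton subdifferential and differentiability. The only differences are cosmetic: the paper obtains the coincidence \(f(\bar x)=\hull_{\lambda}{f}(\bar x)\) through \cref{thm:psubdiffchar} rather than directly from \cref{fact:fsubdiff}, and it cites \cite[Thm.~25.1]{rockafellar1970convex}, whose converse part (a unique subgradient forces differentiability, with interiority of \(\bar x\) in the domain as a consequence rather than a hypothesis) already absorbs the separate interiority certification you carry out via \cite[Thm.~23.4]{rockafellar1970convex}.
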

\begin{proof}~
	\begin{itemize}
	\item
		``\ref{thm:psubdiff_single:single} \(\Rightarrow\) \ref{thm:psubdiff_single:conv}''
		Let \(\psubdiff f(\bar x)=\set{\bar u}\).
		Then appealing to \cref{thm:psubdiffchar:hull} yields
		\(
			\conv(f+\lambda^{-1}\kernel)(\bar x)
		=
			(f+\lambda^{-1}\kernel)(\bar x)
		\),
		and in particular
		\[
			\lsubdiff\conv(f+\lambda^{-1}\kernel)(\bar x)
		=
			\fsubdiff(f+\lambda^{-1}\kernel)(\bar x)
		=
			\set{
				\bar u
				+
				\lambda^{-1}\nabla\kernel(\bar x)
			},
		\]
		where the first identity owes to \cref{fact:fsubdiff} and the second one to \cref{thm:FPsubdiff:f}.
		\(\lsubdiff\conv(f+\lambda^{-1}\kernel)(\bar x)\) is thus a singleton, and hence \(\conv(f+\lambda^{-1}\kernel)\) is differentiable at \(\bar x\) \cite[Thm. 25.1]{rockafellar1970convex}.

	\item ``\ref{thm:psubdiff_single:conv} \(\Leftrightarrow\) \ref{thm:psubdiff_single:hull}''
		Immediate from \cref{thm:hull}.

	\item ``\ref{thm:psubdiff_single:hull} \(\Rightarrow\) \ref{thm:psubdiff_single:single}''
		It follows from \cref{thm:hull} that
		\(
			\conv(f+\lambda^{-1}\kernel)(\bar x)
		=
			(f+\lambda^{-1}\kernel)(\bar x)
		\),
		hence that
		\(
			\fsubdiff(f+\lambda^{-1}\kernel)(\bar x)
		=
			\lsubdiff\conv(f+\lambda^{-1}\kernel)(\bar x)
		\)
		owing to \cref{fact:fsubdiff}.
		Differentiability of \(\hull_\lambda{f}\) at \(\bar x\) implies single-valuedness of these subdifferentials \cite[Thm. 25.1]{rockafellar1970convex}, hence that of \(\psubdiff f(\bar x)\) by virtue of \cref{thm:FPsubdiff:f}.
		\qedhere
	\end{itemize}
\end{proof}

We end this section with a coincidence result.

\begin{theorem}\label{thm:coincidence}%
	Suppose that \(\kernel\) is 1-coercive, and let \(\func{f_i}{\X}{\Rinf}\), \(i=1,2\) be proper, lsc, and \(\kernel\)-prox-bounded.
	Let \(\lambda\in(0,\min\set*{\pb_{f_1},\pb_{f_2}})\) be fixed, and consider the following assertions:
	\begin{enumerateq}
	\item \label{thm:coincidence:env}%
		\(\env{f}_1=\env{f}_2+c\) for some \(c\in\R\).
	\item \label{thm:coincidence:phull}%
		\(\hull_\lambda{f}_1=\hull_\lambda{f}_2+c'\) for some \(c'\in\R\).
	\item \label{thm:coincidence:prox}%
		\(\prox_{\lambda f_1}=\prox_{\lambda f_2}\).
	\item \label{thm:coincidence:sub}%
		\(\psubdiff f_1=\psubdiff f_2\).
	\end{enumerateq}
	One always has
	\ref{thm:coincidence:env}
	\(\Leftrightarrow\)
	\ref{thm:coincidence:phull}
	(in which case \(c=c'\))
	and
	\ref{thm:coincidence:prox}
	\(\Rightarrow\)
	\ref{thm:coincidence:sub}.
	If both \(f_1\) and \(f_2\) satisfy \cref{ass:range}, then
	\ref{thm:coincidence:prox}
	\(\Leftarrow\)
	\ref{thm:coincidence:sub}
	also holds.
	If instead \(\kernel\) is Legendre, then one has
	\ref{thm:coincidence:prox}
	\(\Rightarrow\)
	\ref{thm:coincidence:env}.
\end{theorem}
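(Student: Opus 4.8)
The plan is to handle the four implications separately, drawing on the resolvent formulas of \cref{thm:FPsubdiff,cor:resolvent} together with the hull and envelope identities of \cref{thm:hull,thm:left*}. For \ref{thm:coincidence:env} \(\Leftrightarrow\) \ref{thm:coincidence:phull} I would argue directly from the definition \(\hull_{\lambda}{f}=-\env*{(-\env{f})}\), relying on two elementary facts. First, shifting the inner function by a constant shifts either envelope by the same constant, so that \(\env*{(-\env{f}-c)}=\env*{(-\env{f})}-c\), and likewise for \(\env\). Second, \(\env{(\hull_{\lambda}{f})}=\env{f}\): the inequality ``\(\leq\)'' follows from \(\hull_{\lambda}{f}\leq f\) (see \cref{thm:hull}) by monotonicity of the infimal operation, while substituting \(w=\bar y\) into \(\hull_{\lambda}{f}(x)=\sup_{w\in\Y}\set{\env{f}(w)-\tfrac1\lambda\D(x,w)}\) gives the reverse. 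Granting these, applying \(\env*\) to \(-\env{f}_1=-\env{f}_2-c\) produces \ref{thm:coincidence:phull} with \(c'=c\), and applying \(\env\) to \(\hull_{\lambda}{f}_1=\hull_{\lambda}{f}_2+c'\) produces \ref{thm:coincidence:env} with \(c=c'\); in particular the two constants coincide.

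The equivalence between \ref{thm:coincidence:prox} and \ref{thm:coincidence:sub} I would read off from the resolvent representations. For \ref{thm:coincidence:sub} \(\Rightarrow\) \ref{thm:coincidence:prox} under \cref{ass:range}, the warped-resolvent identity \(\prox_{\lambda f_i}=(\nabla\kernel+\lambda\psubdiff f_i)^{-1}\circ\nabla\kernel\) from \cref{thm:warped} makes the implication immediate, since equal level proximal subdifferentials yield equal proximal maps. For the converse \ref{thm:coincidence:prox} \(\Rightarrow\) \ref{thm:coincidence:sub} I would start from \cref{thm:prox-J}, namely \(\interior\X\cap\prox_{\lambda f_i}=(\nabla\kernel+\lambda\psubdiff f_i)^{-1}\circ\nabla\kernel\); equating the proximal maps equates the two warped resolvents, and I would then recover the graphs by noting that \(\bar u\in\psubdiff f_i(\bar x)\) with \(\lambda\bar u+\nabla\kernel(\bar x)\in\range\nabla\kernel\) corresponds, through the \(\bar y\in\Y\) satisfying \(\nabla\kernel(\bar y)=\lambda\bar u+\nabla\kernel(\bar x)\), to \(\bar x\in\prox_{\lambda f_i}(\bar y)\).

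For \ref{thm:coincidence:prox} \(\Rightarrow\) \ref{thm:coincidence:env} under the Legendre hypothesis, I would use that \cref{thm:D*} turns \(\nabla\kernel\) into a bijection onto \(\interior\dom\kernel*\), which by 1-coercivity is all of \(\R^n\). Then \cref{thm:Jfsubdiff}, namely \(\prox_{\lambda f_i}=\fsubdiff(\kernel+\lambda f_i)^{-1}\circ\nabla\kernel\), allows cancellation of \(\nabla\kernel\) and, upon inverting, yields \(\fsubdiff(\kernel+\lambda f_1)=\fsubdiff(\kernel+\lambda f_2)\). Since two functions sharing a Fenchel subdifferential have closed convex hulls differing by an additive constant (a standard convex-analysis fact), their conjugates differ by a constant as well; inserting this into the envelope-conjugate identity \cref{thm:envconj} shows that \(\env{f}_1-\env{f}_2\) is constant, which is \ref{thm:coincidence:env}.

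The step I expect to be the main obstacle is the converse \ref{thm:coincidence:prox} \(\Rightarrow\) \ref{thm:coincidence:sub}. Reconstructing the \emph{entire} graph of \(\psubdiff f_i\) from the warped resolvent requires every candidate subgradient to be realizable, that is \(\lambda\bar u+\nabla\kernel(\bar x)\in\range\nabla\kernel\), so that a matching \(\bar y\in\Y\) exists; this is automatic precisely when \(\range\nabla\kernel=\R^n\), as holds under essential smoothness together with 1-coercivity (in particular in the Legendre case). Any portion of \(\psubdiff f_i(\bar x)\) whose image under \(\bar u\mapsto\lambda\bar u+\nabla\kernel(\bar x)\) escapes \(\range\nabla\kernel\) is invisible to \(\prox_{\lambda f_i}\), so establishing this implication hinges on controlling—or excluding via the range of \(\nabla\kernel\)—that boundary contribution.
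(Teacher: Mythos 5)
Your handling of \ref{thm:coincidence:env} \(\Leftrightarrow\) \ref{thm:coincidence:phull} and of \ref{thm:coincidence:sub} \(\Rightarrow\) \ref{thm:coincidence:prox} is correct and essentially identical to the paper's: the equivalence rests on the same two identities (constants commute with both envelopes, and \(\env{(\hull_{\lambda}{f})}=\env{f}\), which you actually prove rather than assert), and the implication follows from \cref{thm:prox-J} once \cref{ass:range} makes the intersection with \(\interior\X\) superfluous.

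The genuine gap is \ref{thm:coincidence:prox} \(\Rightarrow\) \ref{thm:coincidence:sub}, which you correctly isolate but leave open. The paper closes it with precisely the ingredient you name: it asserts that 1-coercivity of \(\kernel\) forces \(\range\nabla\kernel=\R^n\), so that every \(\bar u\in\psubdiff f_1(\bar x)\) is realized as \(\bar x\in\interior\X\cap\prox_{\lambda f_1}(\bar y)\) for the \(\bar y\in\Y\) with \(\nabla\kernel(\bar y)=\lambda\bar u+\nabla\kernel(\bar x)\), whence \(\bar u\in\psubdiff f_2(\bar x)\) by \cref{thm:prox-J}; the paper phrases this as two cases (according to whether \(\interior\X\cap\prox_{\lambda f_i}\) has empty graph or not), but both cases rest on that surjectivity. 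So, measured against the paper's proof, what your argument lacks is this single invocation. Your hesitation, however, is well founded: for a dgf that is not essentially smooth, 1-coercivity alone does \emph{not} give \(\range\nabla\kernel=\R^n\). Take \(\kernel=\j+\indicator_{[-1,1]}\), so that \(\range\nabla\kernel=(-1,1)\); over this kernel, with \(\lambda=1\), the functions \(f_1=\indicator_{\set{0}}\) and \(f_2(x)=\abs{x}-\tfrac{1}{2}x^2\) on \(\X=[-1,1]\) are proper, lsc, and \(\kernel\)-prox-bounded, satisfy \(\prox_{\lambda f_1}=\prox_{\lambda f_2}\equiv\set{0}\), and yet \(\psubdiff f_1(0)=\R\) while \(\psubdiff f_2(0)=[-1,1]\). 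So the ``invisible boundary contribution'' you describe is not merely a proof obstacle: it is exactly where essential smoothness (or some substitute hypothesis) genuinely enters, and the paper's one-line surjectivity claim is the step doing all the work.

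For \ref{thm:coincidence:prox} \(\Rightarrow\) \ref{thm:coincidence:env} you take a genuinely different route. The paper transfers to the Euclidean setting, writing \(\prox_{\lambda f_i}\circ\nabla\kernel*=\Eprox_{\lambda h_i}\) with \(h_i=f_i+\lambda^{-1}(\kernel-\j)\) via \cref{thm:Euclidean,thm:extlsc}, cites \cite[Thm. 6.1]{luo2024level} to get \(\Ehull_{\lambda}h_1=\Ehull_{\lambda}h_2+c\), and transfers back. Your route (cancel the Legendre bijection \(\nabla\kernel\) in \cref{thm:Jfsubdiff} to deduce \(\fsubdiff(\kernel+\lambda f_1)=\fsubdiff(\kernel+\lambda f_2)\), then conclude that hulls, conjugates, and envelopes differ by constants) is viable and avoids the external citation, but the ``standard convex-analysis fact'' you invoke is standard only for proper lsc \emph{convex} functions, and \(\kernel+\lambda f_i\) need not be convex here. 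It can be repaired using 1-coercivity: the supremum defining \((\kernel+\lambda f_i)^*(u)\) is attained for every \(u\in\R^n\), so the common operator \(G=\fsubdiff(\kernel+\lambda f_i)\) has full range, and \(G^{-1}\) is a monotone operator with \(\dom G^{-1}=\R^n\) contained in both \(\lsubdiff\bigl[(\kernel+\lambda f_1)^*\bigr]\) and \(\lsubdiff\bigl[(\kernel+\lambda f_2)^*\bigr]\); these two finite convex functions then have equal gradients almost everywhere, hence differ by a constant, and \cref{thm:envconj} converts this into \ref{thm:coincidence:env}. With that repair, your argument for this leg is a legitimate self-contained alternative to the paper's.
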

\begin{proof}
	We start from implications without additional assumptions.
	\begin{itemize}
	\item``\ref{thm:coincidence:env} \(\Leftrightarrow\) \ref{thm:coincidence:phull}''
		Follows from the fact that, for any \(c\in\R\) and \(\func{f}{\X}{\Rinf}\), one has
		\(
			c+\hull_\lambda{f}
		=
			\hull_\lambda{(f+c)}
		=
			-\env*{(-\env{(f+c)})}
		\)
		and the identity
		\(\env{(\hull_\lambda{f})}=\env{f}\).

	\item ``\ref{thm:coincidence:prox} \(\Rightarrow\) \ref{thm:coincidence:sub}''
		Suppose first that \(\interior\X\cap\prox_{\lambda f_1}=\interior\X\cap\prox_{\lambda f_2}\neq\emptyset\).
		Then applying \cref{thm:prox-J} completes the proof.
		Suppose now that \(\interior\X\cap\prox_{\lambda f_1}=\interior\X\cap\prox_{\lambda f_2}=\emptyset\).
		We claim that \(\psubdiff f_1=\psubdiff f_2=\emptyset\).
		If not, say there were \(u\in\psubdiff f_1(x)\) for some \(x\in\interior\X\), then the 1-coercivity of \(\kernel\) implies that
		\(\range\nabla\kernel=\R^n\) and thus \(\nabla\kernel(v)=\lambda u+\nabla\kernel(x)\) for some \(v\in\interior\X\).
		Then, by \cref{thm:prox-J} we have that \(x\in\interior\X\cap\prox_{\lambda f_1}\), which contradicts our assumption.

	\item ``\ref{thm:coincidence:sub} \(\Rightarrow\) \ref{thm:coincidence:prox}''
		If \cref{ass:range} is satisfied, the proof is immediate by using \cref{thm:prox-J} again, since the intersection with \(\interior\X\) therein becomes superfluous.

	\item ``\ref{thm:coincidence:prox} \(\Rightarrow\) \ref{thm:coincidence:env}''
		Suppose that \(\kernel\) is Legendre.
		For \(i=1,2\), let \(\func{h_i}{\R^n}{\Rinf}\) be defined as \(f_i+\lambda ^{-1}(\kernel-\j)\) on \(\X\) and \(\infty\) on \(\R^n\setminus\X\), and observe that it is lsc by virtue of \cref{thm:extlsc}.
		It also follows from \cref{thm:Euclidean} and the assumptions on \(\kernel\) that \(\prox_{\lambda f_i}\circ\nabla\kernel*=\Eprox_{\lambda h_i}\), to which \cite[Thm. 6.1]{luo2024level} yields that \(\Ehull_\lambda h_1=\Ehull_\lambda h_2+c\) for some \(c\in\R\).
		By invoking again \cref{thm:Euclidean}, note that
		\(
			\Ehull_\lambda h_i\restr_{\X}
		=
			\hull_\lambda{f}_i+\lambda^{-1}(\kernel-\j)\restr_{\X}
		\),
		\(i=1,2\).
		Since \(\kernel-\j\) is finite on \(\X=\dom\kernel\), the claimed identity \(\hull_\lambda{f}_1=\hull_\lambda{f}_2+c\) follows.
	\qedhere
	\end{itemize}
\end{proof}

	\section{Global characterizations}\label{sec:global}
		\subsection{Relative weak convexity and monotonicity}%
In this section, we explore the interplay between \(\psubdiff f\) and relative weak convexity properties of \(f\) (see \cref{def:Bcvx} below), thus establishing the Bregman counterpart of the following well-known equivalence.

\begin{fact}[{\cite[Prop. 12.19]{rockafellar1998variational} and \cite[Prop. 2.6]{chen2020proximal}}]\label{fact:Ehypo}%
	Let \(\func{f}{\R^n}{\Rinf}\) be proper, lsc, and \(\j\)-prox-bounded with threshold \(\pb^{}\).
	Then, the following are equivalent for \(\lambda\in(0,\pb^{})\):
	\begin{enumerateq}
	\item
		\(\lambda f\) is 1-weakly convex; that is, \(\lambda f+\j\) is convex.
	\item
		\(\Eprox_{\lambda f}\) is maximally monotone.
	\item
		\(\Eprox_{\lambda f}\) is convex-valued.
	\item
		\(\Eprox_{\lambda f}=(\id+\lambda\lsubdiff f)^{-1}\).
	\end{enumerateq}
\end{fact}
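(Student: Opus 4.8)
The plan is to transfer all four assertions onto the auxiliary function \(g\coloneqq\lambda f+\j\), whose convexity is precisely~(a), using two identities that hold \emph{without} convexity. First, completing the square gives \(\Eprox_{\lambda f}(x)=\argmin_{w}\set{g(w)-\innprod{x}{w}}\), and since \(w\) minimizes \(g-\innprod{x}{{}\cdot{}}\) globally exactly when \(x\in\fsubdiff g(w)\), this is the Euclidean instance (\(\kernel=\j\)) of \cref{thm:Jfsubdiff}: \(\Eprox_{\lambda f}=(\fsubdiff g)^{-1}\). Second, the smooth sum rule \cite[Ex.~8.8]{rockafellar1998variational} gives \(\lsubdiff g=\id+\lambda\lsubdiff f\), and since \(\fsubdiff g\subseteq\lsubdiff g\) always, the inclusion \(\Eprox_{\lambda f}\subseteq(\id+\lambda\lsubdiff f)^{-1}\) is automatic; consequently~(d) amounts to the graph equality \(\fsubdiff g=\lsubdiff g\). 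Prox-boundedness with \(\lambda<\pb\) renders \(g\) proper, lsc, and \(1\)-coercive, so \cref{thm:mainprop} makes \(\Eprox_{\lambda f}\) nonempty- and compact-valued with full domain.

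I would then settle \((a)\Leftrightarrow(d)\). If~(a) holds, \(g\) is convex, so \(\fsubdiff g=\lsubdiff g\); this immediately yields~(d), while \(\Eprox_{\lambda f}=(\fsubdiff g)^{-1}=\fsubdiff(g^*)\) is maximally monotone and convex-valued, giving~(b) and~(c) \cite[Thm.~12.17]{rockafellar1998variational}. Conversely, \(\fsubdiff g\) is \emph{always} monotone (immediate from the global inequality defining it), so under~(d) the operator \(\lsubdiff g=\fsubdiff g\) is monotone, whence \(g\) is convex by the converse half of \cite[Thm.~12.17]{rockafellar1998variational}; this is~(a). Finally \((b)\Rightarrow(c)\) is free, since the values of a maximally monotone operator on \(\R^n\) are closed and convex.

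It remains to close the cycle through \((c)\Rightarrow(a)\), which I expect to be the crux. The tool is the proximal-hull identity of \cref{thm:proxhull}, which for \(\kernel=\j\) reads \(\conv\Eprox_{\lambda f}=\Eprox_{\lambda\Ehull_{\lambda}f}\); convex-valuedness~(c) removes the hull on the left and yields \(\Eprox_{\lambda f}=\Eprox_{\lambda\Ehull_{\lambda}f}\). Because the proximal hull preserves the Moreau envelope, \(\Eenv_{\lambda}(\Ehull_{\lambda}f)=\Eenv_{\lambda}f\) (as used in the proof of \cref{thm:coincidence}), comparing attained values at any \(w\in\Eprox_{\lambda f}(x)\) forces \(f(w)=\Ehull_{\lambda}f(w)\); hence \(f\) and \(\Ehull_{\lambda}f\) coincide on \(\range\Eprox_{\lambda f}=\dom\lsubdiff(\Ehull_{\lambda}f)\), a set dense in \(\dom\conv*(\lambda f+\j)\). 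Recalling from \cref{thm:hull} that \(\lambda\Ehull_{\lambda}f+\j=\conv*(\lambda f+\j)\) is convex, the delicate final step --- the part I expect to demand the most care --- is to propagate \(f=\Ehull_{\lambda}f\) from this dense set to all of \(\R^n\): combining \(f\geq\Ehull_{\lambda}f\), lower semicontinuity of \(f\), and continuity of the convex function \(\conv*(\lambda f+\j)\) along segments into \(\relint\dom\), one upgrades the identity to \(\lambda f+\j=\conv*(\lambda f+\j)\), i.e.\ convexity of \(g\), which is exactly~(a).
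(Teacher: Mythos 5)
Your proof is correct, and it is essentially the approach the paper itself takes: the paper states this Fact without proof, quoting \cite[Prop.~12.19]{rockafellar1998variational} and \cite[Prop.~2.6]{chen2020proximal}, but its proof of the Bregman generalization (\cref{thm:hypo}) is exactly your argument specialized to \(\kernel=\j\) --- your resolvent identity \(\Eprox_{\lambda f}=\bigl(\fsubdiff(\lambda f+\j)\bigr)^{-1}\) is \cref{thm:Jfsubdiff}, the cycle (a) \(\Rightarrow\) (b) \(\Rightarrow\) (c) \(\Rightarrow\) (a) runs through \cite[Thm.~12.17]{rockafellar1998variational} and the proximal-hull identity \cref{thm:proxhull}, and your ``delicate final step'' is precisely \cref{thm:cxvridom}. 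The one point to state more carefully: what the segment argument needs (and what you in fact have) is not mere density of \(\range\Eprox_{\lambda f}\) in \(\dom\conv*(\lambda f+\j)\), but the inclusion \(\range\Eprox_{\lambda f}=\dom\fsubdiff\conv*(\lambda f+\j)\supseteq\relint\dom\conv*(\lambda f+\j)\), which holds because the Fenchel subdifferential of a proper lsc convex function is nonempty throughout the relative interior of its domain; with that containment in hand, your lsc-plus-segment-continuity upgrade is exactly the proof of \cref{thm:cxvridom}.
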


To this end, we will adopt the following Bregman generalization of weak convexity.
We also take the chance to mention the related extensions of strong convexity and smoothness that will be employed in the following section.

\begin{definition}\label{def:Bcvx}%
	A proper function \(\func{f}{\X}{\Rinf}\) is said to be
	\begin{enumerate}
	\item
		\B-weakly convex if \(f+\kernel\) is convex on \(\interior\X\).
	\item
		\B-strongly convex if \(f-\kernel\) is convex on \(\interior\X\).
	\item
		\B-smooth if both \(f\) and \(-f\) are \B-weakly convex, that is, if both \(f+\kernel\) and \(-f+\kernel\) are convex on \(\interior\X\).
	\end{enumerate}
	When \(\kernel=\j\), we simply say \emph{1-weakly convex}, \emph{1-strongly convex}, and \emph{1-smooth}, respectively.
\end{definition}

The choice of the prefix ``1-'' when \(\kernel=\j\) in the above definition owes to established conventions in the Euclidean setting in which this number represents the ``modulus'' of the corresponding property.
For instance, it is well known that 1-smoothness of \(\lambda f\) is equivalent to Lipschitz continuity of \(\lambda\nabla f\) with Lipschitz modulus 1.
More generally, the concept of \B-smoothness is of central importance in tackling optimization problems violating the classic Lipschitz smoothness assumption; see for instance \cite{bauschke2017descent,bolte2018first}.
The notation and terminology adopted here is borrowed from \cite{laude2023dualities}, which investigates the duality gap between relative strong convexity and smoothness for \(\kernel\) other than squared Euclidean norms.
The ``B'' in \B, short for ``Bregman'', distinguishes it from the corresponding \emph{anisotropic} counterparts that are shown to complete the duality picture.

\begin{remark}\label{rem:Bcvx:lsc}%
	Compatibly with the existing literature, starting from the seminal works \cite{bauschke2017descent,bolte2018first}, \cref{def:Bcvx} is concerned with the values of the function on \(\interior\X\), where it must be continuous, but is otherwise insensitive to its behavior on the boundary.
	In optimization contexts, it is customary to impose lower semicontinuity as a minimal working requirement.
	We deliberately avoid this a priori restriction in order to naturally view \B-smoothness as a two-sided \B-weak convexity without imposing \emph{continuity} (i.e., both lower and upper semicontinuity).
	This entails no loss of generality.
	Indeed, whenever \(f\) satisfies any of the conditions in \cref{def:Bcvx}, the same holds for the proper and lsc function \(\bar f\coloneqq\closure \mathring f\) as in \cref{rem:CQ:mathringf}---namely \(\func{\mathring f}{\X}{\Rinf}\) with \(\mathring f(x)=f(x)\) for \(x\in\interior\X\) and \(\infty\) otherwise---which agrees with \(f\) on \(\interior\X\).
\end{remark}

We start with a simple result that will be useful in the proof of the following \cref{thm:hypo}.

\begin{lemma}\label{thm:cxvridom}%
	Let \(\func{h}{\R^n}{\Rinf}\) be a proper and lsc function.
	Then, \(h\) is convex iff \(h\equiv\conv*h\) on \(\relint\dom\conv* h\).
\end{lemma}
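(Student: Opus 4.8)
The forward implication is immediate: if \(h\) is convex then, being also proper and lsc, it coincides with its biconjugate, so that \(h=h^{**}=\conv*h\) everywhere by \eqref{eq:h**}, and in particular on \(\relint\dom\conv*h\). The substance lies in the converse, which I would prove by upgrading the assumed equality on \(\relint\dom\conv*h\) to an equality on all of \(\R^n\); since \(\conv*h\) is convex, this forces \(h\) itself to be convex.

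To this end, write \(g\coloneqq\conv*h\). By \eqref{eq:h**} this is \(h^{**}\), hence a closed convex function, and \eqref{eq:convleq} gives \(g\leq h\); in particular \(\dom h\subseteq\dom g\) and \(\dom g\neq\emptyset\). I would first record that \(g\) is proper: the set \(\relint\dom g\) is nonempty, being the relative interior of a nonempty convex set, and on it \(g\) is finite and, by hypothesis, equal to \(h\), which is \({}>-\infty\) by properness of \(h\); thus \(g\) attains a finite value and is a proper closed convex function. It then suffices to show \(g=h\) pointwise. For \(\bar x\notin\dom g\) this is clear, since \(g(\bar x)=+\infty\leq h(\bar x)\) forces \(h(\bar x)=+\infty\). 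For \(\bar x\in\dom g\), fix any \(x_0\in\relint\dom g\) and set \(x_\lambda\coloneqq(1-\lambda)x_0+\lambda\bar x\). By the line-segment principle \cite[Thm. 6.1]{rockafellar1970convex} one has \(x_\lambda\in\relint\dom g\) for every \(\lambda\in[0,1)\), so the hypothesis yields \(h(x_\lambda)=g(x_\lambda)\) there.

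Since \(g\) is closed, \cite[Thm. 7.5]{rockafellar1970convex} gives \(g(x_\lambda)\to g(\bar x)\) as \(\lambda\uparrow1\); combining this with lower semicontinuity of \(h\) at \(\bar x\) (and \(x_\lambda\to\bar x\)) produces \(h(\bar x)\leq\liminf_{\lambda\uparrow1}h(x_\lambda)=\lim_{\lambda\uparrow1}g(x_\lambda)=g(\bar x)\). Together with \(g\leq h\) this yields \(h(\bar x)=g(\bar x)\), completing the pointwise identity \(h=g=\conv*h\) and hence the convexity of \(h\). The only nontrivial ingredient is the continuity of a closed proper convex function along a segment running from a relative-interior point of its domain toward a boundary point, which is exactly what \cite[Thm. 7.5]{rockafellar1970convex} supplies; this is what lets the equality propagate from \(\relint\dom\conv*h\) to the relative boundary, while the lower semicontinuity of \(h\) is what pins the inequality in the correct direction there. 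I expect this boundary-propagation step to be the main obstacle, all other steps being routine.
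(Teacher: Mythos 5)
Your proof is correct and follows essentially the same route as the paper's: both propagate the equality \(h=\conv*h\) from \(\relint\dom\conv*h\) to the rest of the domain along line segments, using the line-segment principle, continuity of the closed convex function along such segments (\cite[Thm. 7.5]{rockafellar1970convex}), lower semicontinuity of \(h\), and the inequality \(\conv*h\leq h\). The only cosmetic differences are that you first establish properness of \(\conv*h\) (where the paper instead notes the cited theorem applies even to improper functions at points of its domain) and that you conclude \(h=\conv*h\) globally, while the paper concludes equality on \(\conv\dom h\) and then argues \(h\) agrees with a convex function on its convex domain.
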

\begin{proof}
	The ``only if'' implication is obvious.
	Suppose now that \(h\equiv\conv*h\) on \(\relint\dom\conv*h\).
	Since \(h\) is proper, \(\conv*h\not\equiv\infty\) and as such \(\dom\conv* h\) is nonempty (and convex), and thus so is \(\relint\dom\conv*h\) \cite[Thm. 6.2]{rockafellar1970convex}.
	Fix \(x_0\in\relint\dom\conv*h\) and
	\(
		x_1\in\conv\dom h\subseteq\dom\conv*h
	\),
	and for \(\mu\in\R\) define \(x_\mu\coloneqq (1-\mu)x_0+\mu x_1\).
	Then,
	\[
		h(x_1)
	\leq
		\liminf_{\mu\to1^-}
		h(x_\mu)
	=
		\lim_{\mu\to1^-}
		\conv*h(x_\mu)
	=
		\conv*h(x_1)
	<
		\infty,
	\]
	where the first inequality owes to lower semicontinuity, the last equality follows from \cite[Thm. 7.5]{rockafellar1970convex}
	(even though \(\conv*h\) could, a priori, be improper, the reference still holds since \(x_1\in\dom\conv*h\)), and the last inequality from the fact that \(x_1\in\dom\conv*h\).
	This shows that \(h(x_1)=\conv*h(x_1)<\infty\) for \(x_1\in\conv\dom h\).
	From the arbitrariness of \(x_1\in\conv\dom h\), it follows that \(\conv\dom h=\dom h\) and that \(h\) agrees with a convex function on its convex domain, and must thus be convex.
\end{proof}

The following is the anticipated extension of \cref{fact:Ehypo} to the Bregman setting.
While the equivalence ``\ref{thm:hypo:hypo} \(\Leftrightarrow\) \ref{thm:hypo:maxmono}'' has been discovered by Kan and Song in \cite[Thm. 4.2]{kan2012moreau} and the implication ``\ref{thm:hypo:hypo} \(\Rightarrow\) \ref{thm:hypo:proxcvx}'' by Wang and Bauschke \cite[Prop. 2.10]{wang2022bregman}, the other implications listed below, to the best of our knowledge, are new.
In addition, we re-propose a self-contained proof of the known implication to account for the possible lack of lower semicontinuity, in our setting, of function \(f\) when extended to \(\R^n\).
Readers are referred to \cref{fig:hypo} for its connection to the well-known \cref{fact:Ehypo}.
Moreover, the following reveals that \(\dom\psubdiff f\) is ``big'' iff the underlying function \(\lambda f\) is \B-weakly convex.

\begin{theorem}\label{thm:hypo}%
	Suppose that \(\kernel\) is Legendre and 1-coercive, and let \(\func{f}{\X}{\Rinf}\) be proper, lsc, and \(\kernel\)-prox-bounded.
	Fix \(\lambda\in(0,\pb)\) and consider the following statements:
	\begin{enumerateq}
	\item \label{thm:hypo:hypo}%
		\(f+\lambda^{-1}\kernel\) is convex.

	\item \label{thm:hypo:hull}%
		\(f=\hull_\lambda{f}\).

	\item \label{thm:hypo:maxmono}%
		\(\prox_{\lambda f}\circ\nabla\kernel*\) is maximally monotone.

	\item \label{thm:hypo:proxcvx}%
		\(\prox_{\lambda f}\circ\nabla\kernel*\) is convex-valued.

	\item \label{thm:hypo:subdiff}%
		\(\relint\conv\dom f\subseteq\dom\lsubdiff f\) and \(\lsubdiff f=\psubdiff f\) on \(\interior\X\).

	\item \label{thm:hypo:dom}%
	\(\relint\conv\dom f\cap\interior\X\subseteq\dom\psubdiff f\).

	\item \label{thm:hypo:J}%
		\(\relint\conv\dom f\subseteq\dom\lsubdiff f\) and \(\prox_{\lambda f}=(\nabla\kernel+\lambda\lsubdiff f)^{-1}\circ\nabla\kernel\).
	\end{enumerateq}
	Then,
	\ref{thm:hypo:J}
	\(\Rightarrow\)
	\ref{thm:hypo:hypo}
	\(\Leftrightarrow\)
	\ref{thm:hypo:hull}
	\(\Leftrightarrow\)
	\ref{thm:hypo:maxmono}
	\(\Leftrightarrow\)
	\ref{thm:hypo:proxcvx}
	\(\Rightarrow\)
	\ref{thm:hypo:subdiff}
	\(\Rightarrow\)
	\ref{thm:hypo:dom}
	always hold.

	The implication
	\ref{thm:hypo:dom}
	\(\Rightarrow\)
	\ref{thm:hypo:hypo}
	holds when \(\relint\conv \dom f\subseteq\interior\X\), and so does
	\ref{thm:hypo:J}
	\(\Leftarrow\)
	\ref{thm:hypo:hypo}
	if in addition \cref{ass:range} is satisfied.
\end{theorem}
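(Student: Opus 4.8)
The backbone of the proof is the Euclidean-type identity \(\prox_{\lambda f}\circ\nabla\kernel*=[\fsubdiff(\kernel+\lambda f)]^{-1}\), obtained from \cref{thm:Jfsubdiff} by post-composing with \(\nabla\kernel*\) and using \(\nabla\kernel\circ\nabla\kernel*=\id\) (\cref{thm:D*}). This reduces every statement to a convex-analytic property of \(h\coloneqq\kernel+\lambda f\), whose canonical extension \(\tilde h=\kernel+\lambda\tilde f\) is proper, lsc, and 1-coercive by \cref{thm:extlsc}, and whose convexity is precisely \ref{thm:hypo:hypo}. I would first dispatch the four-way equivalence \ref{thm:hypo:hypo}\(\Leftrightarrow\)\ref{thm:hypo:hull}\(\Leftrightarrow\)\ref{thm:hypo:maxmono}\(\Leftrightarrow\)\ref{thm:hypo:proxcvx}. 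For \ref{thm:hypo:hypo}\(\Leftrightarrow\)\ref{thm:hypo:hull}: since \(\kernel\) is essentially smooth, 1-coercive, and \(\lambda<\pb\), the first two inequalities of \cref{thm:hull} collapse to equalities, so \(\hull_{\lambda}{f}=\conv(f+\lambda^{-1}\kernel)-\lambda^{-1}\kernel\); hence \(f=\hull_{\lambda}{f}\) iff \(f+\lambda^{-1}\kernel\) coincides with its convex hull, i.e. iff it is convex. The arrows \ref{thm:hypo:hypo}\(\Rightarrow\)\ref{thm:hypo:maxmono}\(\Rightarrow\)\ref{thm:hypo:proxcvx} are then immediate: convexity of \(\tilde h\) makes \(\fsubdiff h\) maximally monotone, whence so is its inverse \(\prox_{\lambda f}\circ\nabla\kernel*\), and maximally monotone operators are convex-valued. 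The returning arrow \ref{thm:hypo:proxcvx}\(\Rightarrow\)\ref{thm:hypo:hypo} is where \cref{thm:proxhull} enters: convex-valuedness collapses \(\conv\prox_{\lambda f}\circ\nabla\kernel*=\prox_{\lambda\hull_{\lambda}{f}}\circ\nabla\kernel*\) into the graph equality \(\fsubdiff h=\fsubdiff(\conv*h)\); \cref{fact:fsubdiff} then forces \(h=\conv*h\) on \(\dom\fsubdiff(\conv*h)\supseteq\relint\dom\conv*h\), and \cref{thm:cxvridom} upgrades this to convexity of \(h\).

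The downstream chain \ref{thm:hypo:proxcvx}\(\Rightarrow\)\ref{thm:hypo:subdiff}\(\Rightarrow\)\ref{thm:hypo:dom} I would read off the now-available convexity of \(h\): on \(\interior\X\), where \(\kernel\) is differentiable, \cref{thm:FPsubdiff:f} gives \(\psubdiff f=\fsubdiff(f+\lambda^{-1}\kernel)\restr_{\interior\X}-\lambda^{-1}\nabla\kernel=\lsubdiff f\), while subdifferentiability of the convex \(\tilde h\) on \(\relint\dom\tilde h=\relint\conv\dom f\) yields \(\relint\conv\dom f\subseteq\dom\lsubdiff f\); statement \ref{thm:hypo:dom} is merely the restriction of \ref{thm:hypo:subdiff} to \(\interior\X\). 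For the converse \ref{thm:hypo:dom}\(\Rightarrow\)\ref{thm:hypo:hypo} under the extra hypothesis \(\relint\conv\dom f\subseteq\interior\X\), the inclusion \(\relint\conv\dom f\subseteq\dom\psubdiff f\) reads, through \cref{thm:FPsubdiff:f}, as \(\fsubdiff h(x)\neq\emptyset\) for every \(x\in\relint\dom\conv*h\); \cref{fact:fsubdiff} then delivers \(h=\conv*h\) there, and \cref{thm:cxvridom} again concludes convexity.

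It remains to close the loop with \ref{thm:hypo:J}. For \ref{thm:hypo:J}\(\Rightarrow\)\ref{thm:hypo:hypo}, combining \ref{thm:hypo:J} with \cref{thm:Jfsubdiff} and the surjectivity \(\range\nabla\kernel=\R^n\) (from 1-coercivity) yields \(\fsubdiff h=\nabla\kernel+\lambda\lsubdiff f\), so that \(\dom\fsubdiff h=\interior\X\cap\dom\lsubdiff f\supseteq\interior\X\cap\relint\conv\dom f\); \cref{fact:fsubdiff} gives \(h=\conv*h\) on this set, and I would propagate the identity to all of \(\relint\conv\dom f\) by a segment/density argument (the prox is nonempty and, by \ref{thm:hypo:J}, valued in \(\interior\X\), so \(\interior\X\cap\relint\conv\dom f\) is nonempty and dense in \(\relint\conv\dom f\)) together with lower semicontinuity, before invoking \cref{thm:cxvridom}. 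Conversely, for \ref{thm:hypo:hypo}\(\Rightarrow\)\ref{thm:hypo:J} under \cref{ass:range}, \cref{cor:resolvent} already supplies the warped-resolvent representation \(\prox_{\lambda f}=(\nabla\kernel+\lambda\psubdiff f)^{-1}\circ\nabla\kernel\); and since \ref{thm:hypo:hypo} has just been shown to imply \ref{thm:hypo:subdiff}, I may replace \(\psubdiff f\) by \(\lsubdiff f\) on \(\interior\X\) and record \(\relint\conv\dom f\subseteq\dom\lsubdiff f\), obtaining both halves of \ref{thm:hypo:J}.

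The recurring technical crux is the boundary of \(\X\): the subdifferential \(\psubdiff f\) lives only on \(\interior\X\) and \(\kernel\) is non-smooth on \(\boundary\X\), so points of \(\relint\conv\dom f\) lying in \(\boundary\X\) cannot be treated directly. This is exactly why \ref{thm:hypo:dom}\(\Rightarrow\)\ref{thm:hypo:hypo} requires \(\relint\conv\dom f\subseteq\interior\X\) and why \ref{thm:hypo:J}\(\Rightarrow\)\ref{thm:hypo:hypo} needs the density step, and it is also the point demanding care in the inclusion \(\relint\conv\dom f\subseteq\dom\lsubdiff f\) of \ref{thm:hypo:subdiff}. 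In every case the decisive move is to establish the pointwise identity \(h=\conv*h\) on \(\relint\dom\conv*h\) and then appeal to \cref{thm:cxvridom}, which is the linchpin turning \emph{agreement with the convex hull on the relative interior} into genuine convexity; correspondingly, I would keep track throughout that the equalities borrowed from \cref{thm:hull} are licensed by \(\kernel\) being essentially smooth, 1-coercive, and \(\lambda<\pb\).
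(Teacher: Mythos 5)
Your proposal follows the paper's own route almost everywhere: the backbone identity \(\prox_{\lambda f}\circ\nabla\kernel*=\bigl[\fsubdiff(\kernel+\lambda f)\bigr]^{-1}\) from \cref{thm:Jfsubdiff}, \cref{thm:extlsc} for lower semicontinuity of \(\lambda\tilde f+\kernel\), \cref{thm:hull} for ``\ref{thm:hypo:hypo} \(\Leftrightarrow\) \ref{thm:hypo:hull}'', the combination of \cref{thm:proxhull}, \cref{fact:fsubdiff}, and \cref{thm:cxvridom} for ``\ref{thm:hypo:proxcvx} \(\Rightarrow\) \ref{thm:hypo:hypo}'', and \cref{cor:resolvent} for ``\ref{thm:hypo:hypo} \(\Rightarrow\) \ref{thm:hypo:J}''; your segment/density argument for ``\ref{thm:hypo:J} \(\Rightarrow\) \ref{thm:hypo:hypo}'' is a valid, indeed more explicit, treatment of a point the paper glosses over. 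The genuine gap is in ``\ref{thm:hypo:proxcvx} \(\Rightarrow\) \ref{thm:hypo:subdiff}'', namely the inclusion \(\relint\conv\dom f\subseteq\dom\lsubdiff f\). You derive it from ``subdifferentiability of the convex \(\tilde h\) on \(\relint\dom\tilde h\)'', but nonemptiness of \(\fsubdiff(\lambda\tilde f+\kernel)(x)\) produces a subgradient of the \emph{sum}; to extract from it an element of \(\lsubdiff f(x)\) one must split off \(\kernel\), and any such splitting (e.g.\ via \cref{thm:FPsubdiff:f}) is available only at \(x\in\interior\X\). The implication is claimed unconditionally, and the uncovered case is nonvacuous: since \ref{thm:hypo:hypo} makes \(\dom f\) convex, either \(\dom f\) meets \(\interior\X\)---whence \(\relint\conv\dom f\subseteq\interior\X\) by \cite[Cor. 6.5.2]{rockafellar1970convex}, which is your easy case---or \(\dom f\subseteq\X\setminus\interior\X\) entirely, as for an \(f\) supported on the face \(\R_+\times\set{0}\) of \(\X=\R_+^2\) with the entropy kernel (exactly the configuration in the paper's remark following the theorem). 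At such boundary points \(\lsubdiff\kernel(x)=\emptyset\) by essential smoothness and \(\kernel\) is not locally Lipschitz, so there is neither a subgradient of \(\kernel\) to subtract nor a sum rule in \(\R^n\) to invoke; flagging the boundary as ``the recurring technical crux'', as your last paragraph does, is not a substitute for an argument.

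The missing piece---which is the real substance of the paper's proof of ``\ref{thm:hypo:hypo} \(\Rightarrow\) \ref{thm:hypo:subdiff}''---is a dimension-reduction argument for the boundary case: when \(\dom f\subseteq\X\setminus\interior\X\), pass to the affine hull of \(\dom f\) via an invertible linear map \(L\) and set \(\hat f\coloneqq\tilde f\circ L^{-1}\), \(\hat\kernel\coloneqq\kernel\circ L^{-1}\). Relative to this subspace the points of \(L\relint\dom f\) are \emph{interior} to \(\dom\hat\kernel\), so the convex \(\hat\kernel\) is locally Lipschitz there, and the limiting-subdifferential sum rule \cite[Ex. 10.10]{rockafellar1998variational} gives \(\emptyset\neq\fsubdiff(\lambda\hat f+\hat\kernel)\subseteq\lambda\lsubdiff\hat f+\lsubdiff\hat\kernel\) on \(L\relint\dom f\); hence \(\lsubdiff\hat f\neq\emptyset\) there, and pulling back through the invertible \(L\) yields \(\relint\conv\dom f\subseteq\dom\lsubdiff f\). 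Note also that this hole propagates to your proof of ``\ref{thm:hypo:hypo} \(\Rightarrow\) \ref{thm:hypo:J}'', which quotes \ref{thm:hypo:subdiff} for its first conjunct; it is harmless there only because that implication is asserted under the additional hypothesis \(\relint\conv\dom f\subseteq\interior\X\), which excludes the problematic case. The remainder of your proposal is correct and coincides with the paper's argument.
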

\begin{proof}
	We first prove the implications without additional assumptions.
	It is helpful to remind that \(\dom f=\dom(\lambda f+\kernel)\), and that under the assumptions in the statement \(\lambda\tilde f+\kernel\) is 1-coercive, where \(\func{\tilde f}{\R^n}{\Rinf}\) is the canonical extension of \(f\) as in \cref{def:fext}, namely such that \(\tilde{f}(x)=f(x)\) for \(x\in\X\) and \(\tilde{f}(x)=\infty\) for \(x\in\R^n\setminus\X\).
	\begin{itemize}
	\item ``\ref{thm:hypo:J} \(\Rightarrow\) \ref{thm:hypo:hypo}''
		One has that
		\[
			(\nabla\kernel+\lambda\lsubdiff f)^{-1}
		=
			\prox_{\lambda f}\circ\nabla\kernel*
		=
			\fsubdiff(\kernel+\lambda f)^{-1},
		\]
		where the last identity follows from \cref{thm:Jfsubdiff}, overall implying that
		\(
			\fsubdiff(\kernel+\lambda f)
		=
			\nabla\kernel+\lambda\lsubdiff f
		\).
		Hence
		\(
			\fsubdiff(\lambda f+\kernel)\neq\emptyset
		\)
		on
		\(
			\relint\conv\dom(\lambda f+\kernel)
		=
			\relint\conv\dom f\subseteq\dom\lsubdiff f
		\),
		which entails through \cref{fact:fsubdiff} that \(f+\tfrac{1}{\lambda}\kernel=\conv*(f+\tfrac{1}{\lambda}\kernel)\) on \(\relint\conv\dom(f+\tfrac{1}{\lambda}\kernel)\).
		Invoking \cref{thm:cxvridom} concludes the proof.

	\item ``\ref{thm:hypo:hypo} \(\Leftrightarrow\) \ref{thm:hypo:hull}''
		Follows from \cref{thm:hull}.

	\item
		``\ref{thm:hypo:hypo} \(\Rightarrow\) \ref{thm:hypo:maxmono}''
		The assumption together with \cref{thm:extlsc} ensures that \(\lambda\tilde{f}+\kernel\) is proper lsc (on \(\R^n\)) and convex.
		Hence
		\[
			\prox_{\lambda f}\circ\nabla\kernel*
		=
			\prox_{\lambda \tilde{f}}\circ\nabla\kernel*
		=
			\bigl(
				\lsubdiff(\lambda\tilde{f}+\kernel)
			\bigr)^{-1},
		\]
		where the last equality follows from \cref{thm:Jfsubdiff} and the convexity of \(\lambda\tilde{f}+\kernel\).
		The operator \(\lsubdiff(\lambda\tilde{f}+\kernel)\) is maximally monotone in view of \cite[Thm. 12.17]{rockafellar1998variational}, and therefore so is its inverse.

	\item
		``\ref{thm:hypo:maxmono} \(\Leftrightarrow\) \ref{thm:hypo:proxcvx}''
		The right implication is clear.
		The converse direction follows from \cref{thm:proxhull} together with the fact that \(\hull f+\lambda^{-1}\kernel\) is convex and the consequent maximal monotonicity of \(\prox_{\lambda \hull}\circ\nabla\kernel*\); see the proven implication ``\ref{thm:hypo:hypo} \(\Rightarrow\) \ref{thm:hypo:maxmono}'' or \cite[Thm. 4.2]{kan2012moreau}.

	\item ``\ref{thm:hypo:proxcvx} \(\Rightarrow\) \ref{thm:hypo:hypo}''
		It suffices to show that \(\lambda\tilde{f}+\kernel\) is convex.
		Note that
		\begin{align*}
			\fsubdiff(\lambda\tilde{f}+\kernel)
		&=
			\left(\prox_{\lambda\tilde{f}}\circ\nabla\kernel*\right)^{-1}
		=
			\left(\prox_{\lambda\hull_\lambda{\tilde{f}}}\circ\nabla\kernel*\right)^{-1}\\
		&=
			\fsubdiff(\lambda\hull_\lambda{\tilde{f}}+\kernel)
		=
			\lsubdiff\conv(\lambda\tilde{f}+\kernel),
		\end{align*}
		where the first and the third equalities follow from \cref{thm:Jfsubdiff}, while the second one owes to \ref{thm:hypo:proxcvx} and \cref{thm:proxhull}.
		We claim that \(\fsubdiff(\lambda\tilde{f}+\kernel)\neq\emptyset\) on \(\relint\conv\dom f\).
		In turn, \(\lambda\tilde{f}+\kernel=\conv(\lambda\tilde{f}+\kernel)\) on \(\relint\conv\dom f\) and therefore \(\lambda\tilde{f}+\kernel\) is convex owing to \cref{thm:cxvridom}.
		Indeed, notice that
		\(
			\dom\conv(\lambda\tilde{f}+\kernel)
		=
			\conv\dom(\lambda\tilde{f}+\kernel)
		=
			\conv(\dom\tilde{f}\cap\dom\kernel)
		=
			\conv\dom f
		\).
		Therefore \(\relint\conv\dom f=\relint\dom\conv(\lambda\tilde{f}+\kernel)\subseteq \dom\lsubdiff \conv(\lambda\tilde{f}+\kernel)\).

	\item ``\ref{thm:hypo:hypo} \(\Rightarrow\) \ref{thm:hypo:subdiff}''
		The convexity of \(f+\lambda^{-1}\kernel\) enforces \(\dom f\) to be convex.
		We consider two cases:
		\begin{itemize}
		\item
			Assume first that \(\dom f\not\subseteq\X\setminus\interior\X\), so that
			\(\relint\conv\dom f\subseteq\relint\X=\interior\X\) by \cite[Cor. 6.5.2]{rockafellar1970convex}.
			So for \(x\in\interior\X\),
			\(
				\lsubdiff f(x)
			=
				\lsubdiff(f+\lambda^{-1}\kernel)(x)-\lambda^{-1}\nabla\kernel(x)
			=
				\fsubdiff(f+\lambda^{-1}\kernel)(x)-\lambda^{-1}\nabla\kernel(x)
			=
				\psubdiff f(x)
			\)
			by \cref{thm:FPsubdiff:f}.
			Moreover, the convexity of \(f+\lambda^{-1}\kernel\) ensures that in particular for \(x\in\relint\dom f\subseteq\interior\X\), \(\lsubdiff f(x)=\lsubdiff(f+\lambda^{-1}\kernel)(x)-\lambda^{-1}\nabla\kernel(x)\neq\emptyset\).

		\item
			Suppose instead that \(\dom f\subseteq\X\setminus\interior\X\) holds.
			In this case, the condition \(\lsubdiff f=\psubdiff f\) on \(\interior\X\) holds vacuously, since \(\dom f\cap\interior\X=\emptyset\) in this case.
			To show nonemptiness of \(\lsubdiff f\) on \(\relint\conv\dom f\), without loss of generality (that is, up to replacing \(f\) with \(f({}\cdot{}+x_0)\) for some \(x_0\in\dom f\)), let the affine hull of \(\dom f\) be a vector subspace \(V\) of \(\R^n\) of dimension \(m\leq n\).
			Let \(\func{L}{V}{\R^m}\) be an invertible linear transformation, and define \(\func{\hat f}{\R^m}{\Rinf}\) and \(\func{\hat\kernel}{\R^m}{\Rinf}\) as \(\hat f=\tilde f\circ L^{-1}\) and \(\hat\kernel=\kernel\circ L^{-1}\).
			Then, both \(\lambda\hat f+\hat\kernel\) and \(\hat\kernel\) are convex functions on \(\R^m\) with
			\(
				\interior\dom\hat\kernel
			\supseteq
				\interior\dom\hat f
			=
				\relint\dom\hat f
			=
				\relint(L\dom f)
			=
				L\relint\dom f
			\),
			where the last identity follows from \cite[Thm. 6.6]{rockafellar1970convex}.
			Since \(\hat\kernel\) is convex, it is strictly continuous on \(\interior\dom\hat\kernel\supseteq L\relint\dom f\), hence
			\(
				\emptyset
			\neq
				\fsubdiff(\lambda\hat f+\hat\kernel)
			=
				\lsubdiff(\lambda\hat f+\hat\kernel)
			\subseteq
				\lambda\lsubdiff\hat f+\lsubdiff\hat\kernel
			\)
			on \(L\relint\dom f\), with inclusion owing to \cite[Ex. 10.10]{rockafellar1998variational}.
			Since \(\fsubdiff(\lambda\hat f+\hat\kernel)\) is nonempty there, so is \(\lsubdiff\hat f\).
			This shows that \(L\relint\dom f\subseteq\dom\lsubdiff\hat f=L\dom\lsubdiff f\).
			Since \(L\) is invertible, the claimed inclusion \(\relint\conv\dom f=\relint\dom f\subseteq\dom\lsubdiff f\) follows.
		\end{itemize}

	\item ``\ref{thm:hypo:subdiff} \(\Rightarrow\) \ref{thm:hypo:dom}''
		Suppose that \(\relint\conv\dom f\cap\interior\X\neq\emptyset\), otherwise the claim holds vacuously.
		Then for \(x\in \relint\conv\dom f\cap\interior\X\), \ref{thm:hypo:subdiff} implies \(\psubdiff f(x)=\lsubdiff f(x)\neq\emptyset\).
	\end{itemize}

	\noindent
	In what follows, suppose that \(\relint\conv\dom f\subseteq\interior\X\).

	\begin{itemize}
	\item
		``\ref{thm:hypo:dom} \(\Rightarrow\) \ref{thm:hypo:subdiff}''
		The assumption in \ref{thm:hypo:dom} then implies that
		\[
			\psubdiff f
		=
			\fsubdiff(f+\lambda^{-1}\kernel)-\lambda^{-1}\nabla\kernel
		\neq
			\emptyset
		\quad
			\text{on }
			\relint\conv\dom f,
		\]
		and therefore \(\conv*(f+\lambda^{-1}\kernel)=\conv(f+\lambda^{-1}\kernel)=(f+\lambda^{-1}\kernel)\) on \(\relint\conv\dom f\) by \cref{thm:cvxext,fact:fsubdiff}.
		Then \(f+\lambda^{-1}\kernel\) is convex in view of \cref{thm:cxvridom}.
		So for \(x\in\interior\X\),
		\begin{align*}
			\lsubdiff f(x)
			&=
				\lsubdiff(f+\lambda^{-1}\kernel)(x)-\lambda^{-1}\nabla \kernel(x)
			=
				\fsubdiff(f+\lambda^{-1}\kernel)(x)-\lambda^{-1}\nabla \kernel(x)\\
			&=
				\psubdiff f(x),
		\end{align*}
		which means that \(\relint\dom f=\relint\dom(f+\lambda^{-1}\kernel)\subseteq\dom\lsubdiff f\) and \(\lsubdiff f=\psubdiff f\) on \(\interior\X\) as desired.

	\item
		``\ref{thm:hypo:subdiff} \(\Rightarrow\) \ref{thm:hypo:hypo}''
		Observe that \(\relint\conv\dom f=\relint\dom\conv f\neq\emptyset\); the identity follows from \cite[Prop. 9.8(iv)]{bauschke2017convex} and \cite[Thm. 6.3]{rockafellar1970convex}, and nonemptiness from properness of \(f\) together with \cite[Thm. 6.2]{rockafellar1970convex}.
		Pick \(x_0\in\relint\conv\dom f=\relint\dom\conv(\lambda f+\kernel)\subseteq\interior\X\).
		Then,
		\[
			\emptyset
		\neq
			\lsubdiff f(x_0)
		=
			\psubdiff f(x_0)
		=
			\fsubdiff[f+\tfrac{1}{\lambda}\kernel](x_0)
			-
			\tfrac{1}{\lambda}\nabla\kernel(x_0),
		\]
		where the second identity again uses \cref{thm:FPsubdiff:f}.
		This shows that \(\fsubdiff[f+\tfrac{1}{\lambda}\kernel]\) agrees with \(\lsubdiff[f+\tfrac{1}{\lambda}\kernel]\) on \(\relint\dom f\) and is nonempty there, which appealing to \cref{fact:fsubdiff} implies that \(f+\tfrac{1}{\lambda}\kernel=\conv*(f+\tfrac{1}{\lambda}\kernel)\) on \(\relint\dom\conv(f+\tfrac{1}{\lambda}\kernel)\).
		Invoking \cref{thm:cxvridom} concludes the proof.
	\end{itemize}

	\noindent
	To conclude, let us also assume that \cref{ass:range} holds.
	\begin{itemize}
	\item ``\ref{thm:hypo:hypo} \(\Rightarrow\) \ref{thm:hypo:J}''
		Appealing to \cref{thm:Jfsubdiff}, one has that \(\bar x\in\prox_{\lambda f}(\bar y)\) iff \(\nabla\kernel(\bar y)\in\fsubdiff(\lambda f+\kernel)(\bar x)\), and such subdifferential equals \(\lsubdiff(\lambda f+\kernel)(\bar x)\) by convexity.
		Since \(\bar x\in\interior\X\) by \cref{ass:range}, it also coincides with \(\nabla\kernel(\bar x)+\lambda\lsubdiff f(\bar x)\), leading to the sought conclusion.
	\qedhere
	\end{itemize}
\end{proof}

\begin{remark}
	We make some comments on the conditions in \cref{thm:hypo}:
	\begin{enumerate}
	\item
		The additional assumption \(\relint\conv \dom f\subseteq\interior\X\) is unavoidable for the implication ``\ref{thm:hypo:subdiff} \(\Rightarrow\) \ref{thm:hypo:hypo}''.
		Consider \(\func{\kernel}{\R^2}{\Rinf}\) given by
		\begin{align*}
			\kernel(x)
		={} &
			\begin{cases}
				x_1\ln x_1+x_2\ln x_2 & \text{if }x\geq0
			\\
				\infty & \text{otherwise,}
			\end{cases}
		\shortintertext{and \(\func{f}{\X=\R_+^2}{\Rinf}\) defined as}
			f(x)
		={} &
			\begin{cases}
				\sqrt{1-(1-x_1)^2} & \text{if } x_2=0
			\\
				\infty & \text{otherwise,}
			\end{cases}
		\end{align*}
		with the convention that \(0\ln(0)=0\).
		One has \(\relint\conv\dom f=\R_{++}\times\set{0}\) while \(\interior\X=\R_{++}\times\R_{++}\), so that \(\relint\conv\dom f\not\subseteq\interior\X\).
		For every \(\lambda>0\), the conditions in \cref{thm:hypo:subdiff} are met, having \(\lsubdiff f=\psubdiff f=\emptyset\) on \(\interior\X\) and
		\(
			\relint\conv\dom f
		=
			\R_{++}\times\set{0}
		\subseteq
			\R_{+}\times\set{0}
		=
			\dom\lsubdiff f
		\).
		However, \cref{thm:hypo:hypo} is not verified, as \(f+\lambda^{-1}\kernel\) is clearly nonconvex.

	\item
		When \(\X=\R^n\), the assumption \(\relint\conv \dom f\subseteq\dom\lsubdiff f\) in the statements of \cref{thm:hypo:subdiff,thm:hypo:J} is superfluous.

		Regarding \cref{thm:hypo:subdiff}, having \(\lsubdiff f=\psubdiff f\) everywhere implies through \cref{thm:FPsubdiff:f} that
		\(
			\lsubdiff(f+\lambda^{-1}\kernel)
		=
			\fsubdiff(f+\lambda^{-1}\kernel)
		\)
		everywhere, which further entails the monotonicity of \(\lsubdiff(f+\lambda^{-1}\kernel)\) and equivalently the convexity of \(f+\lambda^{-1}\kernel\).
		Consequently, \(\dom f\) is convex and
		\(
			\lsubdiff f
		=
			\lsubdiff(f+\lambda^{-1}\kernel)
		-
			\lambda^{-1}\nabla\kernel
		\neq
			\emptyset
		\)
		on \(\relint\dom f=\relint\conv\dom f\).

		Similarly, when \(\X=\R^n\) the condition \(\prox_{\lambda f}=(\nabla\kernel+\lambda\lsubdiff f)^{-1}\circ\nabla\kernel\) in \cref{thm:hypo:J} implies through \cref{thm:monotone} that the operator \(\lsubdiff(\lambda f+\kernel)\) is monotone on the whole \(\R^n\), whence the convexity of \(\lambda f+\kernel\).
		In turn, the inclusion \(\relint\conv \dom f\subseteq\dom\lsubdiff f\) readily follows.

	\item
		\Cref{ass:range} is needed for the implication ``\ref{thm:hypo:J} \(\Leftarrow\) \ref{thm:hypo:hypo}''.
		Consider any kernel \(\kernel\) whose domain \(\X\) is not open, and let \(f=\indicator_{\set{\bar x}}\restr_{\X}\) for some \(\bar x\in\X\setminus\interior\X\).
		Then, \(f+\lambda^{-1}\kernel\) is trivially convex and \(\prox_{\lambda f}\equiv\set{\bar x}\).
		However, \(\bar x\notin\range(\nabla\kernel+\lambda\lsubdiff f)^{-1}\circ\nabla\kernel\) because \(\bar x\in\X\setminus\interior\X\).
	\end{enumerate}
\end{remark}

\subsection[Convexity, B- and a-firm nonexpansiveness]{Convexity, \B- and \a-firm nonexpansiveness}\label{sec:FNE}%
In this section, we investigate a non-Euclidean counterpart of the following classic correspondence; see \cite[Thm. 12.17]{rockafellar1998variational}, \cite[Thm. 1]{rockafellar2021characterizing}, and \cite[Thm. 3.17]{wang2010chebyshev} for a proof.

\begin{fact}\label{fact:classic}%
	Let \(\func{f}{\R^n}{\Rinf}\) be a proper, lsc, and \(\j\)-prox-bounded function.
	Then the following are equivalent:
	\begin{enumerateq}
	\item \label{fact:classic:cvx}%
		\(f\) is convex.
	\item \label{fact:classic:mono}%
		\(\lsubdiff f\) is (maximally) monotone.
	\item \label{fact:classic:envcvx}%
		\((\exists \lambda>0)\) \(\Eenv_\lambda{f}\) is convex.
	\item \label{fact:classic:fne}%
		\((\exists \lambda>0)\) \(\Eprox_{\lambda f}\) is firmly nonexpansive.
	\item \label{fact:classic:envcoco}%
		\((\exists \lambda>0)\) \(\Eenv_\lambda f\) is differentiable and \(\nabla(\lambda\Eenv_\lambda f)\) is FNE.
	\end{enumerateq}
\end{fact}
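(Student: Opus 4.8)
The plan is to split the five conditions into a convexity core, \ref{fact:classic:cvx}--\ref{fact:classic:mono}, and a regularized cluster, \ref{fact:classic:envcvx}--\ref{fact:classic:envcoco}, settle each internally, and then bridge them. The equivalence between convexity of \(f\) and monotonicity of \(\lsubdiff f\) is the classical subgradient characterization of convexity \cite[Thm.~12.17]{rockafellar1998variational}: for convex \(f\) the subdifferential \(\lsubdiff f\) is maximally monotone, and conversely monotonicity of \(\lsubdiff f\) forces convexity. The implications descending from convexity are routine: \ref{fact:classic:cvx}~\(\Rightarrow\)~\ref{fact:classic:envcvx} because \(\Eenv_{\lambda}f\) is then the infimal convolution of the proper lsc convex functions \(f\) and \(\tfrac{1}{2\lambda}\norm{{}\cdot{}}^2\), while \ref{fact:classic:cvx}~\(\Rightarrow\)~\ref{fact:classic:fne} because \(\Eprox_{\lambda f}=(\id+\lambda\lsubdiff f)^{-1}\) is the resolvent of a maximally monotone operator and hence firmly nonexpansive for every \(\lambda>0\).

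For the regularized cluster I would exploit the Euclidean specialization (\(\kernel=\j\), \(\nabla\kernel=\id\)) of \cref{thm:envconj}, namely \(\lambda\Eenv_{\lambda}f=\j-(\lambda f+\j)^*\). Since the summand \((\lambda f+\j)^*\) is convex for \emph{any} \(f\), the envelope \(\Eenv_{\lambda}f\) is convex if and only if both \(\lambda\Eenv_{\lambda}f\) and \(\j-\lambda\Eenv_{\lambda}f\) are convex; by the Baillon--Haddad theorem \cite{baillon1977quelques} this is equivalent to \(\lambda\Eenv_{\lambda}f\) being of class \(C^{1,1}\) with firmly nonexpansive gradient, that is, to \(\nabla\Eenv_{\lambda}f\) being \(\lambda\)-cocoercive. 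This settles \ref{fact:classic:envcvx}~\(\Leftrightarrow\)~\ref{fact:classic:envcoco}. The link to \ref{fact:classic:fne} runs through the prox--envelope identity \(\Eprox_{\lambda f}=\id-\lambda\nabla\Eenv_{\lambda}f\), valid wherever \(\Eenv_{\lambda}f\) is differentiable: convexity of the envelope renders \(\Eprox_{\lambda f}\) single-valued and firmly nonexpansive, whereas firm nonexpansiveness of \(\Eprox_{\lambda f}\) makes \(\Eenv_{\lambda}f\) differentiable on all of \(\R^n\) (its domain under prox-boundedness) with \(\lambda\)-cocoercive gradient.

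It remains to bridge the clusters, and since the easy direction out of convexity is already in hand, the decisive step is the converse---recovering global convexity of \(f\) from a single-scale regularized property. Here I would invoke the resolvent representation \eqref{eq:level prox}, \(\Eprox_{\lambda f}=(\id+\lambda\Epsubdiff f)^{-1}\): firm nonexpansiveness of \(\Eprox_{\lambda f}\) together with \(\dom\Eprox_{\lambda f}=\R^n\) forces \(\Epsubdiff f\) to be maximally monotone by Minty's theorem, and maximal monotonicity of the level proximal subdifferential is equivalent to convexity of \(f\) \cite{luo2024level}; alternatively one may cite Rockafellar's direct characterization \cite[Thm.~1]{rockafellar2021characterizing}. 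This converse is the main obstacle: every implication out of convexity is elementary convex calculus and the intra-cluster equivalences collapse once the identity \(\lambda\Eenv_{\lambda}f=\j-(\lambda f+\j)^*\) and Baillon--Haddad are available, but propagating a regularized property at one \(\lambda\) to genuine convexity of the possibly nonconvex \(f\) is delicate. This is strictly harder than the weak-convexity equivalences of \cref{thm:hypo}, where it suffices that \(f+\lambda^{-1}\kernel\) be convex and the warped resolvent of \(\lsubdiff f\) already does the job; here one must recover monotonicity of \(\lsubdiff f\) itself, which is exactly why the level proximal subdifferential \(\Epsubdiff f\)---whose maximal monotonicity characterizes convexity---is the right instrument.
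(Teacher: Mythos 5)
Your proposal is correct and follows essentially the same route as the paper, which states this result as a Fact proved by citation---\cite[Thm.~12.17]{rockafellar1998variational} for \ref{fact:classic:cvx}\(\Leftrightarrow\)\ref{fact:classic:mono}, \cite[Thm.~1]{rockafellar2021characterizing} for the firm-nonexpansiveness bridge, and \cite[Thm.~3.17]{wang2010chebyshev} for the envelope equivalences---and whose \cref{fig:cvx} assembles precisely the ingredients you invoke (Baillon--Haddad \cite{baillon1977quelques} for \ref{fact:classic:envcvx}\(\Leftrightarrow\)\ref{fact:classic:envcoco}, and the level-proximal resolvent identity \eqref{eq:level prox} with \cite{luo2024level} for recovering convexity from \ref{fact:classic:fne}). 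Your write-up simply makes explicit the connective arguments that the paper delegates to these references.
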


The same correspondence does not hold for Bregman proximal operators and Moreau envelopes, owing to a duality gap pointed out by Laude et al. \cite{laude2023dualities}.
This gap makes it necessary to introduce two distinct non-Euclidean extensions of firm nonexpansiveness; following the terminology of \cite{laude2023dualities}, we refer to these as the \emph{Bregman (\B)} and \emph{anisotropic (\a)} variants; see \cref{def:BFNE,def:aFNE} respectively.

\begin{definition}[\B-firm nonexpansiveness {\cite[Def. 2.5(iii)]{borwein2011characterization}}]\label{def:BFNE}%
	Let \(\kernel\) be a dgf with \(\X=\dom\kernel\) and \(\Y=\interior\dom\kernel\).
	An operator \(\func{T}{\Y}{\X}\) is said to be \emph{\B-firmly nonexpansive (\B-FNE)} if
	\(\range T\subseteq\interior\X\) and\footnote{%
		One may wonder why we do not simply consider operators \(\func{T}{\Y}{\Y}\), which would render the range specification unnecessary.
		The chosen convention, allowing \(\func{T}{\Y}{\X}\) with \(\range T \subseteq \interior\X\), is intentionally adopted to remain fully compatible with left Bregman proximal mappings; see also \cref{rem:aFNE:X}.%
	}%
	\begin{equation}\label{eq:BFNE}
		\DD(T\bar y,Ty)
	\leq
		\innprod{
			T\bar y
			-
			Ty
		}{
			\nabla\kernel(\bar y)
			-
			\nabla\kernel(y)
		}
	\quad
		\forall\bar y,y\in\Y.
	\end{equation}
\end{definition}

This definition generalizes the usual notion of firm nonexpansiveness, recovered when \(\kernel=\j\), and is closely related to the \(\nabla\kernel\)-firm nonexpansiveness as in \cite[Def. 4.1]{wang2022bregman}.
In particular, we point out that, when \(\kernel\) is Legendre, \B-firm nonexpansiveness of \(T\) amounts to \(\nabla\kernel\)-firm nonexpansiveness of \(T\circ\nabla\kernel*\).
\B-firmly nonexpansive operators, possibly set-valued, are referred to as ``\(\D\)-firm'' in \cite[Def. 3.4]{bauschke2003bregman}.
Our restriction to single-valued mappings, same as in \cite{borwein2011characterization}, entails no loss of generality, since single-valuedness is automatically ensured whenever \(\kernel\) is strictly convex on the interior of its domain; see \cite[Prop. 3.5(iii)]{bauschke2003bregman}.
The chosen terminology is compatible with the acronym ``BFNE'' used in \cite{borwein2011characterization}, and is further inspired by \cite{laude2023dualities} to distinguish this notion from the complementary \emph{anisotropic} concept which we abbreviate as ``\a''.

\begin{definition}[\a-firm nonexpansiveness]\label{def:aFNE}%
	Let \(\kernel\) be a dgf with \(\X=\dom\kernel\) and \(\Y=\interior\dom\kernel\).
	We say that \(\func{T}{\Y}{\X}\) is \emph{\a-firmly nonexpansive} (\emph{\a-FNE}) if
	\begin{equation}\label{eq:aFNE}
	\kernel(\bar y)
	+
	\kernel(y)
\geq
	\kernel((\id-T)\bar y+Ty)
	+
	\kernel((\id-T)y+T\bar y)
\quad
	\forall \bar y,y\in\Y.
	\end{equation}
\end{definition}

By looking at the right-hand side of \eqref{eq:aFNE} one may wonder whether the definition only makes sense when \(\kernel\) has full domain.
Surprisingly, however, gradients of \B-smooth convex functions \(\func{f}{\X}{\R}\), when composed with \(\nabla\kernel*\), constitute a prominent nontrivial class of \a*-FNE operators, as we will see in \cref{thm:Bcoco}.
A simple surprising consequence of this result is the fact that any combination of the form \(\nabla\kernel(\bar y)+\nabla f(y)-\nabla f(\bar y)\) with \(\bar y,y\in\interior\dom\kernel\) always belongs to \(\dom\kernel*\).
We defer the details to \cref{thm:Bcoco}, and first draw some observations.

\begin{remark}\label{rem:aFNE}%
	Some comments are in order.
	\begin{enumerate}
	\item
		\emph{\a-/\B-FNE vs FNE.}
		By suitably expanding the squares, it is easy to see that the inequalities defining \B- and \a-FNE mappings both reduce to the classical firm nonexpansiveness (i.e., 1-cocoercivity) inequality \(\innprod{T\bar y-Ty}{\bar y-y}\geq\norm{T\bar y-Ty}^2\) when \(\kernel=\j\).
		For general \(\kernel\), however, the two notions no longer coincide.
		This mismatch is precisely what underlies the gap highlighted in \cref{fig:cvx} between the convexity of a function and that of its envelope; see \cref{sec:dualitygap}.

	\item \label{rem:aFNE:X}%
		\emph{Target set \(\X\).}
		Compliance with \eqref{eq:BFNE} forces the image of a \B-FNE operator to lie in (the interior of) \(\X\).
		For \a-FNE operators, the same holds up to a translation, in a way that parallels the relationship between vector subspaces and affine subspaces in linear algebra.
		Specifically, if \(T\) admits a fixed point \(\bar y\in\Y\) (i.e., \(T\bar y=\bar y\)), then \eqref{eq:aFNE} directly forces \(T(\Y)\subseteq\dom\kernel\), since the arguments of \(\kernel\) on the right-hand side reduce to \(Ty\) and \(\bar y\) respectively.
		In the absence of a fixed point, one can always pass to the translated operator \(\hat T \coloneqq T + \hat y - T\hat y\) for any reference point \(\hat y \in \Y\): the operator \(\hat T\) satisfies \eqref{eq:aFNE} whenever \(T\) does, and \(\hat y\) is by construction a fixed point of \(\hat T\).\
		Restricting the target set to \(\X\) is thus natural both in our Bregman proximal map setting and from a fixed-point perspective---the latter being a central focus in analyses involving firm nonexpansiveness---just as it is natural in linear algebra to work with a vector subspace rather than an arbitrary affine subspace when a distinguished basepoint is available.

	\item \label{rem:aFNE:singlevalued}%
		\emph{Set-valued extensions.}
		\Cref{def:aFNE} admits a natural extension to set-valued operators.
		As in the case of \B-firm nonexpansiveness, this extension introduces no additional generality whenever \(\kernel\) is strictly convex on \(\Y\), as the operator must be single-valued in this case (the argument follows along the lines of the proof of \cref{thm:aFNE:C0}).
	\end{enumerate}
\end{remark}

\begin{lemma}[main properties of \a-FNE operators]\label{thm:aFNE:mainprop}%
	Let \(\kernel\) be a dgf with \(\X=\dom\kernel\) and \(\Y=\interior\dom\kernel\), and let \(\func{T}{\Y}{\X}\) be \a-FNE.
	Then, the following hold:
	\begin{enumerate}
	\item \label{thm:aFNE:I-T}%
		\(\id-T\) is \a-FNE.
	\item \label{thm:aFNE:Tmonotone}%
		\(T\) is \(\nabla\kernel\)-monotone in the sense of \cite[Def. 2.6]{borwein2011characterization}, namely
		\[
			\innprod{T\bar y-Ty}{\nabla\kernel(\bar y)-\nabla\kernel(y)}
		\geq
			0
		\quad
			\forall \bar y, y\in\Y.
		\]
		In particular, when \(\kernel\) is Legendre, \(\func{S\coloneqq T\circ\nabla\kernel*}{\Y*}{\X}\) and \(\nabla\kernel*-S\) are monotone mappings, and thus
		\[
			0
		\leq
			\innprod{S\bar\eta-S\eta}{\bar\eta-\eta}
		\leq
			\DD*(\bar\eta,\eta)
		\quad
			\forall \bar\eta,\eta\in\Y*.
		\]
	\item \label{thm:aFNE:C0}%
		If \(\kernel\) is strictly convex on \(\Y\), then \(T\) is continuous.
	\end{enumerate}
\end{lemma}
\begin{proof}~
	\begin{itemize}
	\item ``\ref{thm:aFNE:I-T}''
		Clear after observing that \eqref{eq:aFNE} is unaffected by replacing \(T\) with \(\id-T\).

	\item ``\ref{thm:aFNE:Tmonotone}''
		Let \(\bar x\coloneqq T\bar y\) and \(x\coloneqq Ty\).
		Lower bounding the right-hand side of \eqref{eq:aFNE} with gradient inequalities yields that
		\[
			\kernel(\bar y)
			+
			\kernel(y)
		\geq
			\kernel(\bar y+x-\bar x)
			+
			\kernel(y+\bar x-x)
		\geq
			\kernel(\bar y)
			+
			\kernel(y)
			+
			\innprod{\nabla\kernel(\bar y)-\nabla\kernel(y)}{x-\bar x}.
		\]
		Suitably rearranging yields the claimed \(\nabla\kernel\)-monotonicity.
		In turn, the claim on the mapping \(S\) when \(\kernel\) is Legendre follows from the \(\nabla\kernel\)-monotonicity of \(T\) and \(\id-T\), owing to the previous point, and consequent monotonicity of \(S\) and \(\nabla\kernel*-S\).

	\item ``\ref{thm:aFNE:C0}''
		Adopting the notation of the above proof, note that
		\[
			\tfrac{1}{2}
			\kernel(\bar y)
			+
			\tfrac{1}{2}
			\kernel(y)
		\geq
			\tfrac{1}{2}
			\kernel(\bar y+x-\bar x)
			+
			\tfrac{1}{2}
			\kernel(y+\bar x-x)
		\geq
			\kernel\bigl(\tfrac{\bar y+y}{2}\bigr),
		\]
		where the first inequality is \a-firm nonexpansiveness, and the second one convexity of \(\kernel\).
		By observing that the second inequality is strict whenever \(\kernel\) is strictly convex and \(x-\bar x\neq 0\), a simple limiting argument with \(y\to\bar y\) proves the claimed continuity.
	\qedhere
	\end{itemize}
\end{proof}

We will now show that gradients of \B*-smooth convex functions constitute a prominent class of \a-FNE operators when composed with \(\nabla\kernel\).
Remarkably, this is shown to be true without 1-coercivity of neither \(\kernel\) nor \(\kernel*\), which confirms the significance of \cref{def:aFNE} even when \(\kernel\) does not have full domain.
Similarly to the Euclidean case, \a-firm nonexpansiveness of the gradient turns out to be a ``symmetrized'' version of a cocoercivity inequality for the function, first discovered in \cite[Prop. 5.5]{dragomir2021methodes} in the Bregman setting and recently generalized in \cite[Thm. 4.10]{themelis2026natural}.
Not only does the result below extend these developments beyond the 1-coercive case, but it also resolves an open question posed in \cite{themelis2026natural}, namely whether the cocoercivity inequality alone suffices to guarantee \B-smoothness.
As we show, \a-firm nonexpansiveness provides the missing link.

\begin{theorem}[characterization of convex \B-smooth functions]\label{thm:Bcoco}%
	Let \(\func{\kernel}{\R^n}{\Rinf}\) be Legendre and \(\func{f}{\X}{\Rinf}\) be proper.
	Then, the following are equivalent:
	\begin{enumerateq}
	\item \label{thm:Bcoco:Bsmooth}%
		\(f\) is convex and \B-smooth.
	\item \label{thm:Bcoco:Bcoco}%
		\(f\) is differentiable on \(\interior\X\) and satisfies the \emph{Bregman cocoercivity inequality}
		\begin{equation}\label{eq:Bcoco}
			\D_f(\bar x,x)
		\geq
			\D*\bigl(
				\nabla\kernel(\bar x)-(\nabla f(\bar x)-\nabla f(x))
				,\,
				\nabla\kernel(\bar x)
			\bigr)
		\quad
			\forall x,\bar x\in\interior\X.
		\end{equation}
	\item \label{thm:Bcoco:a*FNE}%
		\(f\) is differentiable on \(\interior\X\) and \(\nabla f\circ\nabla\kernel*\) is \a*-FNE.\footnote{%
			Strictly speaking, to ensure that the operator's range is in \(\X*\) in compliance with \cref{def:aFNE}, one may need to consider the translated operator \(\nabla f\circ\nabla\kernel*+\nabla[\kernel-f](\hat y)\) for some fixed \(\hat y\in\Y\); see \cref{rem:aFNE:X}.
		}%
	\end{enumerateq}
	In particular, under any of the above equivalent conditions it holds that
	\begin{equation}\label{eq:Bcoco:X*}
		\nabla\kernel(\bar x)-(\nabla f(\bar x)-\nabla f(x))
	\in
		\dom\kernel*
	\quad
		\forall \bar x,x\in\interior\X.
	\end{equation}
\end{theorem}
\begin{proof}
	We begin by observing that the inclusion in \eqref{eq:Bcoco:X*} follows from assertion \ref{thm:Bcoco:a*FNE}, as discussed in \cref{rem:aFNE:X}.
	We now prove the claimed equivalences.
	\begin{itemize}
	\item
		``\ref{thm:Bcoco:Bsmooth} \(\Rightarrow\) \ref{thm:Bcoco:Bcoco}''
		For \(\varepsilon>0\) let \(\kernel_\varepsilon\coloneqq\kernel+\varepsilon\j\), and note that \(f\) is \B_{\kernel_\varepsilon}-smooth.
		Since \(\kernel_\varepsilon\) is Legendre and 1-coercive, it follows from \cite[Prop. 5.5]{dragomir2021methodes} that
		\[
			\D_f(\bar x,x)
		\geq
			\D_{\kernel*_\varepsilon}\bigr(\overbracket[0.5pt]{\nabla\kernel_\varepsilon(\bar x)+[\nabla f(x)-\nabla f(\bar x)]}^{v_\varepsilon},\nabla\kernel_\varepsilon(\bar x)\bigr)
		=
			\kernel*_\varepsilon(v_\varepsilon)
			+
			\kernel_\varepsilon(\bar x)
			-
			\innprod{\bar x}{v_\varepsilon}.
		\]
		Noting that \(\kernel_\varepsilon\searrow\kernel\) as \(\varepsilon\searrow0\), it follows from \cite[Prop. 13.16(ii) and 13.28(i)]{bauschke2017convex} that \(\kernel*_\varepsilon\nearrow\kernel*\).
		By virtue of \cite[Prop. 7.4(d)]{rockafellar1998variational}, pointwise monotonic convergence implies that the sequence also converges epigraphically in the sense of \cite[Def. 7.1]{rockafellar1998variational}.
		Combined with the fact that \(v_\varepsilon\to v\coloneqq\nabla\kernel(\bar x)+[\nabla f(x)-\nabla f(\bar x)]\), epigraphical convergence entails \(\liminf_{\varepsilon\searrow0}\kernel*_\varepsilon(v_\varepsilon)\geq\kernel*(v)\) by \cite[Prop. 7.2]{rockafellar1998variational}.
		Therefore, taking the limit inferior as \(\varepsilon\searrow0\) in the above inequality yields \eqref{eq:Bcoco}.

	\item
		``\ref{thm:Bcoco:Bcoco} \(\Rightarrow\) \ref{thm:Bcoco:a*FNE}''
		Denote \(T\coloneqq\nabla f\circ\nabla\kernel*\), \(\eta\coloneqq\nabla\kernel(y)\), and \(\bar\eta\coloneqq\nabla\kernel(\bar y)\) for brevity.
		By summing the two inequalities obtained by interchanging \(\bar x=\bar y\) and \(x=y\) in \eqref{eq:Bcoco}, we obtain that
		\begin{align*}
			\DD_f(\bar y,y)
		\geq{} &
			\D*(\bar\eta+\nabla f(y)-\nabla f(\bar y),\bar\eta)
			+
			\D*(\eta+\nabla f(\bar y)-\nabla f(y),\eta)
		\\
		={} &
			\kernel*(\bar\eta+T\eta-T\bar\eta)
			-
			\kernel*(\bar\eta)
			+
			\kernel*(\eta+T\bar\eta-T\eta)
			-
			\kernel*(\eta)
			+
			\DD_f(\bar y,y).
		\end{align*}
		Canceling out the \(\DD_f(\bar y,y)\) terms and suitably rearranging yields the claimed \a*-FNE inequality \eqref{eq:aFNE}.

	\item
		``\ref{thm:Bcoco:a*FNE} \(\Rightarrow\) \ref{thm:Bcoco:Bsmooth}''
		Follows from \cref{thm:aFNE:Tmonotone}, which implies that \(\nabla f\) and \(\nabla\kernel-\nabla f\) are monotone mappings.
	\qedhere
	\end{itemize}
\end{proof}

Akin to \cref{fact:classic}, we shall see in \cref{thm:DFNE} that monotonicity of \(\psubdiff f\), convexity of \(f\), and \B-firm nonexpansiveness of \(\prox_{\lambda f}\) coincide.
It is certainly easy to see that \(\prox_{\lambda f}\) is \B-FNE provided that \(f\) is convex.
The converse direction, although expected, is more challenging than it seems.
Even in the Euclidean case, to the best of our knowledge, it was not clear whether firm nonexpansiveness of the proximal operator implies convexity, until it was recently shown by Rockafellar in \cite[Thm. 1]{rockafellar2021characterizing}.

\begin{theorem}\label{thm:DFNE}%
	Suppose that \(\kernel\) is Legendre and 1-coercive, and that \cref{ass:range} holds for a proper, lsc, and \(\kernel\)-prox-bounded function \(\func{f}{\X}{\Rinf}\)
	and \(\lambda\in(0,\pb)\).
	Then, the following are equivalent:
	\begin{enumerateq}
	\item \label{thm:DFNE::convex}%
		\(f\) is convex.
	\item \label{thm:DFNE::convex*}%
		\(f\restr_{\interior\X}\) is convex.
	\item \label{thm:DFNE::mono}%
		\(\psubdiff f\) is monotone.
	\item \label{thm:DFNE::maxmono}%
		\(\psubdiff f\) is maximally monotone relative to \(\interior\X\); that is, no monotone operator \(\ffunc{T}{\interior\X}{\R^n}\) exists such that \(\graph\psubdiff f\subsetneq\graph T\).
	\item \label{thm:DFNE::DFNE}%
		\(
			\innprod{x_1-x_2}{\nabla\kernel(y_1)-\nabla\kernel(y_2)}
		\geq
			\DD(x_1,x_2)
		\)
		for every \(y_i\in\Y\) and \(x_i\in\prox_{\lambda f}(y_i)\), \(i=1,2\);
		that is, \(\prox_{\lambda f}\) is (single-valued and) \B-FNE.
	\end{enumerateq}
	Any of the above equivalent conditions implies that \(\prox_{\lambda f}\circ\nabla\kernel*\) is maximally monotone.
\end{theorem}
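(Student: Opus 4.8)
The plan is to run the cycle \ref{thm:DFNE::convex} $\Rightarrow$ \ref{thm:DFNE::convex*} $\Rightarrow$ \ref{thm:DFNE::mono} $\Rightarrow$ \ref{thm:DFNE::convex}, to settle \ref{thm:DFNE::mono} $\Leftrightarrow$ \ref{thm:DFNE::DFNE} and \ref{thm:DFNE::mono} $\Leftrightarrow$ \ref{thm:DFNE::maxmono} on the side, and to read off single-valuedness and maximal monotonicity of the warped map at the end. The computational backbone I would set up first is that, under \cref{ass:range}, \cref{thm:warped} gives the warped resolvent identity which, composed with $\nabla\kernel*$ and using $\nabla\kernel\circ\nabla\kernel* = \id$ (from \cref{thm:D*}, applicable since 1-coercivity forces $\interior\dom\kernel* = \R^n$), becomes $\prox_{\lambda f}\circ\nabla\kernel* = (\nabla\kernel + \lambda\psubdiff f)^{-1}$ on $\R^n$. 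Hence $x_i\in\prox_{\lambda f}\circ\nabla\kernel*(\eta_i)$ is exactly $u_i\coloneqq\lambda^{-1}(\eta_i - \nabla\kernel(x_i))\in\psubdiff f(x_i)$.

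With this in hand, \ref{thm:DFNE::mono} $\Leftrightarrow$ \ref{thm:DFNE::DFNE} is pure algebra: writing $\DD(x_1,x_2) = \innprod{\nabla\kernel(x_1) - \nabla\kernel(x_2)}{x_1 - x_2}$ and substituting $\eta_i = \nabla\kernel(x_i) + \lambda u_i$, the inequality in \ref{thm:DFNE::DFNE} collapses to $\innprod{u_1 - u_2}{x_1 - x_2}\geq 0$ as $(x_i,u_i)$ range over $\graph\psubdiff f$, which is monotonicity of $\psubdiff f$. The step \ref{thm:DFNE::convex} $\Rightarrow$ \ref{thm:DFNE::convex*} is trivial, and \ref{thm:DFNE::convex*} $\Rightarrow$ \ref{thm:DFNE::mono} is easy: on $\interior\X$ the regular subdifferential of a convex $f\restr_{\interior\X}$ is the monotone convex subdifferential, and $\psubdiff f\subseteq\rsubdiff f$ by \cref{thm:FPsubdiff:f}. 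For \ref{thm:DFNE::mono} $\Rightarrow$ \ref{thm:DFNE::maxmono} I would argue Minty-style: given a monotone $T$ on $\interior\X$ with $\graph\psubdiff f\subseteq\graph T$ and any $(\bar x,\bar w)\in\graph T$, set $\bar\eta = \nabla\kernel(\bar x)+\lambda\bar w$ and pick (using $\dom\prox_{\lambda f}=\interior\X$ from \cref{thm:mainprop} together with \cref{ass:range}) a proximal point $x^\ast$ with $\lambda^{-1}(\bar\eta - \nabla\kernel(x^\ast))\in\psubdiff f(x^\ast)$; monotonicity of $T$ then yields $\innprod{\nabla\kernel(x^\ast) - \nabla\kernel(\bar x)}{x^\ast - \bar x}\leq 0$, so strict monotonicity of $\nabla\kernel$ on $\interior\X$ (essential strict convexity of $\kernel$) forces $x^\ast = \bar x$ and $\bar w\in\psubdiff f(\bar x)$; the reverse \ref{thm:DFNE::maxmono} $\Rightarrow$ \ref{thm:DFNE::mono} is immediate.

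The crux is \ref{thm:DFNE::mono} $\Rightarrow$ \ref{thm:DFNE::convex}. First I would upgrade to single-valuedness of $T\coloneqq\prox_{\lambda f}\circ\nabla\kernel*$: taking $\eta_1=\eta_2$ in the established \ref{thm:DFNE::DFNE} gives $\innprod{\nabla\kernel(x_1)-\nabla\kernel(x_2)}{x_1-x_2}\leq 0$, whence $x_1=x_2$ by strict monotonicity of $\nabla\kernel$. Single-valuedness renders the convex hull in \cref{thm:proxhull} vacuous, so $\prox_{\lambda f} = \prox_{\lambda\hull_{\lambda}{f}}$, and \cref{thm:coincidence} upgrades this to $\psubdiff f = \psubdiff(\hull_{\lambda}{f})$. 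Since $\hull_{\lambda}{f} + \lambda^{-1}\kernel = \lambda^{-1}\conv*(\lambda f + \kernel)$ is convex by \cref{thm:hull}, the minorant $\hull_{\lambda}{f}$ is \B-weakly convex, so \cref{thm:hypo} gives $\psubdiff(\hull_{\lambda}{f}) = \lsubdiff(\hull_{\lambda}{f})$ on $\interior\X$. Thus $\lsubdiff(\hull_{\lambda}{f})$ is monotone, and \cite[Thm. 12.17]{rockafellar1998variational} forces $\hull_{\lambda}{f}$ to be convex. It then remains to identify $f$ with $\hull_{\lambda}{f}$: by \cref{thm:psubdiffchar} every point of $\dom\psubdiff f = \dom\lsubdiff(\hull_{\lambda}{f})$ satisfies $f = \hull_{\lambda}{f}$, and since $f\geq\hull_{\lambda}{f}$ with $f$ lsc and $\hull_{\lambda}{f}$ convex, a segment-continuity argument (convex functions being continuous along segments terminating in their domain) propagates the equality from $\relint\dom\hull_{\lambda}{f}$ to all of $\dom f$, giving convexity of $f$.

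Finally, the closing assertion follows from the single-valuedness of $T$ obtained above together with monotonicity of $T$ (\cref{thm:monotone}) and the closed graph and local boundedness from \cref{thm:mainprop}: a single-valued, locally bounded, closed-graph map is continuous, and a continuous monotone operator defined on all of $\R^n$ is maximally monotone. The main obstacle I anticipate is precisely the implication \ref{thm:DFNE::mono} $\Rightarrow$ \ref{thm:DFNE::convex}, where the two delicate points are triggering single-valuedness in order to invoke the proximal-hull coincidence, and transferring convexity from $\hull_{\lambda}{f}$ back to $f$ across the boundary of $\dom f$, which is where lower semicontinuity of $f$ must be used with care.
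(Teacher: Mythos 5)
Your proposal tracks the paper's proof almost verbatim on four of the five fronts: the correspondence \(x\in\prox_{\lambda f}\circ\nabla\kernel*(\eta)\Leftrightarrow\lambda^{-1}(\eta-\nabla\kernel(x))\in\psubdiff f(x)\) under \cref{ass:range} (via \cref{thm:prox-J}), the purely algebraic equivalence \ref{thm:DFNE::mono} \(\Leftrightarrow\) \ref{thm:DFNE::DFNE}, the Minty-style argument for \ref{thm:DFNE::mono} \(\Rightarrow\) \ref{thm:DFNE::maxmono} (nonemptiness of the prox plus strict monotonicity of \(\nabla\kernel\)), and single-valuedness by taking \(\eta_1=\eta_2\) are exactly the paper's steps. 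The divergence is in the crux \ref{thm:DFNE::mono} \(\Rightarrow\) \ref{thm:DFNE::convex}, and there your argument has a genuine gap.

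You reduce matters to showing that \(\hull_{\lambda}{f}\) is convex, and justify this by applying \cite[Thm.\ 12.17]{rockafellar1998variational} to it. That theorem requires a proper \emph{lsc} function on \(\R^n\) whose limiting subdifferential is monotone \emph{everywhere}; what you have established (via \cref{thm:coincidence,thm:hypo}) is the identity \(\lsubdiff\hull_{\lambda}{f}=\psubdiff\hull_{\lambda}{f}=\psubdiff f\) \emph{on \(\interior\X\) only}. The domain of \(\hull_{\lambda}{f}\) can reach \(\boundary\X\), and nothing in your argument controls \(\lsubdiff\hull_{\lambda}{f}\) at such boundary points (your claimed identity \(\dom\psubdiff f=\dom\lsubdiff\hull_{\lambda}{f}\) has the same defect: it should be intersected with \(\interior\X\)). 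Interior subdifferential information is genuinely blind to boundary behavior---this is precisely the phenomenon in the paper's remark following \cref{thm:hypo} and in \cref{ex:psubdiff_notMM}, where \(\psubdiff f\) is (vacuously or trivially) monotone while nothing is convex. Moreover, lower semicontinuity of (the canonical extension of) \(\hull_{\lambda}{f}\) is unverified: by \cref{thm:hull} it equals \(\lambda^{-1}\bigl[\conv*(\lambda\tilde f+\kernel)-\kernel\bigr]\) on \(\X\), and subtracting \(\kernel\), which need not be continuous relative to \(\X\) at boundary points, can destroy lsc; the same unverified hypotheses (proper, lsc, \(\kernel\)-prox-bounded with threshold exceeding \(\lambda\)) are needed even to apply \cref{thm:coincidence,thm:hypo} to \(\hull_{\lambda}{f}\) in the first place. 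The intermediate claim itself is true a posteriori, but your derivation of it does not go through.

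The paper's proof is engineered to avoid exactly this boundary obstruction: it never derives convexity of a function whose domain may touch \(\boundary\X\) from a monotonicity theorem. Instead it forms \(\mathring f\coloneqq f+\indicator_{\interior\X}\), whose convexity is a statement about the open set \(\interior\X\) alone (there the monotonicity data is available), passes to \(\bar f\coloneqq\conv*\mathring f\), which is convex and lsc \emph{by construction}, proves \(\prox_{\lambda f}=\prox_{\lambda\bar f}\) using \cref{thm:argmincl} together with single-valuedness, and only then identifies \(f=\bar f\) through \cref{thm:coincidence} and \cref{thm:hull}. Unless you can prove monotonicity of \(\lsubdiff\hull_{\lambda}{f}\) up to \(\boundary\X\) and lsc of \(\hull_{\lambda}{f}\), your argument does not close; handling the boundary is the actual content of this implication. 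Two smaller repairs you would also need: concluding convexity of \(f\) from \(f=\hull_{\lambda}{f}\) on \(\dom f\) requires convexity of \(\dom f\) (available from \cref{thm:hypo} via convex-valuedness, but not stated), and local boundedness of \(\prox_{\lambda f}\circ\nabla\kernel*\) is not part of \cref{thm:mainprop} as quoted---the paper instead obtains maximal monotonicity of the single-valued prox directly from \cref{thm:hypo}.
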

\begin{proof}
	In the following discussion, observe that because of \cref{ass:range} (together with 1-coercivity and \(\kernel\)-prox-boundedness that ensure \(\range\prox_{\lambda f}\neq\emptyset\)) it necessarily holds that \(\dom f\cap\interior\X\neq\emptyset\); see \cref{thm:mainprop}.
	\begin{itemize}
	\item
		``\ref{thm:DFNE::convex} \(\Rightarrow\) \ref{thm:DFNE::convex*}'' and ``\ref{thm:DFNE::maxmono} \(\Rightarrow\) \ref{thm:DFNE::mono}''
		Trivial.

	\item
		``\ref{thm:DFNE::convex*} \(\Rightarrow\) \ref{thm:DFNE::mono}''
		For any \(x\in\interior\X\) it follows from \cref{thm:FPsubdiff:f} that \(\psubdiff f(x)=\lsubdiff f(x)=\lsubdiff\bigl[f\restr_{\interior\X}\bigr](x)\), where the second equality follows from the fact that \(\interior\X\) is open.
		Monotonicity of \(\psubdiff f\) then follows from that of \(\lsubdiff\bigl[f\restr_{\interior\X}\bigr]\).

	\item
		``\ref{thm:DFNE::mono} \(\Rightarrow\) \ref{thm:DFNE::maxmono}''
		Take \((\bar x,\bar u)\in\interior\X\times\R^n\) which is monotonically related to \(\graph\psubdiff f\), that is, such that for every \((x,u)\in\graph\psubdiff f\)
		\begin{equation}\label{eq:mono related}
			\innprod{x-\bar x}{u-\bar u}\geq0.
		\end{equation}
		We claim that \(\bar u\in\psubdiff f(\bar x)\), which proves the desired maximality.
		To see this, take
		\(
			\hat x
		\in
			\prox_{\lambda f}\circ\nabla\kernel*(\lambda\bar u+\nabla\kernel(\bar x))
		=
			\prox_{\lambda f}\circ\nabla\kernel*(\lambda\bar u+\nabla\kernel(\bar x))\cap\interior\X
		\),
		where the equality holds due to \cref{ass:range}, to which \cref{thm:prox-J} yields
		\(
			\bar u+\lambda^{-1}(\nabla\kernel(\bar x)-\nabla\kernel(\hat x))
		\in
			\psubdiff f(\hat x)
		\) and \eqref{eq:mono related} entails that
		\(
		\innprod{\hat x-\bar x}{\nabla\kernel(\bar x)-\nabla\kernel(\hat x)}\geq0
		\),
		enforcing \(\bar x=\hat x\).
		Invoking \cref{thm:psubdiffchar} to the inclusion
		\(
		\bar x\in\prox_{\lambda f}\circ\nabla\kernel*(\lambda\bar u+\nabla\kernel(\bar x))
		\)
		yields \(\bar u\in\psubdiff f(\bar x)\), justifying the claim.

	\item
		``\ref{thm:DFNE::mono} \(\Rightarrow\) \ref{thm:DFNE::DFNE}''
		Fix \(y_i\in\Y\) and \(x_i\in\prox_{\lambda f}(y_i)\) for \(i=1,2\).
		Since \(x_i\in\interior\X\) by \cref{ass:range}, it follows from \cref{thm:prox-J} that
		\(
			\lambda^{-1}\bigl(\nabla\kernel(y_i)-\nabla\kernel(x_i)\bigr)
		\in
			\psubdiff f(x_i)
		\),
		to which the monotonicity of \(\psubdiff f\) implies that
		\[
			\innprod{x_1-x_2}{
				\bigl(\nabla\kernel(y_1)-\nabla\kernel(x_1)\bigr)
				-
				\bigl(\nabla\kernel(y_2)-\nabla\kernel(x_2)\bigr)
			}
		\geq
			0.
		\]
		After rearranging, the claimed inequality is obtained.
		It then follows from \cite[Prop. 3.5(iii)]{bauschke2003bregman} that \(\prox_{\lambda f}\) is indeed single-valued as required in \cref{def:BFNE}, and is thus \B-FNE as claimed; maximal monotonicity is then ensured by \cref{thm:hypo}.

	\item
		``\ref{thm:DFNE::DFNE} \(\Rightarrow\) \ref{thm:DFNE::convex*}''
		Since \(\prox_{\lambda f}\) is single-valued, \(\lambda f+\kernel\) is convex by virtue of \cref{thm:hypo}.
		Therefore,
		\begin{equation}\label{eq:psubdiffmono}
			\psubdiff f
		=
			\fsubdiff(f+\lambda^{-1}\kernel)\restr_{\interior\X}-\lambda^{-1}\nabla\kernel
		=
			\lsubdiff f\restr_{\interior\X},
		\end{equation}
		where the first identity owes to \cref{thm:FPsubdiff:f}.
		As an intermediate step, we first show that assertion \ref{thm:DFNE::mono} holds true.
		Take \(u_i\in\psubdiff f(x_i)\) for \(i=1,2\).
		Then, \cref{thm:prox-J} entails
		\[
			x_i
		\in
			\prox_{\lambda f}\circ\nabla\kernel*(\lambda u_i+\nabla\kernel(x_i)),
		\]
		from which \B-firm nonexpansiveness yields
		\[
			\innprod{x_1-x_2}{\lambda u_1+\nabla\kernel(x_1)-\lambda u_2-\nabla\kernel(x_2)}
		\geq
			\innprod{x_1-x_2}{\nabla\kernel(x_1)-\nabla\kernel(x_2)}.
		\]
		It follows immediately that \(\psubdiff f\) is monotone, which is assertion \ref{thm:DFNE::mono}.
		Combined with \eqref{eq:psubdiffmono} we deduce that \(\mathring{f}\coloneqq f+\indicator_{\interior\X}\) is proper (because of \cref{ass:range}) and convex (though possibly not lsc).

	\item
		``\ref{thm:DFNE::DFNE} \(\Rightarrow\) \ref{thm:DFNE::convex}''
		As shown in the item above, \(\bar{f}\coloneqq\conv*\mathring{f}\) is proper and convex, and \(\prox_{\lambda f}\) is single-valued.
		For any \(y\in\Y\) we have
		\begin{align*}
			\prox_{\lambda f}(y)
		={} &
			\argmin_{x\in\interior\X}\set{\lambda f(x)+\D(x,y)}
		\\
		={} &
			\argmin_{x\in\X}\set{\lambda\mathring f(x)+\D(x,y)}
		\\
		\subseteq{} &
			\argmin_{x\in\X}\set{\lambda\bar f(x)+\D(x,y)}
			=
			\prox_{\lambda\bar f}(y),
		\end{align*}
		where the first identity holds because of \cref{ass:range}, the second one because \(\mathring{f}\) equals \(f\) on \(\interior\X\) and \(\infty\) elsewhere, and the third one because of \cref{thm:argmincl}.
		On the other hand, both \(\prox_{\lambda f}(y)\) and \(\prox_{\lambda\bar f}(y)\) are singletons (the latter one because \(\bar f\) is convex).
		Therefore, the inclusion holds as equality.
		We then infer that \(\bar f=\hull_\lambda{\bar f}=\hull_\lambda{f}+c=f+c\) for some \(c\in\R\), where the first and last identity follow from the fact that \(\bar f+\lambda^{-1}\kernel\) and \(f+\lambda^{-1}\kernel\) are (proper, lsc and) convex, cf. \cref{thm:Euclidean}, and the second one from \cref{thm:coincidence}.
		Therefore, \(f\) coincides with the convex function \(\bar f-c\) (in fact, with \(c=0\)).
	\qedhere
	\end{itemize}
\end{proof}

\cref{ass:range} is essential for the full equivalence in \cref{thm:DFNE}.
Even more, the necessity of \cref{ass:range} showcases a striking departure from the Euclidean case studied in \cite[Thm. 4.7]{luo2024level}.
When this requirement is not met, it can still be shown by the same arguments that \(\interior\X\cap\prox_{\lambda f}\) being \B-FNE is tantamount to \(\psubdiff f\) being monotone, but not necessarily \emph{maximally} so, as the following example demonstrates.

\begin{example}[nonmaximal monotonicity of \(\psubdiff f\)]\label{ex:psubdiff_notMM}%
	Consider
	\begin{align*}
		\kernel(x)
	={} &
		\begin{cases}
			-\sqrt{1-x^2} & \text{if }|x|\leq1
			\\
			\infty & \text{otherwise,}
		\end{cases}
	\shortintertext{and \(\func{f}{\X=[-1,1]}{\Rinf}\) defined as}
		f(x)
	={} &
		\begin{cases}
			\sqrt{\tfrac{1}{4}-(x-\tfrac{1}{2})^2} & \text{if } |x-\tfrac{1}{2}|\leq\tfrac{1}{2}
			\\
			\infty & \text{otherwise.}
		\end{cases}
	\end{align*}
	\begin{minipage}[b]{0.55\linewidth}%
		For \(\lambda=2\), \cref{thm:FPsubdiff:f} yields that
		\begin{align*}
			\psubdiff f(x)
		={} &
			\mathrlap{
				\bigl[\fsubdiff(f+\lambda^{-1}\kernel)\bigr]\restr_{(-1,1)}(x)
				-
				\lambda^{-1}\nabla\kernel(x)
			}
		\\
		={} &
			\begin{cases}
				\left(-\infty,\tfrac{1}{2}\right] & \text{if } x=0\\
				\emptyset & \text{otherwise.}
			\end{cases}
		\end{align*}
		Apparently, \(\psubdiff f\) is monotone but not maximally so.
		This is because
		\[
			\range\prox_{2f}=\set{0,1}\not\subseteq(-1,1)=\interior\X,
		\]
		and therefore \cref{ass:range} is violated.
	\end{minipage}
	\hfill
	\begin{minipage}[b]{0.44\linewidth}%
		\vspace*{0pt}%
		\includegraphics[width=\linewidth]{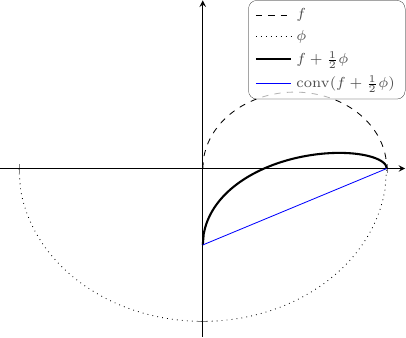}%
		\vspace*{0pt}%
	\end{minipage}
\end{example}

In variational analysis, differentiability of a function is closely tied with a two-sided condition on a subdifferential.
For example, a function \(f\) is differentiable at \(\bar x\) iff \(\rsubdiff(\pm f)(\bar x)\neq\emptyset\).
We end this section investigating \B-smoothness, which plays a central role in numerical algorithms without the classical Lipschitz smoothness; see for instance \cite{bauschke2017descent,lu2018relatively,ahookhosh2021bregman,bolte2018first,maddison2021dual} and the references therein.
It turns out that \B-smoothness can be characterized by a two-sided condition on the Bregman level proximal subdifferential.

\begin{lemma}\label{thm:pwBsmooth}%
	For any \(\func{f}{\X}{\R}\), \(\bar x\in\interior\X\), and \(L>0\) the following are equivalent:
	\begin{enumerateq}
	\item \label{thm:pwBsmooth:nonempty}%
		\(\psubdiff[]^{L\kernel}(\pm f)(\bar x)\neq\emptyset\).
	\item \label{thm:pwBsmooth:singleton}%
		\(f\) is differentiable at \(\bar x\) and \(\psubdiff[]^{L\kernel}[\pm f](\bar x)=\set{\pm\nabla f(\bar x)}\).
	\item \label{thm:pwBsmooth:D}%
		\(f\) is differentiable at \(\bar x\) and \(\D_{L\kernel\pm f}(x,\bar x)\geq0\) \(\forall x\in\X\).
	\end{enumerateq}
\end{lemma}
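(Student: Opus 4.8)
The plan is to reduce all three conditions to the convex-subdifferential reformulation of the level proximal subdifferential. Since $L\kernel$ is again a distance-generating function (being proper, lsc, convex, and differentiable on $\interior\X$), \cref{thm:FPsubdiff:f} applies verbatim with $L\kernel$ in place of $\kernel$ and $\lambda=1$, yielding
\[
	\psubdiff[]^{L\kernel}(\pm f)
=
	\bigl[\fsubdiff(\pm f+L\kernel)\bigr]\restr_{\interior\X}-L\nabla\kernel
\subseteq
	\rsubdiff(\pm f).
\]
Concretely, at the fixed $\bar x\in\interior\X$ one has $\bar u\in\psubdiff[]^{L\kernel}(\pm f)(\bar x)$ if and only if $\bar u+L\nabla\kernel(\bar x)\in\fsubdiff(\pm f+L\kernel)(\bar x)$, and every such $\bar u$ is in particular a regular subgradient of $\pm f$ at $\bar x$. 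This single reformulation drives the whole proof, with the two signs treated in parallel throughout.

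For ``\ref{thm:pwBsmooth:nonempty} $\Rightarrow$ \ref{thm:pwBsmooth:singleton}'' I would first note that, by the inclusion above, \ref{thm:pwBsmooth:nonempty} forces both $\rsubdiff f(\bar x)\neq\emptyset$ and $\rsubdiff(-f)(\bar x)\neq\emptyset$. The crucial step is then the standard variational-analytic fact that simultaneous nonemptiness of $\rsubdiff f(\bar x)$ and $\rsubdiff(-f)(\bar x)$ already forces $f$ to be differentiable at $\bar x$, with $\rsubdiff f(\bar x)=\set{\nabla f(\bar x)}$ and $\rsubdiff(-f)(\bar x)=\set{-\nabla f(\bar x)}$: picking $v_1\in\rsubdiff f(\bar x)$ and $v_2\in\rsubdiff(-f)(\bar x)$, adding the two defining $\liminf$-inequalities and probing along the direction $v_1+v_2$ gives $v_2=-v_1$, after which the two one-sided estimates collapse into a genuine Fréchet derivative. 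Since the sets $\psubdiff[]^{L\kernel}(\pm f)(\bar x)$ are nonempty and contained in the singletons $\rsubdiff(\pm f)(\bar x)=\set{\pm\nabla f(\bar x)}$, they must coincide with them, which is exactly \ref{thm:pwBsmooth:singleton}. The converse ``\ref{thm:pwBsmooth:singleton} $\Rightarrow$ \ref{thm:pwBsmooth:nonempty}'' is immediate, a singleton being nonempty.

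For ``\ref{thm:pwBsmooth:singleton} $\Leftrightarrow$ \ref{thm:pwBsmooth:D}'' I would take $f$ differentiable at $\bar x$ — a hypothesis common to both — and translate through the reformulation. As $\pm f+L\kernel$ is then differentiable at $\bar x\in\interior\X$ with gradient $\pm\nabla f(\bar x)+L\nabla\kernel(\bar x)$, the requirement $\D_{L\kernel\pm f}(x,\bar x)\geq0$ for all $x\in\X$ is, by the very definition of the Fenchel subdifferential, the same as $\pm\nabla f(\bar x)+L\nabla\kernel(\bar x)\in\fsubdiff(\pm f+L\kernel)(\bar x)$, i.e.\ $\pm\nabla f(\bar x)\in\psubdiff[]^{L\kernel}(\pm f)(\bar x)$. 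Because $f$ is differentiable at $\bar x$, the inclusion $\psubdiff[]^{L\kernel}(\pm f)(\bar x)\subseteq\rsubdiff(\pm f)(\bar x)=\set{\pm\nabla f(\bar x)}$ upgrades this membership to the equality $\psubdiff[]^{L\kernel}(\pm f)(\bar x)=\set{\pm\nabla f(\bar x)}$, which is \ref{thm:pwBsmooth:singleton}. Keeping the $+$ and $-$ copies together closes the equivalence.

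The main obstacle is the differentiability criterion used in the first implication: one must be careful that the regular-subdifferential inclusion from \cref{thm:FPsubdiff:f} is precisely what upgrades mere nonemptiness of $\psubdiff[]^{L\kernel}(\pm f)(\bar x)$ to two-sided regular subgradients, and that it is the cancellation of the $o(\norm{x-\bar x})$ terms which produces an actual gradient rather than a one-sided estimate. The remaining work is bookkeeping on the $\pm$ signs and direct rewriting via \cref{thm:FPsubdiff:f}.
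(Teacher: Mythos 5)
Your proposal is correct and follows essentially the same route as the paper: both reduce everything to the reformulation \(\psubdiff[]^{L\kernel}(\pm f)=\bigl[\fsubdiff(\pm f+L\kernel)\bigr]\restr_{\interior\X}-L\nabla\kernel\subseteq\rsubdiff(\pm f)\) from \cref{thm:FPsubdiff:f}, invoke the standard fact that simultaneous nonemptiness of \(\rsubdiff f(\bar x)\) and \(\rsubdiff(-f)(\bar x)\) forces differentiability (which the paper simply cites, while you also sketch its proof), and identify condition \ref{thm:pwBsmooth:D} with the defining inequality \eqref{eq:psubdiff}. The only cosmetic difference is bookkeeping: you prove \ref{thm:pwBsmooth:nonempty} \(\Leftrightarrow\) \ref{thm:pwBsmooth:singleton} and \ref{thm:pwBsmooth:singleton} \(\Leftrightarrow\) \ref{thm:pwBsmooth:D} separately, whereas the paper closes the cycle \ref{thm:pwBsmooth:nonempty} \(\Rightarrow\) \ref{thm:pwBsmooth:singleton} \(\Rightarrow\) \ref{thm:pwBsmooth:D} \(\Rightarrow\) \ref{thm:pwBsmooth:nonempty}.
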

\begin{proof}~
	\begin{itemize}
	\item ``\ref{thm:pwBsmooth:nonempty} \(\Rightarrow\) \ref{thm:pwBsmooth:singleton}''
		If follows from \cref{thm:FPsubdiff:f} that \(\rsubdiff (\pm f)(\bar x)\neq\emptyset\), implying differentiability of \(f\) at \(\bar x\) with \(\rsubdiff[\pm f](\bar x)=\set{\pm\nabla f(\bar x)}\).
		Then, from the inclusions
		\[
			\emptyset
		\neq
			\psubdiff[]^{L\kernel}[\pm f](\bar x)
		\subseteq
			\rsubdiff[\pm f](\bar x)
		=
			\set{\pm\nabla f(\bar x)}
		\]
		it necessarily follows that
		\(
			\psubdiff[]^{L\kernel}[\pm f](\bar x)
		=
			\set{\pm\nabla f(\bar x)}
		\).

	\item ``\ref{thm:pwBsmooth:singleton} \(\Rightarrow\) \ref{thm:pwBsmooth:D}''
		Follows by the definition of \(\psubdiff[]^{L\kernel}[\pm f](\bar x)\), cf. \eqref{eq:psubdiff}.

	\item ``\ref{thm:pwBsmooth:D} \(\Rightarrow\) \ref{thm:pwBsmooth:nonempty}''
		Again by the definition of \(\psubdiff[]^{L\kernel}[\pm f](\bar x)\), it follows that \(\pm\nabla f(\bar x)\in\psubdiff[]^{L\kernel}[\pm f](\bar x)\), hence that \(\psubdiff[]^{L\kernel}[\pm f](\bar x)\) are nonempty.
	\qedhere
	\end{itemize}
\end{proof}

\begin{theorem}\label{thm:Bsmooth}%
	Let \(\func{\kernel}{\R^n}{\Rinf}\) be 1-coercive and Legendre, and let \(\func{f}{\X}{\R}\) and \(L>0\) be fixed.
	If \(\psubdiff[]^{L\kernel}[\pm f]\neq\emptyset\) on \(\interior\X\), then \(L^{-1}f\) is \B-smooth.

	The converse implication also holds provided that
	\begin{equation}\label{eq:subdiff*:CQfull}
		f(x)
	=
		\lim_{\interior\X\ni x'\to x}f(x')
	\quad
		\forall x\in\X\setminus\interior\X.
	\end{equation}
\end{theorem}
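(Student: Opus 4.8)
The plan is to obtain both implications from the pointwise characterization already recorded in \cref{thm:pwBsmooth}, which states that $\psubdiff[]^{L\kernel}(\pm f)(\bar x)\neq\emptyset$ is equivalent (\cref{thm:pwBsmooth:nonempty}\,$\Leftrightarrow$\,\cref{thm:pwBsmooth:D}) to $f$ being differentiable at $\bar x$ together with the one-sided Bregman inequalities $\D_{L\kernel\pm f}(x,\bar x)\geq0$ for all $x\in\X$. The entire argument then reduces to converting these pointwise inequalities, indexed by $\bar x\in\interior\X$, into global convexity of $L\kernel+f$ and $L\kernel-f$ on $\interior\X$ (which is exactly \B-smoothness of $L^{-1}f$), and back.

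For the forward implication I would fix $\bar x\in\interior\X$ and invoke \cref{thm:pwBsmooth} to get that $f$ is differentiable at $\bar x$ and $\D_{L\kernel\pm f}(x,\bar x)\geq0$ for every $x\in\X$. Retaining only $x\in\interior\X$ and letting $\bar x$ range over $\interior\X$, this says precisely that the gradient inequality holds for every pair of points in the open convex set $\interior\X$, on which $L\kernel\pm f$ is differentiable ($f$ by the lemma and $\kernel$ by the dgf hypothesis). The first-order characterization of convexity then yields that $L\kernel+f$ and $L\kernel-f$ are convex on $\interior\X$, i.e.\ that $L^{-1}f$ is \B-smooth.

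For the converse I would start from \B-smoothness, that is, convexity of $L\kernel+f$ and $L\kernel-f$ on $\interior\X$. Differentiability of $f$ on $\interior\X$ follows because $\rsubdiff(\pm f)(\bar x)=\rsubdiff(L\kernel\pm f)(\bar x)-L\nabla\kernel(\bar x)\neq\emptyset$ (a finite convex function has nonempty regular subdifferentials on the interior of its domain, and $\kernel$ is differentiable there), so the characterization of differentiability through $\rsubdiff(\pm f)(\bar x)\neq\emptyset$ applies. Convexity moreover gives the gradient inequality $\D_{L\kernel\pm f}(x,\bar x)\geq0$ immediately for $x\in\interior\X$; to feed \cref{thm:pwBsmooth:D} into \cref{thm:pwBsmooth} and conclude $\psubdiff[]^{L\kernel}(\pm f)(\bar x)\neq\emptyset$, the inequality must be extended to boundary points $x\in\X\setminus\interior\X$. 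This extension is the crux, and the only place where \eqref{eq:subdiff*:CQfull} is used.

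To carry out the extension, I would fix $\bar x\in\interior\X$, a boundary point $x\in\X\setminus\interior\X$, and consider the segment $x_t\coloneqq(1-t)\bar x+tx$, which lies in $\interior\X$ for $t\in[0,1)$ since $\bar x\in\interior\X$ and $x\in\X$. As $\kernel$ is convex and lsc, the map $t\mapsto\kernel(x_t)$ is finite, convex, and lsc on $[0,1]$, hence left-continuous at $t=1$, so $\kernel(x_t)\to\kernel(x)$; meanwhile \eqref{eq:subdiff*:CQfull} gives $f(x_t)\to f(x)$ along this same approach in $\interior\X$. Writing $\D_{L\kernel\pm f}(x_t,\bar x)\geq0$ for $t\in[0,1)$ and letting $t\to1^-$ then yields $\D_{L\kernel\pm f}(x,\bar x)\geq0$, so the inequality holds on all of $\X$, and \cref{thm:pwBsmooth} delivers the claim. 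The main obstacle is exactly this boundary passage: absent a regularity hypothesis such as \eqref{eq:subdiff*:CQfull}, the value $f(x)$ at a boundary point is unconstrained by the interior behaviour, and the global subgradient inequality defining $\psubdiff[]^{L\kernel}$ may fail even though convexity on $\interior\X$ persists.
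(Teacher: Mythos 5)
Your proof is correct, and the forward implication coincides with the paper's: both arguments feed \cref{thm:pwBsmooth} into the first-order characterization of convexity of the differentiable functions \(L\kernel\pm f\) on the open convex set \(\interior\X\). The converse is where you take a genuinely different route. The paper globalizes convexity in one stroke: \B-smoothness makes \(L\kernel\pm f+\indicator_{\interior\X}\) convex, the boundary condition \eqref{eq:subdiff*:CQfull} identifies \(L\kernel\pm f\) with the lsc hull \(\closure(L\kernel\pm f+\indicator_{\interior\X})\) on all of \(\X\), and then nonemptiness of \(\fsubdiff(L\kernel\pm f)(x)\) at each \(x\in\interior\X=\relint\dom(L\kernel\pm f)\) yields the global subgradient inequality directly, which is rearranged into membership in \(\psubdiff[]^{L\kernel}[\pm f](x)\) without ever mentioning differentiability of \(f\). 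You instead stay pointwise: you first derive differentiability of \(f\) on \(\interior\X\) from the two-sided condition \(\rsubdiff(\pm f)(\bar x)\neq\emptyset\) (a fact the paper itself endorses just before \cref{thm:pwBsmooth}), then extend the inequality \(\D_{L\kernel\pm f}(x,\bar x)\geq0\) from \(\interior\X\) to boundary points by the segment argument --- left-continuity of \(t\mapsto\kernel(x_t)\) at \(t=1\) from convexity combined with lower semicontinuity, and \(f(x_t)\to f(x)\) from \eqref{eq:subdiff*:CQfull} --- and finally close with the implication \cref{thm:pwBsmooth:D} \(\Rightarrow\) \cref{thm:pwBsmooth:nonempty}. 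Your segment limit is exactly the content left implicit in the paper's unproved identity \(L\kernel\pm f=\closure(L\kernel\pm f+\indicator_{\interior\X})\), so your version is more explicit and self-contained (and reuses the lemma symmetrically for both directions), while the paper's is more compact because it delegates the boundary passage to standard facts about closures of convex functions and bypasses the differentiability step. Both proofs correctly isolate \eqref{eq:subdiff*:CQfull} as the hypothesis that prevents boundary values of \(f\) from floating free of its interior behaviour.
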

\begin{proof}
	Suppose first that \(\psubdiff[]^{L\kernel}[\pm f]\neq\emptyset\) on \(\interior\X\).
	Then \cref{thm:pwBsmooth} implies that \((\forall x,x'\in\interior\X)\) \(\D_{L\kernel\pm f}(x',x)\geq0\) and equivalently \(L\kernel\pm f\) is convex on \(\interior\X\).
	Conversely, the \B-smoothness conditions enforce that \(L\kernel\pm f+\indicator_{\interior\X}\) are convex, which combined with the additional assumption \eqref{eq:subdiff*:CQfull} implies that \(L\kernel\pm f=\closure(L\kernel\pm f+\indicator_{\interior\X})\) are convex.
	In turn, to each \(x\in \interior\X=\relint\dom(L\kernel\pm f)\) there correspond some \(u^{\pm}\in\fsubdiff(L\kernel\pm f)(x)\) such that
	\[
		(\forall x'\in\X)~
		(L\kernel\pm f)(x')\geq (L\kernel\pm f)(x)+\innprod{u^{\pm}}{x'-x},
	\]
	which amounts to \(u^{\pm}-L\nabla\kernel(x)\in\psubdiff[]^{L\kernel}[\pm f](x)\).
\end{proof}

\begin{remark}
	The condition \eqref{eq:subdiff*:CQfull} is necessary.
	Consider
	\begin{align*}
		\kernel(x)
	=
		\begin{cases}
			-\sqrt{1-x^2}
		&
			\text{if }|x|\leq1
		\\
			\infty
		&
			\text{otherwise},
		\end{cases}
	\end{align*}
	and \(\func{f}{\X\mathrel{{=}}[-1,1]}{\Rinf}\) defined by \(f(x)=\sqrt{1-x^2}\) for \(|x|<1\) and \(f(x)=-1\) for \(|x|=1\).
	Then \(f\) is \B-smooth relative to \(\kernel\) because both \(\kernel\pm f\) are convex on  \(\interior\X=(-1,1)\).
	However, \(\conv(f+\kernel)(x)=-1<0=(f+\kernel)(x)\) for all \(|x|<1\), which means that \(\psubdiff f(x)=\emptyset\) for every \(x\in(-1,1)\) in view of \cref{thm:psubdiffchar}.
	This is because
	\[
	\lim_{x\to \pm 1}f(x)=0\neq f(\pm 1),
	\]
	violating the condition \eqref{eq:subdiff*:CQfull}.
\end{remark}

In the classic setting, a function \(f\) is convex iff so is its Moreau envelope; see \cite[Thm. 3.17]{wang2010chebyshev}.
For Bregman Moreau envelopes, although it has been observed that \(\env{f}\circ\nabla\kernel*\) may not be convex even when \(f\) is so, it remains unclear when and why this should happen.

\Cref{thm:envcvx} below is dedicated to characterizing the convexity of \(\env{f}\circ\nabla\kernel*\), and extends the correspondence between \cref{fact:classic:envcvx,fact:classic:envcoco}.
Unlike the Euclidean setting, in the Bregman framework convexity of \(f\) and of its envelope are distinct properties, each corresponding to a different flavor of firm nonexpansiveness of the proximal operator; cf. \cref{fig:diagram}.

\begin{theorem}\label{thm:envcvx}%
	Suppose that \(\kernel\) is Legendre, and let a proper, lsc, and \(\kernel\)-prox-bounded function \(\func{f}{\X}{\Rinf}\) be fixed.
	Then, denoting \(\func{h_\lambda\coloneqq\env{f}\circ\nabla\kernel*}{\Y*}{\Rinf}\), for any \(\lambda\in(0,\pb)\) the following are equivalent:
	\begin{enumerateq}
	\item \label{thm:envcvx::envcvx}%
		\(h_\lambda\) is proper and convex.
	\item \label{thm:envcvx::envB*smooth}%
		\(\lambda h_\lambda\) is convex and \B*-smooth.\footnote{%
			Strictly speaking, \B*-smoothness presupposes a function defined on \(\X*\), whereas \(\lambda h_\lambda\) is only defined on its interior.
			We shall nonetheless appeal to \cref{rem:Bcvx:lsc} and treat this as a slight notational abuse with no loss of generality.
		}%
	\item \label{thm:envcvx::fB*smooth}%
		\((\lambda\tilde f+\kernel)^*\restr_{\X*}\) is \B*-smooth, where \(\func{\tilde f}{\R^n}{\Rinf}\) is the canonical extension of \(f\) as in \cref{def:fext}.
	\item \label{thm:envcvx::envcvxC1}%
		\(h_\lambda\) is convex and continuously differentiable.
	\end{enumerateq}
	Under any of the above conditions, one has that
	\(
		\nabla h_\lambda
	=
		\lambda^{-1}\bigl(\nabla\kernel*-\prox_{\lambda f}\circ\nabla\kernel*\bigr)
	\),
	and in particular \(\prox_{\lambda f}\) is single-valued.
	Moreover, the following conditions also hold:
	\begin{enumerateq}[resume]
	\item \label{thm:envcvx::proxaFNE}%
		\(\prox_{\lambda f}\) is \a-FNE.
	\item \label{thm:envcvx::Lip}%
		\(\dom\prox_{\lambda f}=\Y\), and
		\(
			\innprod{x_1-x_2}{\nabla\kernel(y_1)-\nabla\kernel(y_2)}
		\leq
			\DD(y_1,y_2)
		\)
		for all \(y_i\in\Y\) and \(x_i\in\prox_{\lambda f}(y_i)\).
	\end{enumerateq}
	When \(\kernel\) is 1-coercive, all the listed statements are equivalent and entail \B-weak convexity of \(f\).
\end{theorem}
\begin{proof}
	We begin by recalling the identity
	\(
		\lambda h_\lambda
	=
		\kernel*\restr_{\Y*}
		-
		(\lambda\tilde f+\kernel)^*\restr_{\Y*}
	\)
	of \cref{thm:envconj}.
	When \(h_\lambda\) is proper, it is finite valued and thus the identity can be rearranged as
	\begin{equation}\label{eq:envconj}
		\kernel*\restr_{\Y*}
	=
		\lambda h_\lambda
		+
		(\lambda\tilde f+\kernel)^*\restr_{\Y*}
	\end{equation}
	as no ``\(\infty-\infty\)'' indeterminate occurrences arise.
	Moreover, when \(h_\lambda\) is differentiable the claimed formula of the gradient directly follows from \cref{thm:lsubdiffenv}.
	\begin{itemize}
	\item
		``\ref{thm:envcvx::envcvx} \(\Rightarrow\) \ref{thm:envcvx::envB*smooth}'' and ``\ref{thm:envcvx::envB*smooth} \(\Leftrightarrow\) \ref{thm:envcvx::fB*smooth}''
		Follow from \eqref{eq:envconj}.

	\item
		``\ref{thm:envcvx::envB*smooth} \(\Rightarrow\) \ref{thm:envcvx::envcvx}''
		and
		``\ref{thm:envcvx::envcvxC1} \(\Rightarrow\) \ref{thm:envcvx::envcvx}''
		Obvious.

	\item
		``\ref{thm:envcvx::envcvx} \(\Rightarrow\) \ref{thm:envcvx::envcvxC1}''
		Since \(\kernel*\) is (finite and) differentiable on the open set \(\Y*\), the calculus rule of \cite[Thm. 23.8]{rockafellar1970convex} implies that both convex functions on the right-hand side too must be differentiable.

	\item ``\ref{thm:envcvx::envB*smooth} \(\Rightarrow\) \ref{thm:envcvx::proxaFNE}''
		It follows from \cref{thm:Bcoco} that \(\nabla(\lambda h_\lambda)\circ\nabla\kernel=\id-\prox_{\lambda f}\) is \a-FNE, hence so is \(\prox_{\lambda f}\) by virtue of \cref{thm:aFNE:I-T}.

	\item ``\ref{thm:envcvx::envcvxC1} \(\Rightarrow\) \ref{thm:envcvx::Lip}''
		Convexity and differentiability of \(h_\lambda\) imply that
		\(
			\nabla h_\lambda
		=
			\lambda^{-1}\bigl(\nabla\kernel*-\prox_{\lambda f}\circ\nabla\kernel*\bigr)
		\)
		is monotone, and that \(\prox_{\lambda f}\) is a single-valued mapping (with full domain).
		Assertion \ref{thm:envcvx::Lip} can equivalently be cast as
		\(
			\innprod{x_1-x_2}{\eta_1-\eta_2}
		\leq
			\DD*(\eta_1,\eta_2)
		\)
		for all \(\eta_i\in\Y*\) and \(x_i\in\prox_{\lambda f}\circ\nabla\kernel*(\eta_i)\), which is precisely monotonicity of \(\nabla\kernel*-\prox_{\lambda f}\circ\nabla\kernel*\), hence of \(\nabla h_\lambda\).

	\end{itemize}
	To conclude, suppose that \(\kernel\) is 1-coercive.
	\begin{itemize}
	\item ``\ref{thm:envcvx::proxaFNE} \(\Rightarrow\) \ref{thm:envcvx::envcvxC1}''
		Single-valuedness of \(\prox_{\lambda f}\) implies via \cref{thm:hypo} that \(\lambda f+\kernel\) is convex.
		By \cref{thm:extlsc:lsc}, \(\lambda\tilde f+\kernel\) is thus convex and lsc, so that the inclusion in \cref{thm:lsubdiffenv} holds as equality.
		In particular, \(\lsubdiff(-h_\lambda)\) is a continuous single-valued mapping, because so is \(\prox_{\lambda f}\) by \cref{thm:aFNE:C0}, hence \(h_\lambda\) is continuously differentiable and \(\nabla(\lambda h_\lambda)=\nabla\kernel*-\prox_{\lambda f}\circ\nabla\kernel*\).
		By appealing to \cref{thm:aFNE:I-T}, the formula of the gradient and \a-firm nonexpansiveness of \(\prox_{\lambda f}\) imply that \(\nabla(\lambda h_\lambda)\circ\nabla\kernel\) is \a-FNE.
		In particular, \((\nabla h_\lambda\circ\nabla\kernel)\circ\nabla\kernel*=\nabla h_\lambda\) is monotone by virtue of \cref{thm:aFNE:Tmonotone}.

	\item ``\ref{thm:envcvx::Lip} \(\Rightarrow\) \ref{thm:envcvx::envcvx}''
		As discussed earlier, the inequality in assertion \ref{thm:envcvx::Lip} is precisely monotonicity of \(\nabla\kernel*-\prox_{\lambda f}\circ\nabla\kernel*\), and in particular implies monotonicity of \(\lsubdiff h_\lambda\) by virtue of \cref{thm:upperC1}.
		Thus, \(h_\lambda\) is convex by virtue of \cite[Thm. 12.17]{rockafellar1998variational}.
		\qedhere
	\end{itemize}
\end{proof}

The relation between the convexity of \(\env{f}\circ\nabla\kernel*\) and that of \(f\) will be investigated in \cref{sec:dualitygap}.
Here we first derive sufficient conditions under which the obscure convexity of \(\env{f}\circ\nabla\kernel*\) holds.
The following known result will be useful soon; a proof can be found in, e.g., \cite[Thm. 1.14]{themelis2018proximal}.

\begin{fact}\label{thm:hypoprox}%
	Let \(\func{h}{\R^n}{\Rinf}\) be proper, lsc, and suppose that \(h+\sigma_h \j\) is convex for some \(\sigma_h\in\R\).
	Then, \(h\) is \(\j\)-prox-bounded with threshold \(\lambda_h\geq\max\set{0,\sigma_h}^{-1}\) (with the convention \(0^{-1}=\infty\)).
	Moreover, for any \(\lambda>0\) and \(x_i\in\Eprox_{\lambda f}(y_i)\), \(i=1,2\), it holds that
	\[
		\innprod{x_1-x_2}{y_1-y_2}\geq(1-\lambda\sigma_f)\norm{x_1-x_2}^2.
	\]
	In particular, if \(1-\lambda\sigma_f>0\) then the proximal mapping \(\Eprox_{\lambda f}\) is single-valued and \((1-\lambda\sigma_f)^{-1}\)-Lipschitz continuous.
\end{fact}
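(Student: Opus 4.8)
The plan is to reduce everything to the convexity of $g\coloneqq f+\sigma_f\j$ and to the sign of the quadratic coefficient $\mu\coloneqq\tfrac1\lambda-\sigma_f$. First I would settle prox-boundedness. Since $g=f+\sigma_f\j$ is proper, lsc, and convex, it admits an affine minorant $g\geq\innprod{a}{{}\cdot{}}+b$. Writing $f=g-\sigma_f\j$, for every $y$ and $w$ one has
\[
	f(w)+\tfrac{1}{2\lambda}\norm{w-y}^2
	\geq
	\innprod{a}{w}+b+\tfrac{\mu}{2}\norm{w}^2-\tfrac1\lambda\innprod{w}{y}+\tfrac{1}{2\lambda}\norm{y}^2 .
\]
When $\mu>0$ the right-hand side is a coercive quadratic in $w$, hence bounded below, so $\Eenv_{\lambda}f(y)>-\infty$ for every $y$. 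As $\mu>0$ is exactly $\lambda<\max\set{0,\sigma_f}^{-1}$ (with $0^{-1}=\infty$), this yields $\lambda_f\geq\max\set{0,\sigma_f}^{-1}$.

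Next I would prove the inequality, splitting on the sign of $1-\lambda\sigma_f=\lambda\mu$. Fix $\lambda>0$ and $x_i\in\Eprox_{\lambda f}(y_i)$; since the prox set is an attained $\argmin$, properness of $f$ forces $f(x_i)\in\R$, so the elementary interchange argument (test minimality of $x_1$ at $x_2$ and of $x_2$ at $x_1$, then add the two inequalities) always gives $\innprod{x_1-x_2}{y_1-y_2}\geq0$. If $1-\lambda\sigma_f\leq0$ the claimed right-hand side is nonpositive and we are done. When $\mu>0$, complete the square: $f(w)+\tfrac{1}{2\lambda}\norm{w-y_i}^2$ differs by a constant (in $w$) from $g(w)+\tfrac{\mu}{2}\norm{w-z_i}^2$ with $z_i\coloneqq\tfrac{1}{\lambda\mu}y_i$, so $x_i$ minimizes a $\mu$-strongly convex function. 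By Fermat's rule \eqref{eq:Fermat} and the sum rule $\fsubdiff\bigl[g+\tfrac{\mu}{2}\norm{{}\cdot{}-z_i}^2\bigr](x_i)=\fsubdiff g(x_i)+\mu(x_i-z_i)$ (valid as the added term is finite and smooth), we get $\tfrac1\lambda y_i-\mu x_i\in\fsubdiff g(x_i)$, using $\mu z_i=\tfrac1\lambda y_i$. Monotonicity of the convex subdifferential $\fsubdiff g$ then yields
\[
	\tfrac1\lambda\innprod{x_1-x_2}{y_1-y_2}
	\geq
	\mu\norm{x_1-x_2}^2 ,
\]
and multiplying by $\lambda$ gives exactly $\innprod{x_1-x_2}{y_1-y_2}\geq(1-\lambda\sigma_f)\norm{x_1-x_2}^2$.

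Finally, when $1-\lambda\sigma_f>0$ strong convexity of $g+\tfrac{\mu}{2}\norm{{}\cdot{}-z_i}^2$ forces a unique minimizer, so $\Eprox_{\lambda f}$ is single-valued; and applying Cauchy–Schwarz to the inequality just derived gives $\norm{y_1-y_2}\,\norm{x_1-x_2}\geq(1-\lambda\sigma_f)\norm{x_1-x_2}^2$, hence $\norm{x_1-x_2}\leq(1-\lambda\sigma_f)^{-1}\norm{y_1-y_2}$, i.e.\ the claimed Lipschitz continuity. I expect the only real obstacle to be the regime split together with the bookkeeping that guarantees the interchange inequalities are legitimate: the completing-the-square reduction is available only for $\mu>0$, so the degenerate range $1-\lambda\sigma_f\leq0$ must be handled separately (where it is precisely the trivial $\geq0$ bound that suffices), and one must confirm that membership in $\Eprox_{\lambda f}(y_i)$ already forces finiteness of the objective at $x_i$ regardless of whether $\lambda$ lies below the threshold.
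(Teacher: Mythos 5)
Your proof is correct. One caveat about the comparison: the paper does not actually prove this statement---it is labelled a \emph{Fact} and delegated to \cite[Thm. 1.14]{themelis2018proximal}---so there is no in-paper argument to check against; your proposal supplies a self-contained proof, and it follows what is essentially the standard route (the same one used in the cited reference): write \(f=g-\sigma_f\j\) with \(g\) convex, complete the square so that \(x_i\) minimizes the \(\mu\)-strongly convex function \(g+\tfrac{\mu}{2}\norm{{}\cdot{}-z_i}^2\) with \(\mu=\tfrac1\lambda-\sigma_f\), then combine Fermat's rule with monotonicity of \(\fsubdiff g\), handling the degenerate regime \(1-\lambda\sigma_f\leq0\) by the generic monotonicity \(\innprod{x_1-x_2}{y_1-y_2}\geq0\) of proximal points.

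Two points worth tightening. First, the sum rule \(\fsubdiff\bigl[g+\tfrac{\mu}{2}\norm{{}\cdot{}-z_i}^2\bigr]=\fsubdiff g+\mu({}\cdot{}-z_i)\) needs the nontrivial inclusion ``\(\subseteq\)''; it does hold, but the justification is the Moreau--Rockafellar sum rule (the quadratic term has full domain), not merely smoothness of the added term---alternatively you can bypass it entirely by using the strong-minimum inequality \(g(x')+\tfrac{\mu}{2}\norm{x'-z_i}^2\geq g(x_i)+\tfrac{\mu}{2}\norm{x_i-z_i}^2+\tfrac{\mu}{2}\norm{x'-x_i}^2\) at \(x'=x_{3-i}\) and adding the two resulting inequalities. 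Second, the claim that \(\Eprox_{\lambda f}\) is single-valued implicitly includes nonemptiness of \(\Eprox_{\lambda f}(y)\) for every \(y\); this follows because a proper, lsc, \(\mu\)-strongly convex function on \(\R^n\) is coercive and attains its minimum, a sentence your write-up should include alongside the uniqueness assertion.
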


\begin{theorem}\label{thm:envcvx:muL}%
	Suppose that \(\kernel\) is 1-coercive and Legendre, and that \(\nabla\kernel\) is \(L\)-Lipschitz continuous for some \(L>0\) (in particular, \(\dom\kernel=\R^n\)).
	Let \(\lambda>0\) and \(\func{f}{\R^n}{\Rinf}\) be proper, lsc, and such that \(\lambda f+\kernel\) is \(L\)-strongly convex
	(as, for instance, when \(f\) is \(\mu\)-strongly convex and \(\lambda\geq\mu^{-1}L\)).%
	Then, \(\lambda<\pb\) and \(\env{f}\circ\nabla\kernel*\) is convex.
\end{theorem}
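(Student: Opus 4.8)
The plan is to reduce the statement to the Euclidean conjugate representation of the envelope together with the classical duality between strong convexity and smoothness. Throughout one uses that $\dom\kernel=\R^n$ forces $\X=\Y=\R^n$ and makes the canonical extension $\tilde f$ coincide with $f$, while $1$-coercivity of $\kernel$ is equivalent to $\dom\kernel*=\R^n$, so that $\kernel*$ is finite-valued with $\interior\dom\kernel*=\R^n$.

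First I would observe that $f$ is in fact convex. Since $\kernel$ is convex with $L$-Lipschitz gradient, $L\j-\kernel$ is convex; adding the convex function $\lambda f+\kernel-L\j$ (convex by the $L$-strong convexity hypothesis) gives that $\lambda f=(\lambda f+\kernel-L\j)+(L\j-\kernel)$ is convex, hence so is $f$. This yields $\lambda<\pb$ quickly: a proper lsc convex $f$ admits an affine minorant, and since $\kernel$ is $1$-coercive the map $x\mapsto f(x)+\tfrac1{\lambda'}\D(x,y)=[f+\tfrac1{\lambda'}\kernel](x)-\tfrac1{\lambda'}\innprod{\nabla\kernel(y)}{x}+\text{const}$ is $1$-coercive, hence bounded below, for every $\lambda'>0$ and every $y\in\Y$. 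Thus $\env_{\lambda'}{f}(y)>-\infty$ for all $\lambda'>0$, so by \cref{def:PB} the function $f$ is $\kernel$-prox-bounded with threshold $\pb=\infty$, and in particular $\lambda<\pb$.

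For the convexity of $h_\lambda\coloneqq\env{f}\circ\nabla\kernel*$ I would invoke the identity of \cref{thm:envconj}: as $\kernel$ is Legendre and $\interior\dom\kernel*=\R^n$, and using $\tilde f=f$, it reads $\lambda h_\lambda=\kernel*-(\lambda f+\kernel)^*$ on all of $\R^n$. Now the strong-convexity/smoothness duality (e.g.\ \cite{bauschke2017convex}) is applied twice. Since $\nabla\kernel$ is $L$-Lipschitz, $\kernel*$ is $\tfrac1L$-strongly convex, i.e.\ $\kernel*-\tfrac1L\j$ is convex; and since $\lambda f+\kernel$ is $L$-strongly convex, its conjugate $(\lambda f+\kernel)^*$ is $\tfrac1L$-smooth, i.e.\ $\tfrac1L\j-(\lambda f+\kernel)^*$ is convex. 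Summing these two convex functions produces exactly $\kernel*-(\lambda f+\kernel)^*=\lambda h_\lambda$, whence $h_\lambda$ is convex.

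The computations involved — the affine-minorant coercivity estimate and the two duality invocations — are routine. The point that deserves the most care, and which is really the heart of the result, is the exact matching of moduli: the \emph{same} constant $L$ serves both as the Lipschitz modulus of $\nabla\kernel$ (giving $\kernel*$ its $\tfrac1L$-strong convexity) and as the strong-convexity modulus of $\lambda f+\kernel$ (giving $(\lambda f+\kernel)^*$ its $\tfrac1L$-smoothness), so that the two convex pieces $\kernel*-\tfrac1L\j$ and $\tfrac1L\j-(\lambda f+\kernel)^*$ complement each other and their sum loses the quadratic term entirely. A secondary bookkeeping obstacle is ensuring the full-domain hypotheses are used correctly so that the envelope identity of \cref{thm:envconj} holds as a genuine equality on all of $\R^n$ rather than on a proper subdomain; this is exactly where $\dom\kernel=\R^n$ and the $1$-coercivity of $\kernel$ enter.
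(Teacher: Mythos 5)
Your proposal is correct, but it follows a genuinely different route from the paper's. The paper argues at the operator level: it reduces to the Euclidean setting via \cref{thm:Euclidean}, writing \(\prox_{\lambda f}\circ\nabla\kernel*=\Eprox_{\lambda h}\) with \(h=f+\lambda^{-1}(\kernel-\j)\), invokes \cref{thm:hypoprox} with \(\sigma_h=\lambda^{-1}(1-L)\) to conclude that \(\Eprox_{\lambda h}\) is \(L^{-1}\)-Lipschitz, establishes differentiability of \(\Eenv_{\lambda}h\) through an analysis of its Euclidean prox-boundedness threshold (with a case distinction \(L\geq1\) versus \(L<1\)), and finally verifies monotonicity of \(\lambda\nabla(\env{f}\circ\nabla\kernel*)=\nabla\kernel*-\Eprox_{\lambda h}\) by playing the \(L^{-1}\)-strong monotonicity of \(\nabla\kernel*\) against the \(L^{-1}\)-Lipschitz bound on the prox. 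You instead argue at the function/conjugate level: the identity \(\lambda\env{f}\circ\nabla\kernel*=\kernel*-(\lambda f+\kernel)^*\) from \cref{thm:envconj} (valid on all of \(\R^n\) since \(1\)-coercivity gives \(\dom\kernel*=\R^n\)), plus two applications of the classical strong-convexity/Lipschitz-smoothness duality with the \emph{same} modulus \(L\), so that \(\kernel*-\tfrac1L\j\) and \(\tfrac1L\j-(\lambda f+\kernel)^*\) are both convex and sum exactly to \(\lambda\env{f}\circ\nabla\kernel*\). Your route buys several things: it avoids the differentiability and threshold gymnastics entirely (your decomposition \(\lambda f=(\lambda f+\kernel-L\j)+(L\j-\kernel)\) shows \(f\) is convex, whence \(\pb=\infty\), which is stronger than the claimed \(\lambda<\pb\)); it records the nontrivial by-product that the hypotheses force \(f\) itself to be convex; and it makes transparent that the theorem holds precisely because the two moduli match---it is the quantitative, gap-free instance of the duality chain the paper displays at the beginning of \cref{sec:dualitygap}. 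What the paper's operator-theoretic proof buys in exchange is the explicit gradient formula \(\lambda\nabla(\env{f}\circ\nabla\kernel*)=\nabla\kernel*-\Eprox_{\lambda h}\), i.e., differentiability of the envelope composition, which is extra information beyond what the statement asserts.
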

\begin{proof}
	For simplicity, let \(q\coloneqq\lambda^{-1}(\kernel-\j)\) and \(h\coloneqq f+q\).
	For \(\sigma_h\coloneqq\lambda^{-1}(1-L)\),
	\(
		h+\sigma_h\j
	=
		f+q+\lambda^{-1}(1-L)\j
	=
		\lambda^{-1}
		(\lambda f+\kernel-L\j)
	\)
	is convex owing to the assumption.
	Moreover, \cref{thm:Euclidean} implies that \(\Eprox_{\lambda h}=\prox_{\lambda f}\circ\nabla\kernel*\) is well defined with \(\dom\Eprox_{\lambda h}=\dom\prox_{\lambda f}\circ\nabla\kernel*=\dom\nabla\kernel*.\)
	Hence all conditions of \cref{thm:hypoprox} are satisfied, which yields that \(\Eprox_{\lambda h}\) is Lipschitz with modulus \(L^{-1}\).

	Write \(g_{\lambda}\coloneqq\env{f}\circ\nabla\kernel*\).
	Then \cref{thm:Euclidean} implies that
	\(
	\lambda g_{\lambda}=\lambda\Eenv_\lambda h+\kernel*-\j
	\).
	We claim that \(\Eenv_\lambda h\) is differentiable, which implies
	\(
		\lambda\nabla g_{\lambda}
	=
		\lambda\nabla\Eenv_\lambda h+\nabla\kernel*-\id
	=
		\id-\Eprox_{\lambda h}+\nabla\kernel*-\id
	=
		\nabla\kernel*-\Eprox_{\lambda h}
	\),
	where \(\nabla\kernel*\) is \(L^{-1}\)-strongly monotone.
	Indeed, \(h+\lambda^{-1}\j=f+\lambda^{-1}\kernel\) is \(\lambda^{-1}L\)-strongly convex and in particular strictly convex.
	We claim that \(\lambda_h\), the \(\j\)-prox-boundedness threshold of \(h\), satisfies \(\lambda_h>\lambda\), making \cite[Thm. 3.5]{wang2010chebyshev} applicable and thus furnishing differentiability of \(\Eenv_\lambda h\).
	When \(L\geq1\), appealing to \cref{thm:hypoprox} and the convexity of \(h+\sigma_h\j\) yields \(\lambda_h=\infty\).
	When \(L<1\), the function \(h+(\lambda')^{-1}\j\) is strongly convex provided that \(\lambda<\lambda'<\lambda/(1-L)\) and therefore bounded below, justifying that \(\lambda_h\geq\lambda/(1-L)>\lambda\) in view of \cite[Ex. 1.24]{rockafellar1998variational}.

	Altogether, for every \(\eta_i\in\R^n\) and \(x_i=\Eprox_{\lambda h}(\eta_i)\),
	\begin{align*}
	&
		\lambda\innprod{\nabla g_{\lambda} (\eta_1)-\nabla g_{\lambda}(\eta_2)}{\eta_1-\eta_2}
	\\
	={} &
		\innprod{\nabla\kernel*(\eta_1)-\nabla\kernel*(\eta_2)}{\eta_1-\eta_2}
	-
		\innprod{x_1-x_2}{\eta_1-\eta_2}
	\\
	\geq{} &
		L^{-1}\norm{\eta_1-\eta_2}^2-L^{-1}\norm{\eta_1-\eta_2}^2
	=
		0,
	\end{align*}
	justifying the desired convexity of \(g_{\lambda}\).
\end{proof}

A particularly illuminating special case of the above result arises when \(\kernel\) amounts to a quadratic perturbation of a Lipschitz-smooth (i.e., \B_\j-smooth) function.
In that setting, the envelope reduces to the so-called \emph{forward-backward envelope}, and one recovers the convexity claims drawn in \cite{themelis2020new}.

\begin{example}[Forward-backward and DC envelopes]\label{ex:FBE}%
	The problem of minimizing a composite function \(\varphi=f+g\), where \(f\) has an \(L_f\)-Lipschitz continuous gradient and \(g\) is a (possibly) nonsmooth ``prox-friendly'' term, is a classical and extensively studied setting in optimization that continues to attract significant attention.
	In this context, for any stepsize \(\lambda\in(0, L_f^{-1})\), the \emph{forward–backward envelope}
	\[
		\textstyle
		\FB(y)
	\coloneqq
		\inf_x\set{
			f(y)+\innprod{\nabla f(y)}{x-y}+g(x)+\tfrac{1}{2\lambda}\norm{x-y}^2
		}
	\]
	constitutes a regularized objective that shares global and local minimizers with \(\kernel\), thus providing a convenient analytical device both for convergence analysis and for algorithmic design \cite{liu2017further,themelis2018forward,giselsson2018envelope,themelis2019acceleration,latafat2021block}.
	Even when both \(f\) and \(g\) are convex, the function \(\FB\) need not be convex unless \(f\) is quadratic \cite[Prop. 4.4]{giselsson2018envelope}.
	Convexity is nevertheless recovered upon composition with the bi-Lipschitz mapping \(\Eprox_{-\lambda f}\), leading to the so-called \emph{DC envelope} \(\DC\coloneqq\FB\circ\Eprox_{-\lambda f}\) \cite[Lem. 6]{themelis2020new}.

	These observations admit a transparent explanation via \cref{thm:envcvx:muL}.
	Indeed, one may identify \(\FB=\env{\varphi}\) and \(\DC = \env{\varphi}\circ\nabla\kernel^*\) by considering the \(\mu\)-strongly convex and \(L\)-smooth dgf \(\kernel=\j-\lambda f\), with \(\mu=1-\lambda L_f\) and \(L=1\), for which \(\Eprox_{-\lambda f}\) coincides with \(\nabla \kernel^*\).
	In turn, the differentiability of \(\DC\) \cite[Lem. 4(i)]{themelis2020new}, independent of convexity of \(f\) and despite the potential lack thereof for \(\FB\), is consistent with the conclusions of \cref{thm:envcvx}.
\end{example}

\subsection{A duality gap revisited}\label{sec:dualitygap}
In the Euclidean case, the Moreau envelope is convex if and only if so is the underlying function; see \cref{fact:classic}.
Such equivalence hinges on the well-known duality between Lipschitz smoothness and strong convexity.
Indeed, recall that \(\lambda\Eenv_\lambda{f}=\j-(\lambda f+\j)^*\) and therefore
\begin{align*}
	\Eenv_\lambda f\text{ is convex }
& \Leftrightarrow
	\j-(\lambda f+\j)^*\text{ is convex }\\
& \Leftrightarrow
	(\lambda f+\j)^*\text{ is \(1\)-Lipschitz smooth}\\
& \Leftrightarrow
	\lambda f+\j\text{ is \(1\)-strongly convex}\\
& \Leftrightarrow
	f \text{ is convex}.
\end{align*}
However, Laude et al. developed a conjugacy theory in \cite{laude2023dualities} revealing that the classic duality between strong convexity and Lipschitz smoothness is not inherited by their Bregman extensions, respectively being \emph{\B-strong convexity} and \emph{\B*-smoothness} as in \cref{def:Bcvx}.
More precisely, for a Legendre dgf \(\func{\kernel}{\R^n}{\Rinf}\) and a function \(\func{g}{\R^n}{\Rinf}\),
\[
	g\text{ is \B-strongly convex}
\not\Leftrightarrow
	g^*\text{ is \B*-smooth}.
\]
Adopting these jargons, one has\footnote{%
	Strictly speaking, the validity of the first ``\(\Leftarrow\)'' implication presupposes some consistency between the values of the function at boundary points and on the interior of \(\X\); see \cref{rem:Bcvx:lsc}.
}%
\begin{align*}
	f \text{ is convex}
&\Leftrightarrow
	\lambda f+\kernel\text{ is \B-strongly convex}\\
&\not\Leftrightarrow
	(\lambda f+\kernel)^*\text{ is \B*-smooth}\\
&\Leftrightarrow
	\env{f}\circ\nabla\kernel*\text{ is convex}.
\end{align*}
This shows that the gap between the convexity of \(\env{f}\circ\nabla\kernel*\) and that of \(f\) owes exactly to the duality gap between \B-strong convexity and \B*-smoothness.
In \cref{sec:FNE}, we further examined this phenomenon from an operator perspective.
Specifically, we introduced a new notion of \emph{anisotropic} firm nonexpansiveness and traced the discrepancy back to whether the proximal mapping \(\prox_{\lambda f}\) is \emph{Bregman} or \emph{anisotropically} FNE: in summary, when \(\kernel\) is Legendre and 1-coercive,
\begin{align*}
	f \text{ is convex}
&\Leftrightarrow
	\prox_{\lambda f}\text{ is \B-FNE}\\
&\not\Leftrightarrow
	\prox_{\lambda f}\text{ is \a-FNE}\\
&\Leftrightarrow
	\env{f}\circ\nabla\kernel*\text{ is convex}.
\end{align*}
Hence, the gap can also be interpreted as the distinction between non-Euclidean extensions of firm nonexpansiveness: \emph{Bregman} (\B) and \emph{anisotropic} (\a).
We now record examples to show that convexity of \(\env{f}\circ\nabla\kernel*\) and that of the underlying function \(f\) are indeed distinct notions, and thus so are \a- and \B-firm nonexpansiveness; see also \cite[Ex. 1.4]{wang2022bregman} for the failure of the implication ``\(f\) convex \(\Rightarrow\) \(\env{f}\) convex''.

\begin{example}[\(\env{f}\circ\nabla\kernel*\) convex \(\not\Rightarrow\) \(f\) convex]\label{ex:env_not_f}%
	Let $f(x)=\tfrac{1}{4}(x-1)^4-\tfrac{1}{4}x^4$ and $\kernel(x)=\tfrac{1}{4}x^4$.
	Then $\env_1{f}\circ\nabla\kernel*(y)=-y$.
	Clearly $\env_1{f}\circ\nabla\kernel*$ is convex, while $f$ is not.
\end{example}

\begin{example}[\(f\) convex \(\not\Rightarrow\) \(\env{f}\circ\nabla\kernel*\) convex]\label{ex:f_not_env}%
	Let $f(x)=x$ and $\kernel(x)=\tfrac{1}{3}|x|^3$.
	Then $\env_1{f}\circ\nabla\kernel*(y)=\tfrac{2}{3}|y|^{\tfrac{3}{2}}-\tfrac{2}{3}|y-1|^{\tfrac{3}{2}}$.
	Clearly $f$ is convex but $\env_1{f}\circ\nabla\kernel*$ is not.
\end{example}

The conjugate of \B*-smoothness is not captured by strong convexity in the ``Bregman'' sense, but rather by a distinct \emph{``anisotropic''} counterpart \cite[Thm. 4.3]{laude2023dualities}; see also \cite[Prop. 5.11]{laude2025anisotropic} and \cite[Thm. 4.10]{themelis2026natural} for developments beyond full domain assumptions.
We remind that, when \(\kernel\) is Legendre and with full domain, a proper and lsc function \(\func{f}{\R^n}{\Rinf}\) is said to be \emph{anisotropically strongly convex relative to \(\kernel\)}, or \emph{\a-strongly convex}, if for every \((\bar x,\bar v)\in\graph\lsubdiff f\) with \(\bar v\in\interior\dom\kernel*\) it holds that
\[
		f(x)
	\geq
		f(\bar x)
		+
		\kernel(x-\bar x+\nabla\kernel*(\bar v))
		-
		\kernel(\nabla\kernel*(\bar v))
	\quad
		\forall x\in\R^n.
\]
We may thus synopsize the main conditions in both \cref{thm:envcvx,thm:DFNE} as follows:%

\begin{corollary}\label{thm:aBFNE}%
	Suppose that \(\kernel\) is Legendre and 1-coercive, and let \(\func{f}{\R^n}{\Rinf}\) be proper, lsc, and \(\kernel\)-prox-bounded.
	Then, for any \(\lambda\in(0,\pb)\) the following statements are equivalent (and entail validity of \cref{ass:range}):
	\begin{enumerateq}
	\item
		\(\dom f\cap\interior\X\neq\emptyset\), and both \(f\) and \(\env{f}\circ\nabla\kernel*\) are convex.
	\item
		\(
			\DD(x_1,x_2)
		\leq
			\innprod{x_1-x_2}{\nabla\kernel(y_1)-\nabla\kernel(y_2)}
		\leq
			\DD(y_1,y_2)
		\)
		holds for all \(y_i\in\Y\) and \(x_i=\prox_{\lambda f}(y_i)\), \(i=1,2\).
	\item
		\(\prox_{\lambda f}\) is both \B- and \a-FNE.
	\end{enumerateq}
	If in addition \(\dom\kernel=\R^n\), then the following can be added to the equivalence:
	\begin{enumerateq}[resume]
	\item
		\(\lambda f+\kernel\) is both \B- and \a-strongly convex.
	\end{enumerateq}
\end{corollary}

	\section{Conclusion and future work}\label{sec:conclusion}
		The contribution of this paper is twofold.
On one hand, we proposed and investigated systematically the left and right Bregman level proximal subdifferentials, which turn out to be useful tools in variational analysis and optimization.
The (warped) resolvents of these new subdifferentials are always the Bregman proximal operators under a standard range assumption, whereas a similar characterization in terms of classical subdifferentials may fail to hold in the absence of convexity.
Among other results, we characterized the existence of the left Bregman level proximal subdifferential, revealing an interesting connection with a ``pointwise'' Bregman weak convexity.
On the other hand, we established equivalent characterizations of key properties of Bregman proximal operators, such as ``Bregman'' firm nonexpansiveness and convex-valuedness, in terms of the underlying function and its associated Bregman level proximal subdifferential.
We further introduced a complementary ``anisotropic'' notion of firm nonexpansiveness, which allowed us to reinterpret the conjugate duality gap documented in \cite{laude2023dualities} from an operator perspective, as well as in terms of convexity of a function and of its envelope, and to establish a novel characterization of relative smoothness for convex functions under minimal assumptions.

There are many open avenues worth devoting future investigations.
For instance, in the Euclidean setting the level proximal subdifferential plays a central role in characterizing the variational convexity of functions and local nonexpansiveness of proximal operators; see \cite{luo2024level}.
It is thus tempting to see whether similar connections exist in the more general Bregman setting.
Another interesting question is whether a complete picture regarding the relation between the convexity of the Bregman Moreau envelope and that of the underlying function can be derived.

	\paragraph*{Acknowledgments}
		We are grateful to Prof. Xianfu Wang for his insightful comments and suggestions, which improved the quality of this work significantly.
		We also sincerely thank the anonymous referees for their constructive criticism and for bringing to our attention several issues in our original draft.


	\phantomsection
	\addcontentsline{toc}{section}{References}%
	\bibliographystyle{plain}
	\bibliography{TeX/references}

@article{ahookhosh2021bregman,
	author		= {Ahookhosh, Masoud and Themelis, Andreas and Patrinos, Panagiotis},
	title		= {A {B}regman Forward-Backward Linesearch Algorithm for Nonconvex Composite Optimization: Superlinear Convergence to Nonisolated Local Minima},
	year		= {2021},
	journal		= {SIAM Journal on Optimization},
	volume		= {31},
	pages		= {653-685},
	number		= {1},
	doi			= {10.1137/19M1264783},
}

@article{baillon1977quelques,
	author		= {Baillon, Jean-Bernard and Haddad, Georges},
	title		= {Quelques propri{\'e}t{\'e}s des op{\'e}rateurs angle-born{\'e}s et \(n\)-cycliquement monotones},
	year		= {1977},
	journal		= {Israel Journal of Mathematics},
	volume		= {26},
	pages		= {137--150},
	publisher	= {Springer},
	doi			= {10.1007/BF03007664},
}

@article{bauschke1997legendre,
	author		= {Bauschke, Heinz H. and Borwein, Jonathan M.},
	title		= {Legendre functions and the method of random {B}regman projections},
	year		= {1997},
	journal		= {Journal of Convex Analysis},
	volume		= {4},
	pages		= {27--67},
	number		= {1},
}

@article{bauschke2003bregman,
	author		= {Bauschke, Heinz H. and Borwein, Jonathan M. and Combettes, Patrick L.},
	title		= {{B}regman monotone optimization algorithms},
	year		= {2003},
	journal		= {SIAM Journal on control and optimization},
	volume		= {42},
	pages		= {596--636},
	number		= {2},
	publisher	= {SIAM},
	doi			= {10.1137/S0363012902407120},
}

@article{bauschke2003iterating,
	author		= {Bauschke, Heinz H. and Combettes, Patrick L.},
	title		= {Iterating {B}regman Retractions},
	year		= {2003},
	journal		= {SIAM Journal on Optimization},
	volume		= {13},
	pages		= {1159--1173},
	number		= {4},
	doi			= {10.1137/S1052623402410557},
}

@article{bauschke2009bregman,
	author		= {Bauschke, Heinz H. and Wang, Xianfu and Ye, Jane and Yuan, Xiaoming},
	title		= {{B}regman distances and {C}hebyshev sets},
	year		= {2009},
	journal		= {Journal of Approximation Theory},
	volume		= {159},
	pages		= {3-25},
	number		= {1},
	issn		= {0021-9045},
	doi			= {10.1016/j.jat.2008.08.014},
}

@book{bauschke2017convex,
	author		= {Bauschke, Heinz H. and Combettes, Patrick L.},
	title		= {Convex Analysis and Monotone Operator Theory in {H}ilbert Spaces},
	year		= {2017},
	series		= {CMS Books in Mathematics},
	publisher	= {Springer},
	isbn		= {978-3-319-48310-8},
	doi			= {10.1007/978-3-319-48311-5},
}

@article{bauschke2017descent,
	author		= {Bauschke, Heinz H. and Bolte, Jérôme and Teboulle, Marc},
	title		= {A Descent Lemma Beyond {L}ipschitz Gradient Continuity: First-Order Methods Revisited and Applications},
	year		= {2017},
	journal		= {Mathematics of Operations Research},
	volume		= {42},
	pages		= {330--348},
	number		= {2},
	doi			= {10.1287/moor.2016.0817},
}

@article{bauschke2018regularizing,
	author		= {Bauschke, Heinz H. and Dao, Minh N. and Lindstrom, Scott B.},
	title		= {Regularizing with {B}regman--{M}oreau envelopes},
	year		= {2018},
	journal		= {SIAM Journal on Optimization},
	volume		= {28},
	pages		= {3208--3228},
	number		= {4},
	publisher	= {SIAM},
	doi			= {10.1137/17M1130745},
}

@article{beck2003mirror,
	author		= {Beck, Amir and Teboulle, Marc},
	title		= {Mirror descent and nonlinear projected subgradient methods for convex optimization},
	year		= {2003},
	journal		= {Operations Research Letters},
	volume		= {31},
	pages		= {167--175},
	number		= {3},
	publisher	= {Elsevier},
	doi			= {10.1016/S0167-6377(02)00231-6},
}

@article{benoist1996what,
	author		= {Benoist, Jo{\"e}l and Hiriart-Urruty, Jean-Baptiste},
	title		= {What is the subdifferential of the closed convex hull of a function?},
	year		= {1996},
	journal		= {SIAM Journal on Mathematical Analysis},
	volume		= {27},
	pages		= {1661--1679},
	number		= {6},
	publisher	= {SIAM},
	doi			= {10.1137/S0036141094265936},
}

@article{bolte2018first,
	author		= {Bolte, J{\'e}r{\^o}me and Sabach, Shoham and Teboulle, Marc and Vaisbourd, Yakov},
	title		= {First order methods beyond convexity and {L}ipschitz gradient continuity with applications to quadratic inverse problems},
	year		= {2018},
	journal		= {SIAM Journal on Optimization},
	volume		= {28},
	pages		= {2131--2151},
	number		= {3},
	publisher	= {SIAM},
	doi			= {10.1137/17M1138558},
}

@article{borwein2011characterization,
	author		= {Borwein, Jonathan M and Reich, Simeon and Sabach, Shoham},
	title		= {A characterization of {B}regman firmly nonexpansive operators using a new monotonicity concept},
	year		= {2011},
	journal		= {Journal of Nonlinear and Convex Analysis},
	volume		= {12},
	pages		= {161--184},
	number		= {1},
}

@article{bui2020warped,
	author		= {Bui, Minh N. and Combettes, Patrick L.},
	title		= {Warped proximal iterations for monotone inclusions},
	year		= {2020},
	journal		= {Journal of Mathematical Analysis and Applications},
	volume		= {491},
	pages		= {124315},
	number		= {1},
	publisher	= {Elsevier},
	doi			= {10.1016/j.jmaa.2020.124315},
}

@article{chen1993convergence,
	author		= {Chen, Gong and Teboulle, Marc},
	title		= {Convergence Analysis of a Proximal-Like Minimization Algorithm Using {B}regman Functions},
	year		= {1993},
	journal		= {SIAM Journal on Optimization},
	volume		= {3},
	pages		= {538-543},
	number		= {3},
	doi			= {10.1137/0803026},
}

@article{chen2020proximal,
	author		= {Chen, Jiawei and Wang, Xianfu and Planiden, Chayne},
	title		= {A proximal average for prox-bounded functions},
	year		= {2020},
	journal		= {SIAM Journal on Optimization},
	volume		= {30},
	pages		= {1366--1390},
	number		= {2},
	publisher	= {SIAM},
	doi			= {10.1137/19M1287419},
}

@phdthesis{dragomir2021methodes,
	author		= {Dragomir, Radu-Alexandru},
	title		= {M{\'e}thodes de gradient de {B}regman pour probl{\`e}mes {\`a} r{\'e}gularit{\'e} relative},
	year		= {2021},
	school		= {Universit{\'e} de Toulouse},
}

@article{giselsson2018envelope,
	author		= {Giselsson, Pontus and F{\"a}lt, Mattias},
	title		= {Envelope Functions: Unifications and Further Properties},
	journal		= {Journal of Optimization Theory and Applications},
	year		= {2018},
	month		= {Jun},
	day			= {12},
	issn		= {1573-2878},
	doi			= {10.1007/s10957-018-1328-z},
}

@book{hiriarturruty1996convex,
	author		= {Hiriart-Urruty, Jean-Baptiste and Lemar{\'e}chal, Claude},
	title		= {Convex Analysis and Minimization Algorithms II: Advanced Theory and Bundle Methods},
	year		= {1996},
	series		= {Grundlehren der mathematischen Wissenschaften},
	publisher	= {Springer Berlin Heidelberg},
	doi			= {10.1007/978-3-662-06409-2},
}

@article{kan2012moreau,
	author		= {Kan, Chao and Song, Wen},
	title		= {The {M}oreau envelope function and proximal mapping in the sense of the {B}regman distance},
	year		= {2012},
	journal		= {Nonlinear Analysis: Theory, Methods \& Applications},
	volume		= {75},
	pages		= {1385--1399},
	number		= {3},
	publisher	= {Elsevier},
	doi			= {10.1016/J.NA.2011.07.031},
}

@article{latafat2021block,
	author		= {Latafat, Puya and Themelis, Andreas and Patrinos, Panagiotis},
	title		= {Block-coordinate and incremental aggregated proximal gradient methods for nonsmooth nonconvex problems},
	journal		= {Mathematical Programming},
	volume		= {193},
	pages		= {195-224},
	year		= {2021},
	month		= {1},
	doi			= {10.1007/s10107-020-01599-7},
}

@article{laude2020bregman,
	author		= {Laude, Emanuel and Ochs, Peter and Cremers, Daniel},
	title		= {{B}regman proximal mappings and {B}regman--{M}oreau envelopes under relative prox-regularity},
	year		= {2020},
	journal		= {Journal of Optimization Theory and Applications},
	volume		= {184},
	pages		= {724--761},
	number		= {3},
	doi			= {10.1007/s10957-019-01628-2},
}

@article{laude2023dualities,
	author		= {Laude, Emanuel and Themelis, Andreas and Patrinos, Panagiotis},
	title		= {Dualities for non-{E}uclidean smoothness and strong convexity under the light of generalized conjugacy},
	year		= {2023},
	journal		= {SIAM Journal on Optimization},
	volume		= {33},
	pages		= {2721-2749},
	number		= {4},
	doi			= {10.1137/21M1465913},
}

@article{laude2025anisotropic,
	author		= {Laude, Emanuel and Patrinos, Panagiotis},
	title		= {Anisotropic proximal gradient},
	year		= {2025},
	journal		= {Mathematical Programming},
	pages		= {1--45},
	publisher	= {Springer},
	doi			= {10.1007/s10107-025-02204-5},
}

@article{liu2017further,
	author		= {Liu, Tianxiang and Pong, Ting Kei},
	title		= {Further properties of the forward-backward envelope with applications to difference-of-convex programming},
	journal		= {Computational Optimization and Applications},
	year		= {2017},
	month		= {Jul},
	day			= {01},
	volume		= {67},
	number		= {3},
	pages		= {489--520},
	issn		= {1573-2894},
	doi			= {10.1007/s10589-017-9900-2},
}

@article{lu2018relatively,
	author		= {Lu, Haihao and Freund, Robert M. and Nesterov, Yurii},
	title		= {Relatively Smooth Convex Optimization by First-Order Methods, and Applications},
	year		= {2018},
	journal		= {SIAM Journal on Optimization},
	volume		= {28},
	pages		= {333-354},
	number		= {1},
	doi			= {10.1137/16M1099546},
}

@article{luo2024level,
	author		= {Luo, Honglin and Wang, Xianfu and Wang, Ziyuan and Yang, Xinmin},
	title		= {Level proximal subdifferential, variational convexity, and pointwise quadratic approximation},
	year		= {2026},
	journal		= {Mathematical Programming},
	doi			= {10.1007/s10107-026-02355-z},
	note		= {to appear},
}

@article{maddison2021dual,
	author		= {Maddison, Chris J and Paulin, Daniel and Teh, Yee Whye and Doucet, Arnaud},
	title		= {Dual space preconditioning for gradient descent},
	year		= {2021},
	journal		= {SIAM Journal on Optimization},
	volume		= {31},
	pages		= {991--1016},
	number		= {1},
	publisher	= {SIAM},
	doi			= {10.1137/19M130858X},
}

@book{nemirovskij1983problem,
	author		= {Nemirovskij, Arkadij Semenovi{\v{c}} and Yudin, David Borisovich},
	title		= {Problem Complexity and Method Efficiency in Optimization},
	year		= {1983},
	publisher	= {Wiley-Interscience},
	doi			= {10.1137/1027074},
}

@book{rockafellar1970convex,
	author		= {Rockafellar, R. Tyrrell},
	title		= {Convex Analysis},
	year		= {1970},
	publisher	= {Princeton university press},
	doi			= {10.1515/9781400873173},
}

@article{rockafellar1976monotone,
	author		= {Rockafellar, R. Tyrrell},
	title		= {Monotone Operators and the Proximal Point Algorithm},
	year		= {1976},
	journal		= {SIAM Journal on Control and Optimization},
	volume		= {14},
	pages		= {877-898},
	number		= {5},
	doi			= {10.1137/0314056},
}

@book{rockafellar1998variational,
	author		= {Rockafellar, R. Tyrrell and Wets, Roger J.B.},
	title		= {Variational Analysis},
	year		= {1998},
	publisher	= {Springer},
	address		= {New York},
	doi			= {10.1007/978-3-642-02431-3},
}

@article{rockafellar2021characterizing,
	author		= {Rockafellar, R. Tyrrell},
	title		= {Characterizing firm nonexpansiveness of prox mappings both locally and globally},
	year		= {2021},
	journal		= {Journal of Nonlinear and Convex Analysis},
	volume		= {22},
	pages		= {887--899},
	number		= {5},
}

@article{themelis2018forward,
	author		= {Themelis, Andreas and Stella, Lorenzo and Patrinos, Panagiotis},
	title		= {Forward-Backward Envelope for the Sum of Two Nonconvex Functions: Further Properties and Nonmonotone Linesearch Algorithms},
	journal		= {SIAM Journal on Optimization},
	volume		= {28},
	number		= {3},
	pages		= {2274-2303},
	year		= {2018},
	doi			= {10.1137/16M1080240},
}

@phdthesis{themelis2018proximal,
	title		= {Proximal Algorithms for Structured Nonconvex Optimization},
	author		= {Themelis, Andreas},
	school		= {KU Leuven},
	month		= 12,
	year		= {2018},
}

@incollection{themelis2019acceleration,
	author		= {Themelis, Andreas and Ahookhosh, Masoud and Patrinos, Panagiotis},
	editor		= {Bauschke, Heinz H. and Burachik, Regina S. and Luke, D. Russell},
	title		= {On the Acceleration of Forward-Backward Splitting via an Inexact {N}ewton Method},
	booktitle	= {Splitting Algorithms, Modern Operator Theory, and Applications},
	year		= {2019},
	publisher	= {Springer International Publishing},
	address		= {Cham},
	pages		= {363-412},
	isbn		= {978-3-030-25939-6},
	doi			= {10.1007/978-3-030-25939-6_15},
}

@inproceedings{themelis2020new,
	title		= {A new envelope function for nonsmooth {DC} optimization},
	author		= {Themelis, Andreas and Hermans, Ben and Patrinos, Panagiotis},
	booktitle	= {2020 59th IEEE Conference on Decision and Control (CDC)},
	year		= {2020},
	pages		= {4697-4702},
	doi			= {10.1109/CDC42340.2020.9304514}
}

@incollection{themelis2026natural,
	author		= {Themelis, Andreas and Wang, Ziyuan},
	title		= {On the natural domain of {B}regman operators},
	year		= {2026},
	booktitle	= {Strategies for Handling Applications with Nonconvexity: {SHAWN}},
	editor		= {Bauschke, Heinz H. and Bo{\c t}, Radu Ioan and Burachik, Regina S. and Moursi, Walaa},
	address		= {Banff International Research Station, Banff, Canada},
	publisher	= {Springer},
	note		= {to appear},
}

@article{wang2010chebyshev,
	author		= {Wang, Xianfu},
	title		= {On {C}hebyshev Functions and {K}lee Functions},
	year		= {2010},
	journal		= {Journal of Mathematical Analysis and Applications},
	volume		= {368},
	pages		= {293--310},
	number		= {1},
	doi			= {10.1016/j.jmaa.2010.03.041},
}

@article{wang2022bregman,
	author		= {Wang, Xianfu and Bauschke, Heinz H.},
	title		= {The {B}regman proximal average},
	year		= {2022},
	journal		= {SIAM Journal on Optimization},
	volume		= {32},
	pages		= {1379--1401},
	number		= {2},
	publisher	= {SIAM},
	doi			= {10.1137/21M1442474},
}

@article{wang2023every,
	author		= {Wang, Xianfu and Wang, Ziyuan},
	title		= {Every proximal mapping is a resolvent of level proximal subdifferential},
	year		= {2023},
	journal		= {Optimization Letters},
	pages		= {1--16},
	publisher	= {Springer},
	doi			= {10.1007/s11590-023-02036-2},
}

@article{wang2024mirror,
	author		= {Wang, Ziyuan and Themelis, Andreas and Ou, Hongjia and Wang, Xianfu},
	title		= {A mirror inertial forward-reflected-backward splitting: Convergence analysis beyond convexity and {L}ipschitz smoothness},
	year		= {2024},
	journal		= {Journal of Optimization Theory and Applications},
	volume		= {},
	pages		= {},
	doi			= {10.1007/s10957-024-02383-9},
}

\end{document}